\tikzstyle{vertex}=[circle, draw, inner sep=0pt, minimum size=6pt]
\newcommand{\vertex}{\node[vertex]}
\renewcommand{\title}[1]{\textbf{#1}}
\renewcommand{\line}{\begin{tabularx}{\textwidth}{X>{\raggedleft}X}\hline\\\end{tabularx}\\[-0.5cm]}
\newcommand{\Aut}[1]{\operatorname{Aut}(#1)}
\newcommand{\Fit}[1]{\operatorname{Fit}(#1)}
\newcommand{\Syl}[2]{\operatorname{Syl}_{#1}(#2)}
\newcommand{\II}{{\mathcal I}}
\newcommand{\DD}{{\mathcal D}}
\newcommand{\OO}{{\mathcal O}}
\newcommand{\CC}{{\mathcal C}}
\newcommand{\NN}{{\mathcal N}}
\def\imod#1{\allowbreak\mkern10mu({\operator@font mod}\,\,#1)}
\newcommand{\superscript}[1]{\ensuremath{^{\textrm{#1}}}}
\theoremstyle{plain}
\newtheorem{theorem}{Theorem}
\newtheorem{lemma}[theorem]{Lemma}
\newtheorem{corollary}[theorem]{Corollary}
\newtheorem{theoremN}{Theorem}[section]
\newtheorem{propositionN}[theoremN]{Proposition}
\newtheorem{corollaryN}[theoremN]{Corollary}
\newtheorem{lemmaN}[theoremN]{Lemma}
\theoremstyle{definition}
\newtheorem*{definition}{Definition}
\theoremstyle{remark}
\newtheorem*{remark}{Remark}
\newcommand{\tightoverset}[2]{%
  \mathop{#2}\limits^{\vbox to -.5ex{\kern-0.75ex\hbox{$#1$}\vss}}}
\newcommand{\tikzoverset}[2]{%
  \tikz[baseline=(X.base),inner sep=0pt,outer sep=0pt]{%
    \node[inner sep=0pt,outer sep=0pt] (X) {$#2$};
    \node[yshift=1pt] at (X.north) {$#1$};
}}
\renewcommand\vec[1]{\tikzoverset{\rightharpoonup}{#1}}
\newcommand\PrG[1]{\Gamma_{\!#1}}
\newcommand\PG{\PrG{G}}
\newcommand\FrG[1]{\vec{\Gamma_{\!#1}}}
\newcommand\FG{\FrG{G}}
\begin{document}
\date{ }
\setcounter{page}{1}

\vspace*{\fill}
\title{\Large \begin{center}{\huge A Characterization of the Prime Graphs\\ of Solvable Groups. }\end{center} }

\noindent\line

\begin{center}{\Large Alexander Gruber\superscript{a}, Thomas M. Keller\superscript{b}, Mark Lewis\superscript{c},\\ Keeley Naughton\superscript{d}, Benjamin Strasser\superscript{e}}\end{center}
\begin{center}
\begin{tabular}{lc}
\superscript{a} & Department of Mathematical Sciences, University of Cincinnati\\& 2815 Commons Way, Cincinnati, OH 45221-0025, USA \\
\superscript{b} & Department of Mathematics, Texas State University\\& 601 University Drive, San Marcos, TX 78666-4616, USA \\
\superscript{c} & Department of Mathematical Sciences, Kent State University\\& Kent, OH 44242, USA\\
\superscript{d} &  Mathematics Department, Syracuse University\\&  805 South Crouse Avenue, Syracuse, NY 13244-1150, USA \\
\superscript{e} & Department of Mathematics, Carleton College\\& 1 North College Street, Northfield, MN 55057-4001, USA
\end{tabular}
\end{center}
\vspace*{\fill}

\thispagestyle{empty}
\pagestyle{fancy}
\rhead{\footnotesize {\emph{Preprint. May 2013.}} }
\pagebreak
\setcounter{page}{1}

\begin{abstract}
Let $\pi(G)$ denote the set of prime divisors of the order of a finite group $G$.  The \emph{prime graph} of $G$, denoted $\PG$, is the graph with vertex set $\pi(G)$ with edges $\{p,q\}\in E(\PG)$ if and only if there exists an element of order $pq$ in $G$.  In this paper, we prove that a graph is isomorphic to the prime graph of a solvable group if and only if its complement is $3$-colorable and triangle free.  We then introduce the idea of a minimal prime graph.  We prove that there exists an infinite class of solvable groups whose prime graphs are minimal.  We prove the $3k$-conjecture on prime divisors in element orders for solvable groups with minimal prime graphs, and we show that solvable groups whose prime graphs are minimal have Fitting length $3$ or $4$.
\end{abstract}

\section{Introduction.}
				
Prime graphs originated in the 1970s as a by-product of certain cohomological questions posed by K.W. Gruenberg.  Shortly after their introduction, prime graphs became objects of interest in their own right, and since then numerous contributions have been made to the topic.  The prime graphs of finite simple groups are well understood (see \cite{Vasilev}, \cite{Kondratev}, and \cite{Williams}); as is the structure of groups with acyclic prime graphs (see \cite{LucidoTree}).  Graph invariants such as diameter \cite{LucidoDiameter} and degree sequence \cite{Moghaddamfar} have also been extensively documented.  The question of how graph theoretic properties influence group structure remains, for the most part, open, and it is from this angle that our investigation proceeds.

Solvable groups possess several properties that motivate an extended discussion of their prime graphs.  Philip Hall established in \cite{Hall} that $G$ is solvable if and only if $G$ contains a Hall $\pi$-subgroup for every subset $\pi\subset \pi(G)$.  Graph theoretically, we can interpret this as the statement that, whenever $G$ is solvable, every induced subgraph $\PG[\pi]$ is the prime graph of a Hall $\pi$-subgroup of $G$.

Of further use is the following proposition, which we refer to as \emph{Lucido's Three Primes Lemma}.

\begin{lemma}[Lucido's Three Primes Lemma, \cite{LucidoDiameter}]
Let $G$ be a finite solvable group.  If $p,q,r$ are distinct primes dividing $|G|$, then $G$ contains an element of order the product of two of these three primes.
\end{lemma}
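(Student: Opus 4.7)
The plan is to argue by contradiction. Assume $G$ has no element of order $pq$, $pr$, or $qr$. Since $G$ is solvable, Hall's theorem produces a Hall $\{p,q,r\}$-subgroup, which is also a counterexample, so I may reduce to the case $\pi(G) = \{p,q,r\}$. I would then examine the Fitting subgroup $F = \Fit{G}$. If $F$ were divisible by two distinct primes $p_1, p_2$, the corresponding Sylow subgroups of $F$ would be characteristic in $F$ and hence normal in $G$; being of coprime orders they would intersect trivially and commute elementwise, so the product of two nontrivial prime-order elements from each would have order $p_1 p_2$, contradicting our assumption. Therefore $F$ is a $t$-group for a single prime $t$, which after relabeling I take to be $p$.

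Next I would show that a Hall $\{q,r\}$-subgroup $H$ of $G$ acts fixed-point-freely on $F$ by conjugation. If some element $h \in H$ of prime order $q$ centralized a nontrivial $x \in F$, then since $F$ is a $p$-group I could replace $x$ by a suitable power of order $p$, and this commuting pair would produce an element of order $pq$, a contradiction; the case of order $r$ is analogous. Extending this to all of $H$ uses the commuting decomposition of an arbitrary element into its $q$- and $r$-parts: any nontrivial fixed point of the whole element yields one for an appropriate prime-power part, and hence for a prime-order element after raising to a further power. Consequently $FH$ is a Frobenius group with kernel $F$ and complement $H$.

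The contradiction will come from combining this with the structure of Frobenius complements. By Zassenhaus's classification, every Sylow subgroup of a Frobenius complement is either cyclic or (at the prime $2$) generalized quaternion, and every subgroup of order the product of two distinct primes is cyclic. If both Sylow subgroups of $H$ are cyclic, then $H$ is a $Z$-group whose Sylow for the larger prime is normal; its unique minimal subgroup is then normal in $H$, and multiplying it by an element of the other prime order produces a subgroup of order $qr$ that is forced to be cyclic. If instead the Sylow $2$-subgroup of $H$ is generalized quaternion, its unique involution is the intersection of all Sylow $2$-subgroups and hence central in $H$, so together with any element of the odd prime order it generates a cyclic subgroup of order $2r$. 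In either case $H$, and therefore $G$, contains an element of order $qr$, contradicting the hypothesis. The main obstacle is this last step: the structure theory of Frobenius complements must be invoked carefully, especially in the generalized quaternion case, to rule out any exotic $\{q,r\}$-group acting fixed-point-freely on $F$ that somehow avoids an element of order $qr$.
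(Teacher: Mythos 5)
The paper does not prove this lemma at all; it is quoted from Lucido \cite{LucidoDiameter}, so there is no internal proof to compare against. Your argument is correct and runs along the standard lines: reduce via Hall's theorem to a Hall $\{p,q,r\}$-subgroup, note that $\Fit{G}$ must be a $t$-group for a single $t$ (otherwise two commuting characteristic Sylow subgroups of $\Fit{G}$ already give the desired element), deduce that a Hall $\{q,r\}$-subgroup $H$ acts fixed point freely on $\Fit{G}$ --- your passage from prime-order elements to arbitrary elements via the commuting $q$- and $r$-parts is exactly right --- and then extract an element of order $qr$ from the Frobenius complement $H$. This is essentially Lucido's route; it can also be finished more quickly with machinery already used in this paper, since a totally disconnected prime graph on $\{p,q,r\}$ would give a solvable group three prime graph components, contradicting the Frobenius/$2$-Frobenius structure theorem of Williams \cite{Williams}. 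Two steps in your last paragraph are true but need better justification. First, in the cyclic-Sylow case, normality of the Sylow subgroup for the larger prime is not automatic from cyclicity; it follows from Burnside's normal $p$-complement theorem applied to the smaller prime, and that should be cited. Second, the justification ``the unique involution is the intersection of all Sylow $2$-subgroups'' does not work as stated, since a priori distinct Sylow $2$-subgroups could have distinct unique involutions; the clean argument is that an involution acting fixed point freely on $\Fit{G}$ inverts it, so two distinct involutions would have a nontrivial product acting trivially, whence $H$ has exactly one involution, which is then central. With these two points repaired, and the classical fact (Zassenhaus/Burnside) that every subgroup of a Frobenius complement of order a product of two primes is cyclic, your proof is complete.
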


\noindent Equivalently, if $G$ is solvable, then $\PG$ cannot contain an independent set of size $3$.  In other words, $\overline\PG$ must be triangle free.

Williams observed in \cite{Williams} that every solvable group with a disconnected prime graph must be either a Frobenius or $2$-Frobenius group.  It follows that whenever an edge $pq$ is missing from the prime graph of a solvable group, the corresponding Hall $\{p,q\}$-subgroup $H_{pq}$ admits a fixed point free action between either the Sylow subgroups of $H_{pq}$ or their image in its Fitting quotient.  Our characterization begins by defining an acyclic orientation of $\overline{\PG}$ indicating the direction of this action for every edge $pq \in \overline{\PG}$.  We refer to this as the \emph{Frobenius digraph} of $G$.

The Frobenius digraph affords us a powerful tool for understanding the prime graphs of solvable groups.  The primary result of this paper can be summarized by the following theorem, though in fact both implications are derived from stronger results.

\begin{theorem}
A graph $F$ is the prime graph of some solvable group if and only if its complement $\overline{F}$ is $3$-colorable and triangle free.
\end{theorem}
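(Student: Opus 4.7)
The plan is to prove the two implications of the biconditional separately, using the Frobenius digraph $\FG$ defined in the introduction as the principal bridge between the group-theoretic and combinatorial sides.

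For the forward direction, let $G$ be solvable and set $F = \PG$. Triangle-freeness of $\overline{F}$ is exactly Lucido's Three Primes Lemma recalled above. To show $\overline{F}$ is 3-colorable, I would exploit the fact that $\FG$ is an acyclic orientation of $\overline{F}$ and invoke the Gallai--Hasse--Roy--Vitaver theorem: the chromatic number of a graph equals the minimum, over acyclic orientations, of the number of vertices in a longest directed path. It therefore suffices to show that $\FG$ admits no directed path $p_1 \to p_2 \to p_3 \to p_4$ on four vertices. Such a chain would correspond to four nested Frobenius-type actions inside the (solvable) Hall $\{p_1,p_2,p_3,p_4\}$-subgroup $H$; by applying Williams' theorem on disconnected prime graphs to the Hall $\{p_i,p_{i+1}\}$-subgroups of $H$ and tracking the Fitting series of $H$, one should force an additional edge in $\PG$ that contradicts the hypothesis that each consecutive pair is a non-edge with its prescribed Frobenius orientation.

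For the backward direction, suppose $\overline{F}$ is triangle-free and 3-colorable, and fix a proper 3-coloring with independent color classes $A \sqcup B \sqcup C = V(F)$; each of $A, B, C$ is a clique in $F$. I would construct a solvable group $G$ with $\PG = F$ as a tower $V \rtimes Q \rtimes R$ of three layers matching the coloring. Build $V = \bigoplus_{p \in A} V_p$ as a direct product of elementary abelian $p$-groups. Choose cyclic $q$-groups $Q_q$ for each $q \in B$ forming an abelian $Q = \prod_{q\in B} Q_q$, and let $Q$ act on $V$ via modules $V_p$ chosen so that the $Q_q$-action on $V_p$ is fixed-point-free precisely when $pq \in E(\overline{F})$ and has a trivial summand otherwise; such modules exist by elementary representation theory of cyclic groups in coprime characteristic, combined with Dirichlet's theorem to supply primes yielding the required roots of unity. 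Repeat the procedure to attach a layer $R$ built from the primes in $C$ acting on $V \rtimes Q$ with the analogous fixed-point/fpf pattern dictated by the color-$C$ edges of $\overline{F}$. Intra-class edges of $F$ are realized automatically by the abelian structure within each layer, while inter-class edges are realized exactly when the prescribed action has a nonzero fixed subspace, which by construction is precisely on $E(F)$. Triangle-freeness is essential here: if $\overline{F}$ contained a triangle $\{p,q,r\}$, then Lucido's Lemma applied to the Hall $\{p,q,r\}$-subgroup would force an unwanted element of mixed order in the three-layer semidirect product, obstructing the construction.

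The main obstacle will be the forward direction: bounding the length of a directed path in $\FG$ is not just a local edge-by-edge check but requires the structure theory of solvable Frobenius and 2-Frobenius groups applied simultaneously to several overlapping Hall subgroups, with careful bookkeeping to convert nested fpf actions into a contradictory edge of $\PG$. The backward direction, by contrast, is largely mechanical once the 3-coloring is in hand, the only subtlety being the simultaneous realization of all the fpf and fixed-point conditions by a single compatible system of modules.
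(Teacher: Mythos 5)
Your skeleton is the same as the paper's: triangle-freeness of $\overline{F}$ from Lucido's Three Primes Lemma, $3$-colorability via Gallai--Roy applied to the Frobenius digraph once one knows $\FG$ has no directed $3$-path, and for the converse a layered semidirect product matching the three color classes. But the two steps you defer are exactly where the content lies, and your sketches of them would not go through as written. In the forward direction, the absence of a directed path $p_1\rightarrow p_2\rightarrow p_3\rightarrow p_4$ is not obtained by ``forcing an additional edge in $\PG$.'' The paper's mechanism is structural: first one shows that a directed $2$-path $r\rightarrow q\rightarrow p$ forces a Hall $\{p,q,r\}$-subgroup to be a $2$-Frobenius group of type $(p,q,r)$ (this already requires ruling out the Frobenius possibility and the possibility that the middle prime occupies the top and bottom of the Fitting series), and then that in such a group the Sylow subgroup for the head prime is not cyclic while the Sylow subgroup for the middle prime is cyclic. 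Applying this to the two overlapping $2$-paths inside a $3$-path makes the Sylow subgroup at an interior vertex simultaneously cyclic and non-cyclic, which is the contradiction; nothing in your outline produces this dichotomy, and it is unclear how ``tracking the Fitting series'' alone would manufacture a new edge of $\PG$.

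In the backward direction, the issue you dismiss as ``the only subtlety'' is the crux, and elementary representation theory of cyclic groups does not settle it. The group acting on a bottom-layer module $V_p$ is in general not a product of independently acting cyclic groups: if $r$ in the top class acts fixed point freely on $Q_q$ and $q$ acts fixed point freely on $V_p$, then the acting group is a Frobenius group $Q_q\rtimes R_r$, and you cannot prescribe the fixed-point behavior of $R_r$ on $V_p$ independently of the $Q_q$-action. Triangle-freeness rescues the construction, but by a different mechanism than the one you invoke: it guarantees that whenever $rq$ and $qp$ are both edges of $\overline{F}$, the pair $rp$ must be an edge of $F$, and the needed fixed points of $R_r$ on $V_p$ then come for free from the coprime-action fact that when the kernel of a Frobenius group acts fixed point freely on a module, the complement has nontrivial fixed points. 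What must actually be supplied is a faithful irreducible module for the entire acting Hall subgroup on which its (cyclic) Fitting subgroup acts fixed point freely; the paper imports this from Keller's Lemma 1.8 after choosing the characteristic by Dirichlet, and without such an ingredient your ``compatible system of modules'' is asserted rather than constructed.
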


Our characterization has some interesting applications.  For example, we find that the girth of the prime graph of a solvable group is exactly equal to three with only a few exceptions, which we then classify.

\begin{corollary}\label{cor1}
The prime graph of any solvable group has girth $3$ aside from the following exceptions: the $4$-cycle, the $5$-cycle, and the $7$ unique forests that do not contain an independent set of size $3$.
\end{corollary}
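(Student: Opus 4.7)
The plan is to combine the main theorem of the paper with Ramsey's number $R(3,3)=6$. The corollary is essentially a finite classification: for a prime graph $F$ of a solvable group to have girth different from $3$, the graph $F$ must live on a very small vertex set, and I enumerate all such $F$ explicitly.

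First I would observe that if $F$ is the prime graph of a solvable group and has girth $\neq 3$, then $F$ contains no triangle. Combining this with the theorem, the complement $\overline{F}$ is also triangle-free, i.e.\ $F$ contains no independent set of size $3$. Since $R(3,3)=6$, these two conditions together force $|V(F)|\le 5$.

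Next I would enumerate all graphs on at most five vertices such that both $F$ and $\overline F$ are triangle-free. On five vertices, the unique such graph is $C_5$, the classical extremal example realizing the Ramsey bound. On four vertices, the possibilities are $C_4$, $P_4$, and $2K_2$; on three vertices, $P_3$ and $K_2\cup K_1$; on two, $K_2$ and $2K_1$; on one, $K_1$. Of these nine graphs, only $C_4$ and $C_5$ contain a cycle, and the remaining seven are precisely the forests claimed in the statement.

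To finish, I would confirm that each of the nine candidates is actually realized as the prime graph of some solvable group by invoking the theorem in the other direction: every complement on the list is a triangle-free graph on at most five vertices, hence either bipartite or isomorphic to $C_5$, and so $3$-colorable in every case. The argument has no serious obstacle, since the theorem does all the heavy lifting; the step requiring the most care is the small-vertex enumeration, where one must verify that near-misses such as $K_{1,3}$, $P_3\cup K_1$, $P_5$, and $K_{2,3}$ all fail the independence condition and are rightly excluded.
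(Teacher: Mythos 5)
Your proof is correct, and it reaches the classification by a genuinely different route than the paper. You use the Ramsey bound $R(3,3)=6$: since a triangle-free prime graph of a solvable group also has no independent set of size $3$ (Lucido's Three Primes Lemma, equivalently the triangle-freeness of the complement), any exception lives on at most $5$ vertices, and the problem reduces to the classical enumeration of graphs $F$ with both $F$ and $\overline{F}$ triangle-free on at most five vertices ($C_5$ alone on five vertices; $C_4$, $P_4$, $2K_2$ on four; and so on), yielding exactly the two cycles and seven forests; realizability then follows from Theorem \ref{PartialConverse} since a triangle-free graph on at most five vertices is bipartite or $C_5$, hence $3$-colorable. The paper instead bounds the independence number of a putative large exception: for a connected non-cycle triangle-free graph on $n\geq 5$ vertices it invokes Brooks' theorem to get $\alpha\geq n/\chi\geq n/\Delta>\sqrt{n}>2$, then treats disconnected graphs on at least five vertices and long cycles separately, and finally exhausts the forests on at most four vertices. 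Your argument buys a cleaner, single-step size bound that absorbs the paper's connected/disconnected/cycle case split into one finite enumeration, at the cost of having to carry out (and trust) that enumeration carefully; the paper's argument avoids Ramsey theory but needs Brooks' theorem and more case analysis. Both proofs lean on the same two pillars, Lucido's lemma for exclusion and Theorem \ref{PartialConverse} for realizability, so the difference is purely in the combinatorial middle step, where your version is arguably the more economical.
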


In the remaining sections of the paper, we present an extended application of how properties of solvable groups may be derived from their prime graphs using the Frobenius digraph and our main theorem.  We introduce the class of prime graphs that are minimal with respect to the property that they are isomorphic to the prime graph of some solvable group.  We are able to show that any solvable group of order $n$ with a minimal prime graph contains an element whose order is divisible by at least one third of the primes dividing $n$.  As our final result, we show that any solvable group with a minimal prime graph has Fitting length of $3$ or $4$, after which we present examples of solvable groups having minimal prime graphs and both possible Fitting lengths.  This final result is reminiscent of Lucido's work on solvable groups with prime graphs of diameter $3$ in \cite{LucidoDiameter}, which also have Fitting length $3$ or $4$. If $G$ is a solvable group, we denote the Fitting length of $G$ by $\ell_F (G)$.

\begin{theorem}
Let $G$ be a solvable group with a minimal prime graph.  Then $3 \leq \ell_F(G) \leq 4$.
\end{theorem}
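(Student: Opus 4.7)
The plan is to establish $\ell_F(G) \geq 3$ and $\ell_F(G) \leq 4$ separately, in each case arguing by contradiction: if the Fitting length violates the bound, I produce an edge of $\PG$ whose removal leaves a graph still realizable as the prime graph of some solvable group, contradicting minimality. Both arguments lean on the reformulation of minimality provided by the main theorem, namely that an edge $pq \in E(\PG)$ is essential precisely when adding $pq$ to $\overline{\PG}$ creates a triangle or destroys $3$-colorability, i.e.\ when some prime $r$ is nonadjacent in $\PG$ to both $p$ and $q$, or no $3$-coloring of $\overline{\PG} \cup \{pq\}$ exists.

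For the lower bound, first consider $\ell_F(G) = 1$. Then $G$ is nilpotent and $\PG$ is the complete graph on $\pi(G)$; provided $|\pi(G)| \geq 2$ (which is implicit in the nontriviality of a minimal prime graph), removing any single edge yields a graph whose complement is a single edge, trivially triangle-free and $3$-colorable, hence realizable. Next consider $\ell_F(G) = 2$ and let $F = \Fit{G}$. Then $\pi(F)$ induces a clique in $\PG$ since $F$ is nilpotent, and moreover every arrow of $\FG$ must have its head on $\pi(F)$ and its tail on $\pi(G/F) \setminus \pi(F)$, giving $\FG$ a bipartite-like orientation. I would carry out a case analysis on edges $pq$ with both $p,q \in \pi(F)$: any prime $r$ witnessing essentiality must lie in $\pi(G/F)\setminus\pi(F)$ and be the tail of arrows into both $p$ and $q$. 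Using the bipartite structure, I would show that no such configuration can occur without already permitting a smaller realization of $\PG - pq$ (for example, replacing the relevant Hall $\{p,q,r\}$-subgroup with a $2$-Frobenius construction in which $p$ and $q$ no longer commute). Hence $\PG$ is not minimal.

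For the upper bound, assume $\ell_F(G) \geq 5$ and consider the ascending Fitting series $1 = F_0 \lhd F_1 \lhd \cdots \lhd F_5 \leq G$. Since in a solvable group each Fitting quotient acts faithfully on the previous layer, I would select primes $p_i$ with $p_i \mid |F_i/F_{i-1}|$ such that each Sylow $p_{i+1}$-subgroup acts Frobenius-like on a Sylow $p_i$-subgroup below. This produces a directed path $p_5 \to p_4 \to p_3 \to p_2 \to p_1$ of length $4$ in $\FG$, hence five consecutive non-edges of $\PG$. Triangle-freeness of $\overline{\PG}$ then forces the edges $p_1p_3$, $p_1p_4$, $p_1p_5$, $p_2p_4$, $p_2p_5$, $p_3p_5$ into $\PG$. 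I would argue that at least one such edge---say $p_1p_5$---is nonessential by constructing a solvable group $G'$ of Fitting length $\leq 4$ realizing $\PG - p_1p_5$: the idea is to collapse two adjacent layers of the Fitting series of $G$ into a single semidirect-product layer, rerouting the Frobenius actions so that the corresponding arrow in $\FG$ disappears while every other arrow and every other edge of $\PG$ is preserved.

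The main obstacle I anticipate is this upper bound reduction. Extracting the length-$4$ Frobenius chain from a length-$5$ Fitting tower is standard, and triangle-freeness then pins down which edges of $\PG$ are forced; but verifying that one of these forced edges can always be eliminated by a length-$\leq 4$ reconstruction requires a detailed orientation-by-orientation analysis of the possibilities in $\FG$, and checking that the reconstruction does not inadvertently destroy $3$-colorability of the complement. The lower bound, by contrast, is essentially bookkeeping once the bipartite picture of $\FG$ for Fitting length $2$ is exploited.
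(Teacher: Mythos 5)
Your upper bound argument rests on a false implication. From $\ell_F(G)\geq 5$ you claim to extract a directed $4$-path $p_5\to p_4\to p_3\to p_2\to p_1$ in $\FG$ by choosing primes in successive Fitting quotients; but faithful action of $F_{i+1}/F_i$ on the layer below does not give fixed-point-free action on Sylow subgroups, and an edge $p_ip_{i+1}$ of $\FG$ exists only when the Hall $\{p_i,p_{i+1}\}$-subgroup is Frobenius or $2$-Frobenius, i.e.\ only when $p_ip_{i+1}\notin\PG$ -- which large Fitting length in no way forces (a $\{p,q\}$-group can have arbitrarily large Fitting length while $\PG$ is a single edge). Indeed, the paper's Corollary 2.8 shows the Frobenius digraph of \emph{any} solvable group contains no directed $3$-path, so if your implication were valid it would bound the Fitting length of every solvable group, which is absurd. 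The subsequent step is also not a proof: to contradict minimality you only need $\PG - p_1p_5$ to satisfy the graph-theoretic criterion of the main theorem, and your proposed ``collapse two Fitting layers and reroute the Frobenius actions'' construction is never made concrete and does not address triangle-freeness or $3$-colorability of the new complement. The actual proof of $\ell_F(G)\leq 4$ is a structural group-theoretic argument, not graph surgery: minimality forces $H_\Pi\leqslant\Fit{G}$ (Proposition 3.6), detailed control of the classes $\OO$, $\DD$, $\II$ (Lemmas 5.1--5.3) produces a nilpotent normal subgroup $X$ with every Sylow subgroup of $G/X$ cyclic or generalized quaternion, and then the fact that automorphism groups of such groups have Fitting length at most $2$ yields the bound (Theorem 5.4), with the binary octahedral group $2O$ appearing exactly when $\ell_F(G)=4$.

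Your lower bound is closer in spirit but incomplete: the case $\ell_F(G)=1$ is fine, but for $\ell_F(G)=2$ you only sketch a case analysis (``I would show that no such configuration can occur'') without carrying it out. The clean route, which is essentially the paper's, is direct rather than by graph modification: minimality forces $\overline{\PG}$ to be non-bipartite (Lemma 3.3), hence $\FG$ contains a directed $2$-path $r\to q\to p$; by Lemma 2.6 the Hall $\{p,q,r\}$-subgroup is then $2$-Frobenius of type $(p,q,r)$, which has Fitting length $3$, and Fitting length is monotone under subgroups, so $\ell_F(G)\geq 3$. Note this also disposes of your $\ell_F(G)=2$ case immediately, since a group of Fitting length $2$ cannot contain a Hall subgroup of Fitting length $3$.
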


Before we begin, let us briefly introduce the notational conventions to be used throughout this paper.  Unless stated otherwise, all graphs will be assumed simple and all groups finite.  In an undirected graph $\Gamma$, we denote edges $\{p,q\}\in E(\Gamma)$ by $pq\in \Gamma$.  For edges in a directed graph $\vec{\Gamma}$, we write edges from $p$ to $q$ as either $pq\in \vec{\Gamma}$ or $p\rightarrow q$ when unclear.  Subgraphs of a graph $F$ induced by a subset $\pi\subseteq V(F)$ will be denoted $F [\pi]$.  When we refer to cycles or paths in a directed graph, it is implicitly assumed that these cycles and paths are directed.  We refer to paths on $n+1$ vertices with $n$ edges as $n$-paths.

In an undirected graph $\Gamma$, the \emph{$k$-neighborhood} of a vertex $v\in \Gamma$, which we will denote $N^k(v)$, is defined as the set of vertices $u\in \Gamma$ such that a path in $\Gamma$ exists connecting $u$ and $v$ and the shortest such path has length $k$.  In a directed graph $\vec{\Gamma}$, a distinction is made between $k$-in- and $k$-out-neighborhoods of $v$.  The $k$-in-neighborhood of $v$ is denoted $N_\uparrow^k(v)$ and consists of vertices $u\in \vec{\Gamma}$ for which there exists a directed path in $\Gamma$ beginning with $u$ and ending with $v$ and such that the shortest such path has length $k$.  The $k$-out-neighborhood of $v$, denoted $N_\downarrow^k(v)$ is defined similarly, but instead for paths beginning with $v$ and ending in $u$.

We will sometimes write subgroups in the Fitting series of $G$ by $F_k(G)$ (or $F_k$ when there is no ambiguity), so that $F_1 (G) = \Fit{G}$, $F_2/F_1 = \Fit {G/F_1}$, and so on.  For a prime $p$ dividing $|G|$, unless stated otherwise, we denote by $P$ an arbitrary Sylow $p$-subgroup of $G$, and all statements about $P$ apply to every Sylow $p$-subgroup of $G$.  Finally, unless stated otherwise, for a set $\pi$ of primes dividing $|G|$, we denote by $H_\pi$ a Hall $\pi$-subgroup of $G$, unless $\pi$ consists of only two (or three, resp.) primes $p$ and $q$ (and $r$), in which case we write $H_{pq}$ (or $H_{pqr}$).

\section{Characterization.}

In this section, we characterize the prime graphs of solvable groups.

We would like to determine as much as possible about the way that Sylow subgroups of a solvable group $G$ interact given information from $\PG$.
 We know from \cite[Thm.~A]{Williams} that any solvable group with a disconnected prime graph is Frobenius or $2$-Frobenius.  Therefore, we can pick out disconnected subgraphs of $\PG$ to find Hall subgroups that contain fixed point free action.  We begin by defining $2$-Frobenius groups, introducing some new terminology, and providing additional details regarding their structure.

\begin{definition}
A group $G$ is a \emph{$2$-Frobenius group} if $F_2$ and $G/F_1$ are Frobenius groups, where $F_1 = \Fit{G}$ and $F_2/F_1=\Fit{G/F_1}$.  We will often refer to the Frobenius kernel of $G/F_1$ as the \emph{upper kernel} of $G$ and the Frobenius kernel of $F_2$ as the \emph{lower kernel} of $G$.
\end{definition}

We immediately see that in a $2$-Frobenius group $G$, the primes dividing $[F_2:F_1]$ are disjoint from those dividing $|F_1|$ or $[G:F_2]$.  In fact, we will see they form a clique in $\PG$. 

\begin{lemmaN}\label{2 Frobenius}
Let $G$ be a $2$-Frobenius group where $F_1 = F(G)$ and $F_2/F_1 = F (G/F_1)$.  Then $G/F_2$ and $F_2/F_1$ are cyclic groups, $F_1$ is not a cyclic group, and the upper kernel of $G$ is a cyclic group of odd order.
\end{lemmaN}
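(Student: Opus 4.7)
The plan exploits the twofold role of the middle factor $F_2/F_1$: it is simultaneously a Frobenius kernel (of $G/F_1$) and a Frobenius complement (of $F_2$), so it must be at once nilpotent and have all Sylow subgroups cyclic or generalized quaternion. First I would identify the kernels. Since the Frobenius kernel of a Frobenius group is always nilpotent (Thompson), and $F_1$ is a nilpotent normal subgroup of $F_2$, it must coincide with the Frobenius kernel $F(F_2) = F_1$, giving that $F_2/F_1$ is a Frobenius complement of $F_2$; similarly $F_2/F_1 = F(G/F_1)$ is the Frobenius kernel of $G/F_1$ and $G/F_2$ is its complement. In particular, $G/F_2$ acts fixed-point-freely on $F_2/F_1$, and $\gcd(|G/F_2|,|F_2/F_1|) = \gcd(|F_2/F_1|,|F_1|) = 1$.

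The main step is to show that $F_2/F_1$ is cyclic of odd order. By nilpotence, $F_2/F_1$ decomposes as a direct product of its Sylow subgroups, and each Sylow is cyclic or generalized quaternion (since $F_2/F_1$ is a Frobenius complement). Suppose for contradiction $|F_2/F_1|$ were even. The Sylow $2$-subgroup, being cyclic or generalized quaternion of order at least $2$, would contain a unique involution $z$. The coprime action of $G/F_2$ on $F_2/F_1$ preserves this Sylow, and hence fixes its unique involution $z$; but $z \neq 1$, contradicting the fixed-point-free action. Hence $|F_2/F_1|$ is odd, so no Sylow subgroup is generalized quaternion, every Sylow is cyclic, and $F_2/F_1$ is cyclic. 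This simultaneously establishes the claim about the upper kernel.

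For $G/F_2$, note that since $F_2/F_1$ is now cyclic, the faithful fpf action embeds $G/F_2$ into the abelian group $\Aut{F_2/F_1}$. Being abelian, no Sylow of $G/F_2$ can be a non-abelian generalized quaternion group, so all Sylows are cyclic; as an abelian group with cyclic Sylows, $G/F_2$ is cyclic.

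For the final assertion, I would argue by contradiction: if $F_1$ were cyclic, it would be abelian, so $F_1 \le C_G(F_1)$. Combined with the standard fact that the Fitting subgroup of a solvable group is self-centralizing, namely $C_G(F_1) \le F_1$, this gives $C_G(F_1) = F_1$, so $G/F_1$ embeds into the abelian group $\Aut{F_1}$. Then $G/F_1$ would be abelian, contradicting its being a Frobenius group (Frobenius groups are necessarily non-abelian). The main obstacle is the middle step, where one must simultaneously leverage both Frobenius structures on $F_2/F_1$ and invoke the uniqueness of the involution in cyclic and generalized quaternion $2$-groups; the remaining steps are routine given standard Frobenius-group machinery.
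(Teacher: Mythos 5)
Your proposal is correct and takes essentially the same route as the paper: identify $F_2/F_1$ as simultaneously the upper Frobenius kernel (hence nilpotent by Thompson) and a Frobenius complement of $F_2$ (hence cyclic or generalized quaternion Sylows), kill the even part via the unique involution and the fixed-point-free action, embed $G/F_2$ in $\Aut{F_2/F_1}$, and rule out cyclic $F_1$ via $C_G(F_1)\leqslant F_1$ and the non-abelianness of Frobenius groups. The one point of divergence is in your $G/F_2$ step, where you use only that $\Aut{F_2/F_1}$ is abelian together with the cyclic-or-quaternion Sylow structure of a Frobenius complement; this is in fact a bit more careful than the paper's wording, since the automorphism group of a cyclic group of odd order need not itself be cyclic (e.g.\ $\Aut{C_{15}} \cong C_4 \times C_2$), though the conclusion that $G/F_2$ is cyclic holds either way.
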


\begin{proof}
$F_2/F_1$, which we will write $H$, is the upper kernel of $G$.  Thus $H$ is nilpotent by Thompson's theorem.  We also know that $H$ is isomorphic to a Frobenius complement of $F_2$, so all Sylow subgroups of $H$ are cyclic or generalized quaternion.  Every Sylow subgroup of $H$ must then be a Frobenius kernel.  It is easy to see that a cyclic $2$-subgroup and a generalized quaternion group cannot be a Frobenius kernel, so the Sylow $2$-subgroup of $H$ is trivial.  All other Sylow subgroups are cyclic, so since $H$ is nilpotent, we conclude that $H$ is cyclic of odd order.

We know that $C_{G/F_1}(H)\leqslant H$ so $G/F_2$ is isomorphic to a subgroup of the automorphism group of $H$.  Since $H$ is a cyclic group of odd order, its automorphism group is cyclic, so $G/F_2$ is cyclic.  Finally, we know that $C_G(F_1)\leqslant F_1$, so $G/F_1$ is isomorphic to a subgroup of the automorphism group of $F_1$.  If $F_1$ is cyclic, its automorphism group is abelian.  Since $G/F_1$ is a Frobenius group, we conclude that $F_1$ is not cyclic.
\end{proof}

We notice that the simplest disconnected subgraphs of $\PG$ occur whenever an edge $pq\notin \PG$ as $\PG[\{p,q\}]$.  This tells us that a Hall $\{p,q\}$-subgroup $H_{pq}$ is either Frobenius or $2$-Frobenius.  Thus it is convenient to distinguish the following type of $2$-Frobenius groups.

\begin{definition}
If $G$ is a $2$-Frobenius group for which there are primes $p$ and $q$ so that $G/F_2$ and $F_1$ are $p$-groups and $F_2/F_1$ is a $q$-group, we say that $G$ is a \emph{$2$-Frobenius group of type $(p,q,p)$}.
\end{definition}

\begin{lemmaN}\label{complement}
If $H$ is a $2$-Frobenius group of type $(p,q,p)$ for primes $p$ and $q$, then $F_2(H)$ has a complement in $H$.  Furthermore, the semidirect product of $\Fit{H}$ with this complement is a Sylow $p$-subgroup of $H$.
\end{lemmaN}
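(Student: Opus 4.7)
The plan is to combine the Frattini argument with Schur--Zassenhaus, exploiting that a Sylow $q$-subgroup of $H$ sits inside $F_2$ as a Frobenius complement. Write $|H/F_2|=p^a$, $|F_2/F_1|=q^b$, $|F_1|=p^c$, and let $L$ be a Sylow $q$-subgroup of $H$. Since $H/F_2$ is a $p$-group, every $q$-element of $H$ lies in $F_2$, so $L\leqslant F_2$ and $L$ is also a Sylow $q$-subgroup of $F_2$. Because $F_2=F_1\rtimes L$ is a Frobenius group with kernel $F_1$ and complement $L$, I get $L\cap F_1=1$ and $N_{F_2}(L)=L$.

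Next I would apply the Frattini argument to $F_2\trianglelefteq H$ at the Sylow $q$-subgroup $L$, obtaining $H=F_2\,N_H(L)=F_1\,N_H(L)$, and use $N_H(L)\cap F_2=N_{F_2}(L)=L$ to deduce $N_H(L)\cap F_1=L\cap F_1=1$. Consequently $|N_H(L)|=|H|/|F_1|=p^{a}q^{b}$, with $L$ a normal Hall $q$-subgroup of $N_H(L)$. Schur--Zassenhaus then supplies a complement $K$ of $L$ in $N_H(L)$, of order $p^a$.

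To finish, I would verify that $K$ is the required complement. Since $K\leqslant N_H(L)$, one has $K\cap F_2\leqslant N_H(L)\cap F_2=L$, hence $K\cap F_2\leqslant K\cap L=1$; and $KF_2\supseteq KLF_1=N_H(L)F_1=H$, so $H=F_2\rtimes K$. For the Sylow assertion, $F_1\cap K\leqslant F_1\cap N_H(L)=1$, so $|F_1 K|=p^{a+c}$, which is the full $p$-part of $|H|$, making $F_1\rtimes K=F_1 K$ a Sylow $p$-subgroup of $H$.

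I do not expect a serious obstacle. The only subtlety worth flagging is that Schur--Zassenhaus cannot be invoked directly on $F_2\trianglelefteq H$, since $|F_2|$ and $|H/F_2|$ share the prime $p$; the Frattini reduction to $N_H(L)$ is precisely what makes the two orders coprime and thus unlocks the splitting.
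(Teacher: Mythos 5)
Your proposal is correct and follows essentially the same route as the paper: identify a Sylow $q$-subgroup $L$ of $H$ as a self-normalizing Frobenius complement inside $F_2$, apply the Frattini argument to get $H=F_1N_H(L)$, and extract from $N_H(L)$ (of order coprime to $|F_1|$) a $p$-complement to $F_2$ whose product with $F_1$ is a Sylow $p$-subgroup. The only cosmetic difference is that you invoke Schur--Zassenhaus to split $L$ off $N_H(L)$, whereas the paper simply takes a Sylow $p$-subgroup of $N_H(L)$, which already serves as that complement since $L$ is a normal Sylow $q$-subgroup there.
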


\begin{proof}
Let $R = \Fit{H}$ and $S = F_2(H)$.  In light of the previous remark, $R$ is the Sylow $p$-subgroup of $S$.  Let $Q\in \Syl{q}{H}$.  Since $Q$ is a Frobenius complement to $R$ in $S$, we have $N_S (Q) = Q$.  By the Frattini argument, $H = N_H (Q) S = N_H (Q) QR = N_H (Q)R$.  Let $P$ be a Sylow $p$-subgroup of $N_H (Q)$.  Thus, $N_H (Q) = PQ$ and $H = N_H (Q) R = PQR = PS$, and $P \cap S = (P \cap N_H (Q)) \cap S = P \cap (N_H (Q) \cap S) = P \cap Q = 1$.  Thus, $P$ is a complement to $S$ in $H$.  Observe that $PR$ is a $p$-group and $|H:PR| = |Q|$, so $PR$ is a Sylow $p$-subgroup of $H$.
\end{proof}
\begin{corollaryN}\label{type (p,q,p)}
Suppose $H$ is a $2$-Frobenius group of type $(p,q,p)$ for primes $p$ and $q$.  If $P$ is a Sylow $p$-subgroup of $H$, then $P$ is not a Frobenius complement.  If $Q$ is a Sylow $q$-subgroup of $H$, then $Q$ is cyclic.
\end{corollaryN}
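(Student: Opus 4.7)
The plan is to deduce both parts from Lemmas \ref{2 Frobenius} and \ref{complement}, writing $R = \Fit{H}$ as in the proof of Lemma \ref{complement}.

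For the claim that $Q$ is cyclic, I would first observe that because $H/F_2(H)$ and $F_1 = R$ are both $p$-groups, every $q$-subgroup of $H$ lies inside $F_2(H)$. In particular, $Q$ is a Sylow $q$-subgroup of $F_2(H) = R \rtimes Q$ (using Lemma \ref{complement}), and the natural projection sends $Q$ isomorphically onto $F_2/F_1$. But $F_2/F_1$ is the upper kernel of $H$, which is cyclic by Lemma \ref{2 Frobenius}, so $Q$ is cyclic.

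For the claim that $P$ is not a Frobenius complement, let $P$ be a Sylow $p$-subgroup of $H$; since $R$ is a normal $p$-subgroup, $R \leq P$. Suppose for contradiction that $P$ is a Frobenius complement. As $P$ is a $p$-group, the standard classification of Frobenius complements forces $P$ to be cyclic or generalized quaternion. If $P$ is cyclic, so is its subgroup $R$, contradicting Lemma \ref{2 Frobenius}. If $P$ is generalized quaternion, then $p = 2$, and since $R$ is a non-cyclic subgroup of $P$, $R$ must itself be generalized quaternion (every subgroup of a generalized quaternion group is either cyclic or generalized quaternion). But then $R$ contains a unique element of order $2$, which lies in $Z(R)$ and is therefore fixed by every automorphism of $R$. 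This contradicts the fact that, inside the Frobenius group $F_2(H) = R \rtimes Q$, the complement $Q$ acts on $R$ without nontrivial fixed points.

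The main obstacle is the generalized quaternion case: Lemma \ref{2 Frobenius} directly rules out a cyclic kernel, but to eliminate the generalized quaternion possibility one must combine the subgroup classification for generalized quaternion $2$-groups with the characteristic nature of the unique central involution, which together preclude the fixed-point-free action that the Frobenius structure of $F_2(H)$ requires.
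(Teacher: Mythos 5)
Your proof is correct, but for the first assertion it takes a genuinely different route from the paper. The paper deduces that $P$ is not a Frobenius complement directly from Lemma \ref{complement}: there $P$ is exhibited as a nontrivial semidirect product of $\Fit{H}$ with a complement to $F_2(H)$, hence $P$ has more than one subgroup of order $p$, whereas a Frobenius complement has a unique subgroup of each prime order. You instead bypass Lemma \ref{complement} entirely (your citation of it for $F_2(H) = R \rtimes Q$ is really just the Frobenius structure of $F_2$, not that lemma's content, which concerns a complement to $F_2$ in $H$): you invoke the classification of $p$-group Frobenius complements as cyclic or generalized quaternion, kill the cyclic case by Lemma \ref{2 Frobenius} (since $R = \Fit{H} \leq P$ is not cyclic), and kill the generalized quaternion case by noting that the unique involution of $R$ would be fixed by the fixed-point-free action of $Q$ on the kernel $R$ --- in effect re-proving that a generalized quaternion group cannot be a Frobenius kernel, a fact the paper already uses inside the proof of Lemma \ref{2 Frobenius}. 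The paper's argument is shorter and exploits structural information (the complement to $F_2$) that it has already built and reuses later; yours is slightly longer and requires a case split, but is self-contained given standard facts about Frobenius complements and quaternion $2$-groups, and does not need Lemma \ref{complement} at all. Your argument that $Q$ is cyclic is essentially the paper's, just with the identification of $Q$ with the upper kernel spelled out.
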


\begin{proof}
Observe that $Q$ is isomorphic to the upper Frobenius kernel of $H$, so $Q$ is cyclic by Lemma \ref{2 Frobenius}.  By Lemma \ref{complement}, $P$ is a nontrivial semidirect product.  This implies that $P$ has more than one subgroup of order $p$.  It is well known that Frobenius complements have unique subgroups of prime order.  Therefore, $P$ cannot be a Frobenius complement.
\end{proof}

The preceding lemmas provide a description of Hall $\{p,q\}$-subgroups for all nonedges in $pq\notin\PG$, but what do these facts mean together?  We are motivated to study the formation of edges in $\overline{\PG}$ and watch for emergent properties in the group structure, suspecting the whole to be greater than the sum of its parts.  To this end, we assign directions to the edges in $\overline {\PG}$.

\begin{definition}
Define an orientation of $\overline\PG$ for a finite solvable group $G$ as follows.  For each edge $pq\in\overline{\PG}$, a Hall $\{p,q\}$-subgroup $H_{pq}$ is either a Frobenius or $2$-Frobenius group by \cite[Thm.~A]{Williams}.  If $H_{pq}$ is a Frobenius group with complement a Sylow $p$-subgroup and kernel a Sylow $q$-subgroup, we direct the edge $pq$ in $\FG$ so that $p\rightarrow q$.  If $H_{pq}$ is a $2$-Frobenius group of type $(p,q,p)$, we direct the edge $pq$ in $\FG$ by $p\rightarrow q$.  We call this orientation the \emph{Frobenius digraph} of $G$, denoted $\FG$.
\end{definition}

\begin{remark}
We choose to direct the edges associated with $2$-Frobenius groups in $\FG$ based on the "higher" Frobenius action so that the orientation is preserved when taking factor groups.  This way, we are guaranteed that if $p \rightarrow q$ in $\FG$, the Frobenius kernel of either $H_{pq}$ or $H_{pq}/F_1(H_{pq})$ will be a Sylow $q$-subgroup.  It is also possible to define $\FG$ so that edges corresponding to $2$-Frobenius groups are oriented based on "lower" Frobenius action, that is, to direct $q\rightarrow p$ in $\FG$ if $H_{pq}$ is a $2$-Frobenius group of type $(p,q,p)$.
\end{remark}

When $r\rightarrow q$ and $q\rightarrow p$ in $\FG$, we notice that $\PG[\{p,q,r\}]$ is disconnected, so a Hall $\{p,q,r\}$-subgroup must be Frobenius or $2$-Frobenius.  We next define $2$-Frobenius groups of type $(p,q,r)$, which we then show are closely related to such $2$-paths $r\rightarrow q \rightarrow p$ in $\FG$.

\begin{definition}
Suppose that there exist distinct primes $p$, $q$, and $r$ so that $G = PQR$, where $P$, $Q$, and $R$ are Sylow $p$-, $q$-, and $r$-subgroups respectively, $PQ$ is a Frobenius group with kernel $P$, and $QR$ is either a $2$-Frobenius group of type $(r,q,r)$ or a Frobenius group with Frobenius kernel $Q$.  Then we say that $G$ is a \emph{$2$-Frobenius group of type $(p,q,r)$}.
\end{definition}

Observe that if $G$ is a $2$-Frobenius group of type $(p,q,r)$, then $\FG$ has the form $r \rightarrow q \rightarrow p$.  We next show that the converse is true, that is, that subgraphs of $\FG$ of the form $r \rightarrow q \rightarrow p$ correspond to $2$-Frobenius Hall subgroups of type $(p,q,r)$.

\begin{lemmaN}\label{2path}
If $r \rightarrow q \rightarrow p$ is a $2$-path in $\FG$ for a solvable group $G$, then a Hall $\{p,q,r\}$-subgroup $H_{pqr}$ is $2$-Frobenius of type $(p,q,r)$.
\end{lemmaN}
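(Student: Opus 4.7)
The plan is to set $H := H_{pqr}$ and analyze its Fitting series. Since $H$ is a Hall subgroup of the solvable group $G$, its prime graph coincides with $\PG[\{p,q,r\}]$; because $pq, qr \notin \PG$ (by the two given digraph edges), the vertex $q$ is isolated from $\{p,r\}$, so $\PG[\{p,q,r\}]$ is disconnected, and by \cite[Thm.~A]{Williams} $H$ is either a Frobenius or a $2$-Frobenius group.

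To rule out the Frobenius case, write $H = K \rtimes C$ and observe that $\pi(K)$ (by nilpotence of $K$) and $\pi(C)$ (since every subgroup of squarefree order in a Frobenius complement is cyclic) each form cliques in $\PG[\{p,q,r\}]$, forcing $q$ to occupy a singleton part. If $\pi(K)=\{p,r\}$, $\pi(C)=\{q\}$, then the Hall $\{q,r\}$-subgroup $R \rtimes Q$ is Frobenius with kernel $R$, orienting $q \to r$ and contradicting $r \to q$; if $\pi(K)=\{q\}$, $\pi(C)=\{p,r\}$, then the Hall $\{p,q\}$-subgroup $Q \rtimes P$ is Frobenius with kernel $Q$, orienting $p \to q$ and contradicting $q \to p$. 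Thus $H$ is $2$-Frobenius.

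Let $F_1 = \Fit{H}$, $F_2 = F_2(H)$, and write $\pi_1, \pi_2, \pi_3$ for the prime sets of the consecutive quotients. The structural constraints I would combine are: nilpotence of $F_1$ together with cyclicity of $F_2/F_1$ and $H/F_2$ (Lemma \ref{2 Frobenius}) preclude two primes from occupying the same layer whenever the corresponding edge is absent from $\PG[\{p,q,r\}]$; and whenever $s \in \pi_2$ and $t \in \pi_1 \setminus \pi_3$, the Hall $\{s,t\}$-subgroup of $F_2$ has the full order of $H_{st}$ and is Frobenius with kernel the Sylow $t$-subgroup, forcing $s \to t$ in $\FG$ (and symmetrically for $s \in \pi_3 \setminus \pi_1$ and $t \in \pi_2$). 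Checking these constraints against $r \to q$ and $q \to p$ pins down $\pi_2 = \{q\}$, $p \in \pi_1$, and $r \in \pi_3$.

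The main obstacle I expect is ruling out the residual possibility that also $p \in \pi_3$, i.e.\ that the Sylow $p$-subgroup is split between $F_1$ and $H/F_2$. Under that hypothesis, the image of $H_{pq}$ in $H/F_1$ is a Hall $\{p,q\}$-subgroup of the Frobenius group $H/F_1$, namely a Frobenius group with kernel $Q$ and a nontrivial $p$-complement lifted from $H/F_2$; pulling back and computing $\Fit{H_{pq}}$ (which a short normal-subgroup argument identifies as $P \cap F_1$) exhibits $H_{pq}$ as $2$-Frobenius of type $(p,q,p)$, which by the orientation convention of $\FG$ gives $p \to q$ and contradicts $q \to p$. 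Hence $P \leq F_1$, so $P$ is characteristic in $F_1$ and normal in $H$, and the restriction of the fixed-point-free action of $F_2/F_1 = Q$ on $F_1$ makes $H_{pq} = P \rtimes Q$ Frobenius with kernel $P$. The final ingredient, that $H_{qr}$ is Frobenius with kernel $Q$ or $2$-Frobenius of type $(r,q,r)$, is immediate from $r \to q$ and the convention defining $\FG$. Together these verify that $H$ is a $2$-Frobenius group of type $(p,q,r)$.
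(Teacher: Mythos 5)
Your reduction to the Frobenius/2-Frobenius dichotomy via Williams, and your elimination of the Frobenius case by the two sub-cases ($q$-Sylow as complement contradicting $r \to q$, $q$-Sylow as kernel contradicting $q \to p$), match the paper exactly and are fine. The gap is in the 2-Frobenius analysis: writing $\pi_1, \pi_2, \pi_3$ for the prime sets of $F_1$, $F_2/F_1$, $H/F_2$, the two constraints you list do not "pin down $\pi_2 = \{q\}$." Since the components of $\Gamma_H$ are $\{q\}$ and $\{p,r\}$, there is a second admissible configuration: $\pi_1 = \pi_3 = \{q\}$ and $\pi_2 = \{p,r\}$, i.e.\ $F_1$ and $H/F_2$ are both $q$-groups while $F_2/F_1$ is a (cyclic) Hall $\{p,r\}$-subgroup. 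Your clique constraint does not exclude it, because $pr \in \PG$ by Lucido's Three Primes Lemma, so $p$ and $r$ may legitimately share the middle layer; and your second constraint is vacuous there, since $\pi_1 \setminus \pi_3 = \pi_3 \setminus \pi_1 = \emptyset$. Nor is this configuration absurd on its face: in it the Hall $\{p,q\}$-subgroup comes out 2-Frobenius of type $(q,p,q)$, which is perfectly consistent with the hypothesis $q \to p$. The only hypothesis that kills it is $r \to q$, and that requires a dedicated argument you have not supplied.

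The paper handles exactly this case by taking $C$ with $F_1 \leq C$, $C F_2 = H$, $C \cap F_2 = F_1$ and a Sylow $r$-subgroup $R$, noting $RF_1 \trianglelefteq H$, and showing $CRF_1$ is a Hall $\{q,r\}$-subgroup of $G$ that is 2-Frobenius of type $(q,r,q)$; by the orientation convention this forces $q \to r$ in $\FG$, contradicting $r \to q$. You could close the gap with the same kind of argument you already sketch for the residual possibility $p \in \pi_1 \cap \pi_3$: a prime lying in both outer layers together with a prime in the middle layer yields a Hall subgroup of type $(t,s,t)$ and hence the orientation $t \to s$; applied to $q \in \pi_1 \cap \pi_3$ and $r \in \pi_2$ this gives $q \to r$, the desired contradiction. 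But as written, the assertion that your constraints force $\pi_2 = \{q\}$ is unjustified, so the proof is incomplete at that step. (The remainder — excluding $p \in \pi_3$ via a type-$(p,q,p)$ Hall $\{p,q\}$-subgroup, concluding $P \leq F_1$ so that $P \rtimes Q$ is Frobenius with kernel $P$, and reading off the $\{q,r\}$ condition from $r \to q$ — is sound, and differs only cosmetically from the paper, which instead rules out the 2-Frobenius alternative for $H_{pq}$ by the cyclicity of $F_2/F_1$ from Lemma \ref{2 Frobenius}.)
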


\begin{proof}
Let $H = H_{pqr}$.  The prime graph of $H$ has two connected components $\{ p, r \}$ and $\{ q \}$, so $H$ is Frobenius or $2$-Frobenius by  \cite[Thm.~A]{Williams}.  Suppose first that $H$ is a Frobenius group with kernel $K$ and complement $C$.  One of the connected components is the set of primes dividing $|C|$ and the other is the set of primes dividing $|K|$.  Thus either $K$ or $C$ is a Sylow $q$-subgroup of $H$.  Suppose $C$ is a Sylow $q$-subgroup of $H$.  Since $K$ is nilpotent, the Sylow $r$-subgroup $R$ of $K$ is normal in $K$.  However, then $C$ normalizes $R$ and $RC$ is a Frobenius group with kernel $R$, contradicting that $r\rightarrow q$ in $\FG$.  On the other hand, if $K$ is the Sylow $q$-subgroup of $H$, we see that $HP$ is a Frobenius group with kernel $H$, contradicting $q \rightarrow p$ in $\FG$.  Thus we conclude that $H$ is not Frobenius.

We now know that $H$ is $2$-Frobenius, so it remains to be shown that $H$ is of type $(p,q,r)$.  Observe that either $F_2/F_1$ is the Sylow $q$-subgroup of $H/F_1$ or $F_1$ and $H/F_2$ are both $q$-groups.  Suppose $F_1$ and $H/F_2$ are $q$-groups.  Let $C$ be a complement to $F_2$ in $H$, and let $R$ be a Sylow $r$-subgroup of $H$.  Since $G/F_1$ is a Frobenius group, $RF_1$ is normal in $G$.  We see that $CRF_1$ is a Hall $\{ q,r \}$-subgroup of $H$ and is a $2$-Frobenius group of type $(q,r,q)$, and we have $q \rightarrow r$ in $\FG$, a contradiction.  Thus, $F_2/F_1$ is the Sylow $q$-subgroup of $G$, which is cyclic by Lemma \ref{2 Frobenius}.  We note that a Hall $\{q, r\}$-subgroup of $H$ is either Frobenius or $2$-Frobenius of type $(r,q,r)$.  Since $q \rightarrow p$ in $\FG$, we know that a Hall $\{p, q\}$-subgroup of $H$ is either Frobenius or $2$-Frobenius of type $(p,q,p)$.  In latter case, we know that the Sylow $q$-subgroup is not cyclic by Corollary \ref{type (p,q,p)} and this contradicts the fact we have seen that a Sylow $q$-subgroup is cyclic.  Therefore, a Hall $\{p, q\}$-subgroup is a Frobenius group, and we conclude that $H$ has type $(p,q,r)$.
\end{proof}

If we can describe the Sylow subgroups of $2$-Frobenius groups of type $(p,q,r)$, we can read off the structure of Sylow $p$, $q$, and $r$- subgroups of $G$ whenever $r\rightarrow q\rightarrow p$ in $\FG$.

\begin{lemmaN}\label{type (p,q,r)}
Suppose that $p,q,r$ are distinct primes and $G=PQR$ is a $2$-Frobenius group of type $(p,q,r)$, where $P$, $Q$, and $R$ are Sylow $p$-, $q$-, and $r$-subgroups respectively. Then $P$ is not cyclic, $Q$ is cyclic, and $R$ is not generalized quaternion.
\end{lemmaN}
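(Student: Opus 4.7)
The plan is to identify the Fitting series of $G$ and then invoke Lemma \ref{2 Frobenius}. I claim that $G$ is itself a $2$-Frobenius group (in the sense of the original definition) with $F_1(G) = P$, $F_2(G) = PQ$, and $G/F_2(G) \cong R$; granting this, all three conclusions follow at once from Lemma \ref{2 Frobenius}, which asserts that $F_1(G)$ is not cyclic, $F_2(G)/F_1(G)$ is cyclic of odd order, and $G/F_2(G)$ is cyclic (and cyclic groups are never generalized quaternion).

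To pin down the Fitting series, I would first note $P \triangleleft PQ$ as the Frobenius kernel of $PQ$, then use the structure of $QR$---either Frobenius with kernel $Q$, or $2$-Frobenius of type $(r,q,r)$---to argue that $R$ also normalizes $P$, so $P \triangleleft G$. The fixed-point-free action of $Q$ on $P$ prevents any nontrivial $q$- or $r$-element from centralizing $P$, forcing $F_1(G) \leq P$; combined with $P \triangleleft G$, this gives $F_1(G) = P$. Since $|P|$ is coprime to $|QR|$, we get $G/F_1(G) \cong QR$, whose Fitting subgroup is $Q$, and so $F_2(G)/F_1(G) \cong Q$ and $G/F_2(G) \cong R$.

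The main obstacle will be verifying $P \triangleleft G$ (and that no additional normal nilpotent subgroup inflates $F_1(G)$) in the subcase where $QR$ is itself $2$-Frobenius of type $(r,q,r)$; there $QR$ contributes its own normal $r$-subgroup $F_1(QR)$ whose interaction with $P$ in $G$ must be analyzed carefully. A cleaner fallback, which avoids the global Fitting identification, is to prove each conclusion directly from the local structures: $Q$ is cyclic because it is a Frobenius complement in $PQ$ (hence cyclic or generalized quaternion) and either a Frobenius kernel in $QR$ (ruling out generalized quaternion, as generalized quaternion groups have a unique element of order $2$ which must be fixed by any fixed-point-free complement) or the upper Frobenius kernel of $QR$ (cyclic by Lemma \ref{2 Frobenius}); $R$ is not generalized quaternion because either $R$ embeds into the cyclic group $\Aut{Q}$ (in the Frobenius case for $QR$) or Corollary \ref{type (p,q,p)} shows $R$ is not even a Frobenius complement (in the $2$-Frobenius case for $QR$, and generalized quaternion groups are Frobenius complements). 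The claim that $P$ is not cyclic, however, really does seem to require the global Lemma \ref{2 Frobenius} together with the identification $F_1(G) = P$.
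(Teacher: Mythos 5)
Your fallback arguments for $Q$ and $R$ are sound and essentially the paper's own: $Q$ is a Frobenius complement in $PQ$ that is also either the Frobenius kernel or the upper kernel of $QR$, hence cyclic; and $R$ either embeds in the abelian group $\Aut{Q}$ or fails to be a Frobenius complement by Corollary \ref{type (p,q,p)}, hence in either case is not generalized quaternion. The genuine gap is the identification you lean on for the third assertion, namely $F_1(G)=P$, $F_2(G)=PQ$, $G/F_2(G)\cong R$. In the subcase where $QR$ is $2$-Frobenius of type $(r,q,r)$ this is not merely "an obstacle to verify carefully" --- it is false. If $G$ is $2$-Frobenius with $F_1(G)=P$, then $G/F_1(G)$ is a Frobenius group and, since $QR\cap P=1$, it is isomorphic to $QR$; this contradicts the hypothesis that $QR$ is $2$-Frobenius in that subcase. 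So there one necessarily has $\mathbf{O}_r(G)\neq 1$, $F_1(G)=P\times\mathbf{O}_r(G)$ properly contains $P$, and $G/F_2(G)$ is only a proper quotient of $R$. Worse, your plan would "prove" that $R\cong G/F_2(G)$ is cyclic, which is false in that subcase: by Lemma \ref{complement} the Sylow $r$-subgroup of a $2$-Frobenius group of type $(r,q,r)$ is a nontrivial semidirect product, so it has more than one subgroup of order $r$ and cannot be cyclic --- this is exactly why the lemma only claims that $R$ is not generalized quaternion. Since you explicitly defer the non-cyclicity of $P$ to this identification, that part of your proof has a hole.

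The paper avoids the problem by never claiming $F_1(G)=P$: it takes a complement $C$ to $F_2(G)$ in $G$ (the Frattini argument underlying Lemma \ref{complement}), observes that $C$ normalizes $PQ$, and checks that $PQC$ is a $2$-Frobenius group whose Fitting subgroup is contained in $P$; Lemma \ref{2 Frobenius} applied to $PQC$ then yields that $P$ is not cyclic. In other words, the missing idea is to cut away the possibly nontrivial $r$-part of $\Fit{G}$ by replacing $R$ with a complement to $F_2(G)$ before invoking Lemma \ref{2 Frobenius}, rather than trying to force the Fitting series of $G$ itself to be $P\leqslant PQ\leqslant G$. To salvage your outline you would need to supply this (or some other) argument for the non-cyclicity of $P$ that survives $\mathbf{O}_r(G)\neq 1$.
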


\begin{proof}
We observe that $Q$ is isomorphic to the upper kernel of $G$.  Thus $Q$ is cyclic by Lemma \ref{2 Frobenius}.  If $r$ does not divide $|F_1(G)|$, then $R$ is isomorphic to a Frobenius complement of $G/F_1$, and therefore cyclic by Lemma \ref{2 Frobenius}.  If $r$ does divide $|F_1 (G)|$, then $H_{qr}$ is $2$-Frobenius, so $R$ is not a Frobenius complement by Lemma \ref{type (p,q,p)}.  In both cases, we see that $R$ is not generalized quaternion.  By Lemma \ref{complement}, we know that $F_2 (G)$ has a complement $C$.  Observe that $C$ will normalize $PQ$.  It is not difficult to see that $PQC$ is a $2$-Frobenius group, and so, by Lemma \ref{2 Frobenius}, we see that $P$ is not cyclic.
\end{proof}

In fact, we can extend this result to gain even more information from $2$-paths in $\FG$.

\begin{corollaryN}\label{adjacent}
Let $G$ be a solvable group.  If $p_1 \rightarrow p_2 \rightarrow p_3$ is a $2$-path in $\FG$, then for every prime $q \in N_\uparrow^1 (p_3)$, a Hall $\{q, r\}$-subgroup $H_{qr}$ is a Frobenius group for every prime $r \in N_\downarrow^1 (q)$.
\end{corollaryN}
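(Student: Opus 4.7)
My plan is to argue by contradiction. Because $q \in N_\uparrow^1(p_3)$ means $q \to p_3$ and $r \in N_\downarrow^1(q)$ means $q \to r$ in $\FG$, the only way $H_{qr}$ can fail to be Frobenius is if it is a $2$-Frobenius group of type $(q,r,q)$, so I assume this and seek a contradiction.

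First I would unpack the $2$-path hypothesis. In the notation of Lemma \ref{2path}, the path $p_1 \to p_2 \to p_3$ takes the form $r \to q \to p$ with $r=p_1$, $q=p_2$, $p=p_3$, so Lemma \ref{2path} gives that the Hall $\{p_1,p_2,p_3\}$-subgroup of $G$ is $2$-Frobenius of type $(p_3,p_2,p_1)$. Lemma \ref{type (p,q,r)} then tells me that the Sylow $p_3$-subgroup of $G$ is not cyclic. Next I would unpack the contradiction hypothesis: if $H_{qr}$ is $2$-Frobenius of type $(q,r,q)$, Corollary \ref{type (p,q,p)} says the Sylow $q$-subgroup of $G$ is not a Frobenius complement. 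The finishing move is to use the edge $q \to p_3$: by the definition of $\FG$, $H_{qp_3}$ is either a Frobenius group with complement a Sylow $q$-subgroup and kernel a Sylow $p_3$-subgroup, or else a $2$-Frobenius group of type $(q,p_3,q)$. In the first case the Sylow $q$-subgroup is a Frobenius complement, contradicting what we just derived; in the second case Corollary \ref{type (p,q,p)} forces the Sylow $p_3$-subgroup to be cyclic, contradicting what the $2$-path gave us. Either alternative is impossible, so $H_{qr}$ must be Frobenius.

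The only point that requires real care is the orientation convention for edges produced by $2$-Frobenius groups: because a $2$-Frobenius group of type $(p,q,r)$ produces the directed path $r \to q \to p$ in $\FG$, the $2$-path $p_1 \to p_2 \to p_3$ actually places $p_3$ in the role of the ``bottom'' prime of the associated Hall subgroup, which is exactly what makes the Sylow $p_3$-subgroup noncyclic and closes the argument against the second subcase. Once that bookkeeping is in place, the proof is a direct synthesis of Lemma \ref{2path}, Lemma \ref{type (p,q,r)}, and Corollary \ref{type (p,q,p)}, and I do not anticipate any further technical obstacle.
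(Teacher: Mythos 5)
Your proposal is correct and follows essentially the same route as the paper: both use Lemma \ref{2path} together with Lemma \ref{type (p,q,r)} to get that $P_3$ is not cyclic, and then play Corollary \ref{type (p,q,p)} off the edge $q \rightarrow p_3$ to rule out the $2$-Frobenius possibility for $H_{qr}$. The only difference is cosmetic: the paper first establishes that $H_{qp_3}$ is Frobenius with complement $Q$ and then handles the remaining primes $r$, whereas you assume $H_{qr}$ is of type $(q,r,q)$ at the outset and dispatch the two alternatives for $H_{qp_3}$; the substance is identical.
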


\begin{proof}
Let $q \in N_\uparrow^1(p_3)$ be any prime for which $N_\uparrow^1(q)$ is nonempty.  Note that $N_\uparrow^1 (p_2)$ is nonempty, so such a prime $q$ exists.  Consider a Hall $\{p_1,p_2,p_3\}$-subgroup $H_{p_1p_2p_3}$.  We have by Lemma \ref{2path} that $H_{sqp_3}$ is a $2$-Frobenius group of type $(p_3, p_2, p_1)$.  We know that the Hall $\{p_2, p_3\}$-subgroup $H_{p_2p_3}$  is a Frobenius group, and by Lemma \ref{type (p,q,r)}, $P_3$ is not cyclic.

With this in mind, let $q \in N_\uparrow^1(p_3)$ be arbitrary and consider a prime $r \in N_\downarrow^1 (q)$.  Let $H_{qr}$ be a Hall $\{q, r\}$-subgroup of $G$.  Note that the prime graph of $H_{qr}$ is disconnected, so $H_{qr}$ is either Frobenius or $2$-Frobenius of type $(r,q,r)$.  We show that it is Frobenius.  We first show the result if $r = p_3$.  If $H_{qp_3}$ is a $2$-Frobenius group of type $(q,p_3,q)$, then $P_3$ must be cyclic by Corollary \ref{type (p,q,p)}, contradicting Lemma \ref{type (p,q,r)}.  Thus $H_{qp_3}$ is a Frobenius group with complement $Q$.  For each remaining prime $r \in N_\downarrow^1(q)$ other than $p_3$, suppose that $H_{qr}$ is a $2$-Frobenius group of type $(q,r,q)$.  Then $Q$ cannot be a Frobenius complement, and we obtain a contradiction with  Corollary \ref{type (p,q,p)}. Hence $H_{qr}$ is Frobenius as well.
\end{proof}

Our investigation of paths in $\FG$ concludes with the following theorem, which strongly informs us of the types of structures that may occur in $\FG$.  This theorem constitutes the primary argument of one direction of the classification.

\begin{corollaryN}\label{3Pcor}
The Frobenius digraph of a solvable group cannot contain a directed $3$-path.
\end{corollaryN}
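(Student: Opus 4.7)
The plan is to argue by contradiction: suppose $\FG$ contains a directed $3$-path $p_1 \rightarrow p_2 \rightarrow p_3 \rightarrow p_4$, and derive inconsistent structural information about a Sylow $p_3$-subgroup of $G$ from the two overlapping directed $2$-subpaths. The key observation is that the prime $p_3$ sits in the ``source'' position of the first $2$-path $p_1 \rightarrow p_2 \rightarrow p_3$ and in the ``middle'' position of the second $2$-path $p_2 \rightarrow p_3 \rightarrow p_4$, and Lemma \ref{type (p,q,r)} gives opposite cyclicity conclusions for these two positions.

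More concretely, I would proceed as follows. First, apply Lemma \ref{2path} to the $2$-path $p_1 \rightarrow p_2 \rightarrow p_3$ to conclude that a Hall $\{p_1,p_2,p_3\}$-subgroup $H_{p_1p_2p_3}$ of $G$ is a $2$-Frobenius group of type $(p_3,p_2,p_1)$. Next, apply Lemma \ref{2path} to the $2$-path $p_2 \rightarrow p_3 \rightarrow p_4$ to conclude that a Hall $\{p_2,p_3,p_4\}$-subgroup $H_{p_2p_3p_4}$ is a $2$-Frobenius group of type $(p_4,p_3,p_2)$. Since a Sylow $p_3$-subgroup of any Hall subgroup whose order is divisible by $p_3$ is a Sylow $p_3$-subgroup of $G$, both statements describe the same Sylow $p_3$-subgroup.

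The contradiction comes from Lemma \ref{type (p,q,r)}. In the type $(p_3,p_2,p_1)$ description of $H_{p_1p_2p_3}$, the prime $p_3$ occupies the first slot, so the Sylow $p_3$-subgroup is \emph{not} cyclic. In the type $(p_4,p_3,p_2)$ description of $H_{p_2p_3p_4}$, the prime $p_3$ occupies the middle slot, so the Sylow $p_3$-subgroup \emph{is} cyclic. These two statements are incompatible, so no such directed $3$-path exists.

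There is essentially no obstacle here beyond bookkeeping, since all the heavy lifting has been done in Lemmas \ref{2path} and \ref{type (p,q,r)}; the only thing to be careful about is matching the direction conventions (``$r \rightarrow q \rightarrow p$ yields type $(p,q,r)$'') so that the right prime ends up in the right coordinate. No alternative case analysis is needed, and Corollary \ref{adjacent} is not required for this argument, though it would give a parallel route through the Frobenius-versus-$2$-Frobenius dichotomy of Hall $\{p_i,p_{i+1}\}$-subgroups along the path.
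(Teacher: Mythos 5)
Your proposal is correct and follows essentially the same route as the paper: apply Lemma \ref{2path} to the two overlapping directed $2$-subpaths and then use Lemma \ref{type (p,q,r)} to extract contradictory cyclicity statements about a single Sylow subgroup. The only cosmetic difference is that the paper's write-up (with its type labels written in the opposite order) locates the contradiction at the Sylow $p_2$-subgroup, whereas your careful matching of the ``$r \rightarrow q \rightarrow p$ gives type $(p,q,r)$'' convention places it at the Sylow $p_3$-subgroup; the argument is the same.
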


\begin{proof}
Suppose that $p_1 \rightarrow p_2 \rightarrow p_3 \rightarrow p_4$ is a $3$-path in the Frobenius digraph of $G$.  We have that $H_{p_1p_2p_3}$ is $2$-Frobenius of type $(p_1,p_2,p_3)$ by Lemma \ref{2path}.  If $P_2$ is a Sylow $p_2$-subgroup of $G$, then $P_2$ is cyclic by Lemma \ref{type (p,q,r)}.  On the other hand, $H = H_{p_2,p_3,p_4}$ is $2$-Frobenius of type $(p_2,p_3,p_4)$ by Lemma \ref{2path}.  Applying Lemma \ref{type (p,q,r)}, it follows that $P_2$ is not cyclic. This is a contradiction.
\end{proof}

\begin{remark}
It is obvious that the Frobenius digraph is acyclic when we assume that each arrow corresponds to a Frobenius group, since the order of a Frobenius complement divides the order of its kernel minus one\cite[Lem.~16.6]{HuppertCharacters}.  That this remains true when we allow the possibility of $2$-Frobenius groups is not as easy, but follows immediately from Corollary \ref{3Pcor}.
\end{remark}

Conversely, we show that any digraph violating neither Corollary \ref{3Pcor} nor Lucido's Three Primes Lemma is isomorphic to the Frobenius digraph of some solvable group.

\begin{theoremN}\label{PartialConverse}
For any $3$-colorable, triangle free graph $F$, there exists a solvable group $G$ for which $F$ is isomorphic to the complement of prime graph of $G$.  Furthermore, there exists an acyclic orientation of $F$ that does not contain a directed $3$-path, and given any such orientation $\vec{F}$, there exists a solvable group $G$ for which $\vec{F}$ is isomorphic to the Frobenius digraph of $G$.
\end{theoremN}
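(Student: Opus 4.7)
The plan is to first establish the existence of a suitable orientation and then realize any such orientation as the Frobenius digraph of an explicit solvable group. Since $F$ is $3$-colorable, fix a proper $3$-coloring $c\colon V(F)\to\{1,2,3\}$ and orient every edge of $F$ from its lower-colored endpoint to its higher-colored endpoint. Colors strictly increase along every directed path, forcing the orientation to be acyclic; moreover, a directed $3$-path would require four strictly increasing color values drawn from a palette of three, which is impossible.

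For the main construction, fix an acyclic orientation $\vec{F}$ of $F$ without a directed $3$-path. This hypothesis forces a convenient layer decomposition of $V(F)$: let $L_0$ be the isolated vertices, $L_1$ the sources with $N_\uparrow^1(v)=\emptyset$ and $N_\downarrow^1(v)\neq\emptyset$, $L_3$ the sinks with $N_\uparrow^1(v)\neq\emptyset$ and $N_\downarrow^1(v)=\emptyset$, and $L_2$ the remaining vertices, each having both $N_\uparrow^1$ and $N_\downarrow^1$ nonempty. Every arrow of $\vec{F}$ runs from $L_1$ into $L_2\cup L_3$ or from $L_2$ into $L_3$. Assign each $v\in V(F)$ a distinct prime $\hat{v}$, chosen via Dirichlet's theorem so that the cyclotomic divisibility conditions needed below are simultaneously satisfied.

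I then construct $G$ with Fitting series $1\leq F_1\leq F_2\leq G$, where $F_1=\Fit{G}=\prod_{v\in L_3\cup L_0}V_v$ is a direct product of elementary abelian $\hat{v}$-groups, $F_2/F_1=\prod_{v\in L_2}C_v$ is a direct product of cyclic groups of order $\hat{v}$, and $G/F_2=\prod_{v\in L_1}C_v$ likewise. The upper action (of $G/F_2$ on $F_2/F_1$) is declared componentwise: $C_p$ acts fixed-point-freely on $C_m$ precisely when $p\to m$ in $\vec{F}$, enabled by choosing primes so that $\hat{p}\mid \hat{m}-1$. For each $v\in L_3$, let $H_v$ be the subgroup of $(F_2/F_1)\rtimes(G/F_2)$ generated by the cyclic factors $C_u$ with $u\in N_\uparrow^1(v)$; its Sylow subgroups are all cyclic, so $H_v$ is a valid Frobenius-complement candidate. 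I then choose $V_v$ to be a faithful fixed-point-free $H_v$-module over $\mathbb{F}_{\hat{v}}$, which exists as soon as $\hat{v}$ satisfies $|H_v|\mid \hat{v}^d-1$ for some~$d$. The main technical obstacle is arranging these modules so that every required Frobenius action exists while the Sylow subgroups for non-edges of $F$ genuinely commute; this is handled by the independence of the factors in the three direct products and by invoking Dirichlet's theorem to meet all necessary divisibility constraints at once.

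Finally, I verify correctness by cases. Whenever $\{u,v\}$ is a non-edge of $F$, the Sylow subgroups for $\hat{u}$ and $\hat{v}$ commute in $G$ by construction (they lie in disjoint direct-product factors and neither acts on the other), producing an element of order $\hat{u}\hat{v}$ and placing $\hat{u}\hat{v}\in\PG$. Whenever $u\to v$ is an arrow of $\vec{F}$, the Hall $\{\hat{u},\hat{v}\}$-subgroup is a Frobenius group with Sylow $\hat{v}$-subgroup as kernel and Sylow $\hat{u}$-subgroup as complement, so $\hat{u}\hat{v}\notin\PG$ and the arrow $\hat{u}\to\hat{v}$ is recorded in $\FG$. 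Hence $\FG\cong\vec{F}$, and the first statement of the theorem follows upon forgetting the orientation.
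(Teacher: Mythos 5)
The first half of your argument (orienting $F$ from lower to higher color classes to get an acyclic orientation with no directed $3$-path) is exactly the paper's argument and is fine, as is the layer decomposition $L_1,L_2,L_3$ and the top two layers of the construction, which match the paper's group $K=U\rtimes T$. The gap is in how the bottom layer acts, i.e., in the modules $V_v$ for sinks $v\in L_3$. You let only the factors $C_u$ with $u\in N_\uparrow^1(v)$ act on $V_v$ and declare that all other primes act trivially, and your verification of the non-edges of $F$ rests on this ("they lie in disjoint direct-product factors and neither acts on the other"). But this assignment does not define a group. Take $u\to w\to v$ in $\vec{F}$ with $u\in L_1$, $w\in L_2$, $v\in L_3$ (such configurations exist whenever $L_2\neq\emptyset$). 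Inside your group $K$ the subgroup $\langle C_w,C_u\rangle$ is a Frobenius group $C_w\rtimes C_u$, and you are asking for a representation of it on $V_v$ whose kernel contains $C_u$ but not $C_w$. The kernel must be normal, $C_u$ is not normal in $C_w\rtimes C_u$, and its normal closure contains $[C_w,C_u]=C_w$; so any action killing $C_u$ kills $C_w$ as well, contradicting the required fixed-point-free action of $C_w$ on $V_v$. Hence the semidirect product $G$ you describe does not exist: the vertices in $N_\uparrow^2(v)$ are \emph{forced} to act nontrivially on $V_v$, and then your blanket claim that all non-edges of $F$ give commuting Sylow subgroups collapses precisely for the pairs $\{u,v\}$ with $u\in N_\uparrow^2(v)$ (which are non-edges of $F$ by triangle-freeness, so an element of order $\hat{u}\hat{v}$ must still be produced).

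This is exactly the point the paper's proof is built to handle: there $V_v$ is taken to be a faithful irreducible module not for the cyclic group on $N_\uparrow^1(v)$ but for a Hall subgroup $H_v$ of $K$ on the primes of $N_\uparrow^1(v)\cup N_\uparrow^2(v)$, chosen via \cite[Lemma 1.8]{Keller1} so that only $\Fit{H_v}$ (the part coming from $N_\uparrow^1(v)$) acts fixed point freely; the two-step-back primes then act nontrivially but not fixed point freely, which is what keeps the edge $\hat{u}\hat{v}$ in the prime graph while still making each $w\to v$ a Frobenius arrow. To repair your proposal you would need to replace "trivial action outside $N_\uparrow^1(v)$" by such a module for the larger Hall subgroup and then argue that the primes in $N_\uparrow^2(v)$ retain nonzero fixed points on $V_v$. (A smaller point: your justification "all Sylow subgroups of $H_v$ are cyclic, so $H_v$ is a valid Frobenius-complement candidate" is not a correct criterion in general — e.g.\ $S_3$ has all Sylow subgroups cyclic but is not a Frobenius complement; in your setting it happens to be harmless only because $N_\uparrow^1(v)$ is an independent set of $F$, so your $H_v$ is in fact cyclic.)
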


\begin{proof}
We begin by showing that if $F$ is a $3$-colorable, triangle free, then an acyclic orientation of $F$ exists that does not contain a $3$-path.  Take any $3$-coloring of $F$ and arbitrarily label the vertices with numbers $1$, $2$, and $3$ so that vertices of the same color have the same label.  Direct the edges of $F$ from lower to higher numbered colors. By construction, the resulting orientation is acyclic and contains no $3$-paths.

Now, let $\vec{F}$ be any acyclic orientation of $F$ that does not contain a directed $3$-path.  We now show that there is a solvable group $G$ whose Frobenius digraph is isomorphic to $\vec F$.  Let $\mathcal{O}$ be the set of vertices in $\vec{F}$ with in-degree $0$ and non-zero out-degree, $\mathcal{D}$ the set of vertices with both in- and out-degrees non-zero, and $\mathcal{I}$ the vertices with out-degree $0$.  (Here $\mathcal{O}$ reminds us of "outgoing" vertices, $\mathcal{D}$ reminds us of "double Frobenius" as by Lemma \ref{2path} vertices with this property imply the existence of a $2$-Frobenius Hall subgroup, and $I$ reminds us of "ingoing" vertices, including singleton vertices in $\mathcal{I}$.)  Denote the number of vertices in each of these sets by $n_o$, $n_d$, and $n_i$, respectively.

Let $\mathcal {P} = \{ p_j \in \mathbb{P} : j = 1, \ldots, n_o \}$ be a set of distinct primes and define $p = p_1p_2\cdots p_{n_o}$.  By Dirichlet's theorem, we can pick a set $\mathcal{Q} = \{ q_k \in \mathbb{P} : k = 1,\ldots,n_d \}$ of distinct primes such that $\displaystyle q \equiv 1 \imod{p}$ for every prime $q \in \mathcal{Q}$.  Define a directed graph $\vec{\Lambda}$ with vertex set $\mathcal{P} \bigcup \mathcal{Q}$ and edge set defined by the image of some fixed injective graph homomorphism $\Phi : \vec{F} [\mathcal{O}\bigcup \mathcal{D}] \rightarrow \vec{\Lambda}$ mapping vertices in $\mathcal{O}$ to primes in $\mathcal{P}$ and vertices in $\mathcal{D}$ to primes in $\mathcal{Q}$.  Let $T = C_{p_1} \times \ldots \times C_{p_{n_o}}$ and $U = C_{q_1} \times \ldots \times C_{q_{n_d}}$.  Since $p_j \mid q_k - 1$ for each pair of primes $p_j \in \mathcal{P}, q_k \in \mathcal{Q}$, we can define a semidirect product $K = U\rtimes T$ by allowing $C_{p_j}$ to act fixed point freely on $C_{q_k}$ if $p_jq_k\in \vec{\Lambda}$ and trivially otherwise.  It follows that each Hall $\{p_j,q_k\}$-subgroup of $K$ is a Frobenius group if $p_jq_k\in\vec{\Lambda}$ and a direct product otherwise.  Note that $\vec{\Lambda}$ is the Frobenius digraph of $K$.

For each vertex $v \in \mathcal{I}$, let $\Phi_1 (v)$ denote the set of primes in the image of $\vec{F} [N_\uparrow^1(v)]$ under $\Phi$, with $\Phi_2(v)$ defined analogously.  Note that $N_{\uparrow}^1(v) \bigcap N_{\uparrow}^2(v) = \emptyset$ by the assumption that $F$ is triangle free.  In the case that $\Phi_1(v) \not= \emptyset$, let $H_{v}$ be a Hall $\Phi_1 (v) \bigcup \Phi_2 (v)$-subgroup of $K$.  Then $\Fit{H_v}$ is a cyclic Hall $\Phi_1(v)$-subgroup of $K$.  Let $\mathcal {R} = \{ r_j \in \mathbb{P} : j = 1, \ldots, n_i \}$ be a set of distinct primes so that each $v_j \in \mathcal{I}$ is associated with a unique $r_j$.  When $\Phi_1(v_j) = \emptyset$, define $R_{v_j} = C_{r_j}$.  For the remaining vertices in $\II$, again by Dirichlet, we may insist that $r_j \equiv 1 \imod {|\Fit{H_{v_j}}|}$.  Then, by \cite[Lemma 1.8]{Keller1}, there exists a faithful irreducible $\mathbb{F}_{r_j} H_{v_j}$-module $R_{v_j}$ such that $\Fit {H_{v_j}}$ acts fixed point freely on $R_{v_j}$.  Finally, define a direct product $J = R_{v_1} \times \cdots \times R_{v_{n_i}}$.  Let $G = J \rtimes K$ be the semidirect product where any subgroup $C_s \leqslant K$ for $s \in \mathcal{P} \bigcup \mathcal{Q}$ acts on $R_{v_j}$ by the appropriate module action when $s \in \Phi_1(v_j)\bigcup \Phi_2(v_j)$ and trivially otherwise.  It follows that $\vec{F}$ is isomorphic to the Frobenius digraph of $G$.
\end{proof}

Note that any group constructed in the method described above from graph with chromatic number $3$ has Fitting length $3$.  We will return to this observation later in the paper during further examination of the connection between Fitting lengths and prime graphs.

As an immediate corollary to Theorem \ref{PartialConverse}, we observe that the prime graphs of most solvable groups contain a $3$-cycle, and in fact classify all exceptions.

\begin{corollaryN}\label{Girf}
The prime graph of any solvable group has girth $3$ aside from the following exceptions: the $4$-cycle, the $5$-cycle, and the $7$ unique forests that do not contain an independent set of size $3$.
\end{corollaryN}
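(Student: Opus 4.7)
The plan is to apply Theorem \ref{PartialConverse}, which asserts that $F$ is the prime graph of some solvable group if and only if $\overline{F}$ is $3$-colorable and triangle-free. A graph has girth different from $3$ precisely when it contains no triangle, so the corollary amounts to classifying those $F$ for which both $F$ and $\overline{F}$ are triangle-free, with $\overline{F}$ additionally $3$-colorable.

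The first step is Ramsey's theorem $R(3,3)=6$, which forces $|V(F)|\le 5$ and reduces the problem to a finite enumeration. For $|V(F)|=5$ a standard degree argument shows that $C_5$ is the unique graph on five vertices with no triangle and no independent set of size three, yielding one exception. For $|V(F)|\le 4$ I would simply run through the (few) isomorphism types and keep those with both $F$ and $\overline{F}$ triangle-free. In addition to the $4$-cycle $C_4$, this produces exactly seven forests: the single-vertex graph, $\overline{K_2}$, $K_2$, the disjoint union $K_2 \cup K_1$, $P_3$, the matching $2K_2$, and $P_4$.

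To finish, I need to confirm that the $3$-colorability half of Theorem \ref{PartialConverse} is automatic for each graph on the list. This is immediate, since any triangle-free graph on at most five vertices is either bipartite or the $5$-cycle, both of which are $3$-colorable. Hence every graph on the list is realized as the prime graph of some solvable group by Theorem \ref{PartialConverse}, and no other solvable prime graph can fail to have girth $3$.

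I do not expect a serious obstacle: once Theorem \ref{PartialConverse} is in hand, the corollary reduces to a short Ramsey-based case check. The only mildly delicate step is the $n=5$ case, where one must verify that in a triangle-free graph on five vertices whose independence number is at most two every vertex has degree exactly two, forcing $F\cong C_5$; the rest is bookkeeping.
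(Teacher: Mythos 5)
Your proof is correct, but it reaches the classification by a genuinely different route from the paper. You reduce the problem to graphs $F$ with both $F$ and $\overline{F}$ triangle-free and invoke Ramsey's theorem $R(3,3)=6$ to cap the number of vertices at $5$ at once, after which everything is a finite check (with the degree argument pinning down $C_5$ as the unique extremal graph on five vertices, and a routine enumeration for $n\le 4$ producing $C_4$ and the seven forests). The paper instead keeps $n$ unbounded and rules out large examples analytically: for a connected, non-cycle, triangle-free graph on $n\ge 5$ vertices it combines Brooks' theorem with the bound $\alpha\ge n/\chi$ to force an independent set of size $3$, contradicting Lucido's Three Primes Lemma, and then disposes of long cycles and disconnected graphs separately before enumerating the small forests. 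Your Ramsey argument is arguably cleaner and handles the connected/disconnected/cycle cases uniformly, while the paper's argument avoids quoting Ramsey numbers and exhibits the quantitative growth of the independence number; both end with the same small-case bookkeeping and both use Theorem \ref{PartialConverse} for realizability (your observation that $\overline{F}$ is automatically $3$-colorable, being triangle-free on at most five vertices, matches the paper's "easily verifiable" step). One small attribution point: Theorem \ref{PartialConverse} only gives the converse direction (realizability); the exclusion direction you use --- that a solvable prime graph has no independent set of size $3$, equivalently $\overline{F}$ is triangle-free --- is Lucido's Three Primes Lemma (or the forward half of Theorem \ref{MainTheorem}), which is what the paper cites, so you should quote that rather than stating Theorem \ref{PartialConverse} as an "if and only if."
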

\begingroup
\setlength\intextsep{0pt}
\begin{wrapfigure}[10]{r}{0.5\textwidth}
\begin{center}
	\begin{tikzpicture}[scale=0.7]
		\vertex (a) at (-2.5,2.5) {};

		\vertex (b) at (-0.707107, 1.79289) {};
		\vertex (c) at (-0.707107, 3.20711) {};
		\vertex (d) at (0.707107, 1.79289) {};
		\vertex (e) at (0.707107, 3.20711) {};

		\vertex (f) at (1.79289, 1.79289) {};
		\vertex (g) at (1.79289, 3.20711) {};
		\vertex (h) at (3.20711, 1.79289) {};
		\vertex (i) at (3.20711, 3.20711) {};
		\path
			(b) edge (d)
			(c) edge (e)
			(f) edge (h)
			(g) edge (f)
			(g) edge (i);
		
		\vertex (j) at (-3.20711,0) {};
		\vertex (k) at (-1.79289,0) {};		
		
		\vertex (l) at (4.29289, -0.418249) {};
		\vertex (m) at (5.707107, -0.418249) {};
		\vertex (n) at (5., 0.806497) {};

		\vertex (o) at (1.79289, -0.418249) {};
		\vertex (p) at (3.20711, -0.418249) {};
		\vertex (q) at (2.5, 0.806497) {};
		\path
			(l) edge (n)
			(o) edge (p)
			(n) edge (m);
	
		\vertex (r) at (-0.70711,0) {};
		\vertex (s) at (0.70711,0) {};		

		\vertex (t) at (4.29289, 3.20711) {};
		\vertex (u) at (4.29289, 1.79289) {};
		\vertex (v) at (5.70711, 1.79289) {};
		\vertex (w) at (5.70711, 3.20711) {};
			
		\vertex (x) at (7.5, 2.15) {};
		\vertex (y) at (6.64405, 1.52812) {};
		\vertex (z) at (6.97099, 0.521885) {};
		\vertex (aa) at (8.02901, 0.521885) {};
		\vertex (bb) at (8.35595,1.52812) {};
		\path
			(r) edge (s)
			(t) edge (w)
			(t) edge (u)
			(u) edge (v)
			(v) edge (w)
			(x) edge (y)
			(y) edge (z)
			(z) edge (aa)
			(aa) edge (bb)
			(bb) edge (x);
		\end{tikzpicture}
\end{center}
\caption{Exceptions to Corollary \ref{Girf}.}
\end{wrapfigure}
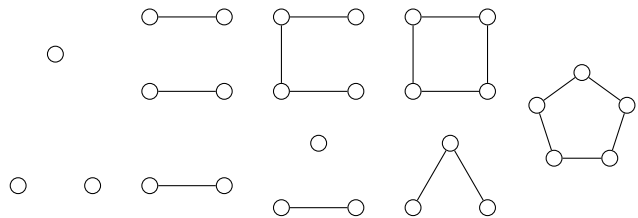{
\noindent\emph{Proof.} 
It is easily verifiable that each of the exceptional cases listed above have triangle free and $3$-colorable complements, so by Theorem \ref{PartialConverse} each can be realized as the prime graph of some solvable group.  Thus it remains to be shown that these exceptions are the only such prime graphs with girth not equal to $3$. 

Suppose that $\Gamma$ is a connected non-cycle triangle-free graph on $n\geq 5$ vertices with independence number $\alpha$, chromatic number $\chi$, and maximum vertex degree $\Delta$.  By Brooks' Theorem \cite{Brooks}, we have $\chi\leq \Delta$, whence \[\alpha \geq \frac{n}{\chi}\geq \frac{n}{\Delta} > \sqrt{n}.\]
Thus $2 < \sqrt{n} < \alpha$.  It follows from Lucido's Three Primes Lemma that $\Gamma$ cannot be the prime graph of a solvable group.  Likewise, any disconnected triangle free graph on $5$ or more vertices necessarily contains an independent set of size $3$, as does any $m$-cycle for $m\geq 6$.  One can easily verify by exhaustion that the $7$ forests pictured above are exactly those forests on $4$ or fewer vertices with independence number less than $3$.  This completes the proof.\hfill \qed}
\endgroup

We conclude the section with a classification theorem, combining Corollary \ref{3Pcor} and Theorem \ref{PartialConverse} into the following practical form.

\begin{theoremN}\label{MainTheorem}
A graph $F$ is isomorphic to the prime graph of some solvable group if and only if its complement $\overline{F}$ is $3$-colorable and triangle-free.
\end{theoremN}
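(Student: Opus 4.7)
The plan is to derive Theorem \ref{MainTheorem} by combining the two big results already in hand: Theorem \ref{PartialConverse} supplies one direction, and Corollary \ref{3Pcor} together with Lucido's Three Primes Lemma supplies the other.

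The reverse implication, that every graph $F$ whose complement is $3$-colorable and triangle-free is realized as the prime graph of some solvable group, is literally the first sentence of Theorem \ref{PartialConverse}, so nothing needs to be added there.

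For the forward implication, suppose $F = \PG$ for some solvable group $G$. Triangle-freeness of $\overline{F}$ is immediate from Lucido's Three Primes Lemma: a triangle in $\overline{\PG}$ would be an independent set of size $3$ in $\PG$, i.e., three primes in $\pi(G)$ no two of which are the order of an element of $G$, which the lemma forbids. For $3$-colorability, the plan is to color $\overline{\PG}$ using the structure of the Frobenius digraph $\FG$, whose underlying undirected graph is precisely $\overline{\PG}$. Mirroring the partition used in the proof of Theorem \ref{PartialConverse}, define $\OO$ to be the vertices of $\FG$ with in-degree zero and positive out-degree, $\DD$ the vertices of positive in- and out-degree, and $\II$ the vertices of out-degree zero (which sweeps up any vertex isolated in $\overline{\PG}$). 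These three sets partition $\pi(G)$, so assigning one color to each produces a candidate $3$-coloring of $\overline{\PG}$.

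It remains to verify that this candidate coloring is proper. Let $pq$ be an edge of $\overline{\PG}$ and, without loss of generality, suppose $p \rightarrow q$ in $\FG$. Then $p$ has positive out-degree, so $p \in \OO \cup \DD$, and $q$ has positive in-degree, so $q \in \DD \cup \II$. The only way $p$ and $q$ could fall in the same class is $p, q \in \DD$; but then $p$ has some in-neighbor $s$ and $q$ has some out-neighbor $t$, yielding a directed $3$-path $s \rightarrow p \rightarrow q \rightarrow t$ in $\FG$, contradicting Corollary \ref{3Pcor}. In all other cases $p$ and $q$ lie in distinct classes, so the coloring is proper and $\overline{\PG}$ is $3$-colorable. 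The heavy lifting, namely the absence of directed $3$-paths in $\FG$, is already done by Corollary \ref{3Pcor}, so no further group-theoretic obstacle remains; the rest of the argument is bookkeeping.
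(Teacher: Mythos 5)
Your proof is correct, and its overall skeleton matches the paper's: the converse is delegated to Theorem \ref{PartialConverse}, triangle-freeness of $\overline{\PG}$ comes from Lucido's Three Primes Lemma, and $3$-colorability is extracted from the absence of directed $3$-paths in $\FG$ (Corollary \ref{3Pcor}). The one genuine difference is how that last step is carried out: the paper simply cites the Gallai--Roy theorem, which bounds the chromatic number by one plus the length of a longest directed path in any orientation, whereas you prove the needed special case by hand, exhibiting the explicit coloring by the classes $\OO$, $\DD$, $\II$ determined by in- and out-degrees. This is exactly the partition the paper itself uses in the proof of Theorem \ref{PartialConverse} and again in Section 3, so your argument has the virtue of being self-contained and of making the later use of that partition feel inevitable, at the cost of a short verification the paper outsources to a standard reference. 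One small point you should make explicit: when $p,q\in\DD$ and $p\rightarrow q$, the walk $s\rightarrow p\rightarrow q\rightarrow t$ is only a contradiction to Corollary \ref{3Pcor} if its four vertices are distinct. This is easily ensured: $s\neq q$ and $t\neq p$ because $\FG$ is an orientation of the simple graph $\overline{\PG}$ (no edge carries both directions), and $s\neq t$ because otherwise $s,p,q$ would span a triangle in $\overline{\PG}$, contradicting the triangle-freeness you have already established; with that sentence added, the argument is complete.
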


\begin{proof}
If $F = \PG$ is the prime graph of some solvable group, then by Corollary \ref{3Pcor}, $\FG$ does not contain a $3$-path.  Thus by the Gallai-Roy theorem\cite[Thm.~7.17]{Gallai}, $\overline{\PG}$ is $3$-colorable.  Also, $\overline{\PG}$ is triangle-free by Lucido's Three Primes Lemma.  The converse is given by Theorem \ref{PartialConverse}.
\end{proof}

\section{Minimal Prime Graphs.}

For the remainder of the paper, we present an extended application of Theorem \ref{MainTheorem}, which demonstrates how graph theoretic properties can influence the structure of solvable groups.   

We first introduce a graph theoretic property we call \emph{minimal}.  Minimality was first observed as a property of the $5$-cycle while studying the exceptions to Corollary \ref{Girf}.  Solvable groups whose prime graphs are isomorphic to $5$-cycles were discovered to have certain group theoretic properties; in particular, these groups have Fitting length $3$.  We prove this in Section $4$.  We anticipate that this bound generalizes as a result of minimality, and in fact we find in Section $5$ that all solvable groups with minimal prime graphs have Fitting length $3$ or $4$.  This result reminds us of Lucido's similar conclusion in \cite[Prop.~3]{LucidoTree} concerning solvable groups with prime graphs of diameter $3$.

In this section, we outline some foundational results about minimality, culminating with the observation that a group with a minimal prime graph adheres to a conjectured bound on the number of prime divisors in the orders of its elements.

\begin{definition} If $G$ is a finite solvable group and $\PG$ satisfies \begin{enumerate}
\item[(a)] $|V(\PG)|>1$,
\item[(b)] $\PG$ is connected,
\item[(c)] $\PG\setminus \{pq\}$ is not the prime graph of any solvable group for any $p,q\in \PG$,
\end{enumerate}
then we say that $\PG$ is \emph{minimal}.
\end{definition}

One can easily verify that the $5$-cycle is the smallest minimal prime graph.  We now show that any minimal prime graph may be used to construct a new minimal prime graph of greater order.  \begin{definition} A \emph{linked vertex duplication} of a vertex $v$ in a graph $\Gamma$ is the graph constructed by adding a new vertex $u$ and a new edge $uv \in \Gamma$ such that $N^1(u) \setminus \{v\} = N^1(v) \setminus \{ u \}$.
\end{definition}

\begin{propositionN}\label{vertexduplication}
The family of minimal prime graphs is closed under linked vertex duplication.
\end{propositionN}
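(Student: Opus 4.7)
My plan is to apply Theorem \ref{MainTheorem} throughout, reducing every question of the form ``is (or is not) a prime graph'' to triangle-freeness and $3$-colorability of the complement. Fix a minimal prime graph $\PG$ and let $\Gamma'$ denote the linked vertex duplication at $v\in V(\PG)$, so the new vertex $u$ is adjacent in $\Gamma'$ to $v$ and to every neighbor of $v$ in $\PG$. In the complement $\overline{\Gamma'}$, the vertices $u$ and $v$ are non-adjacent with the same neighborhood $N^1_{\overline{\PG}}(v)$.

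First I would verify that $\Gamma'$ is a prime graph. Any triangle in $\overline{\Gamma'}$ must avoid the non-edge $uv$, so it either lies entirely in $\overline{\PG}$ (impossible by Lucido's Three Primes Lemma applied to $\PG$) or contains $u$ together with two of its neighbors $w_1,w_2\in N^1_{\overline{\PG}}(v)$, producing the forbidden triangle $\{v,w_1,w_2\}$ in $\overline{\PG}$. A $3$-coloring of $\overline{\PG}$ extends to $\overline{\Gamma'}$ by assigning $u$ the color of $v$. Conditions (a) and (b) of minimality are immediate, so the substance of the proof is condition (c): for every edge $pq\in\Gamma'$, show $\Gamma'\setminus\{pq\}$ is not a prime graph. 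I would split into three cases according to whether $pq\in E(\PG)$, $pq=uv$, or $pq=uw$ for some $w\in N^1(v)$ in $\PG$.

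For $pq\in E(\PG)$, the induced subgraph of $\overline{\Gamma'\setminus\{pq\}}$ on $V(\PG)$ is exactly $\overline{\PG}\cup\{pq\}$; by minimality of $\PG$ this contains a triangle or has chromatic number at least $4$, and the obstruction is inherited by the supergraph. For $pq=uv$, a short preliminary argument shows that minimality of $\PG$ forces $v$ to have a complement-neighbor $w$ in $\PG$: otherwise $v$ would be universal in $\PG$, and then $\overline{\PG}\cup\{vw\}$ would remain triangle-free and $3$-colorable for any neighbor $w$ of $v$ (since $v$ is isolated in $\overline{\PG}$), contradicting the minimality of $\PG$. Given such $w$, the triangle $\{u,v,w\}$ appears in $\overline{\Gamma'\setminus\{uv\}}$ because $u$ inherits the complement-neighborhood of $v$.

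The main obstacle is the case $pq=uw$ with $w\in N^1(v)$, since this edge does not lie in $\PG$ and so cannot be inherited directly from minimality on $pq$. Here I would apply the minimality of $\PG$ to the edge $vw\in\PG$ instead: the graph $\overline{\PG}\cup\{vw\}$ has a triangle or is not $3$-colorable. In the triangle case, a witness $y\in N^1_{\overline{\PG}}(v)\cap N^1_{\overline{\PG}}(w)$ yields the triangle $\{u,w,y\}$ in $\overline{\Gamma'\setminus\{uw\}}$, using that $u$ is complement-adjacent to $y$ (through $v$'s neighborhood) and to $w$ (through the removed edge). The non-$3$-colorable case requires a recoloring argument: given any $3$-coloring $c$ of $\overline{\Gamma'\setminus\{uw\}}$, the color $c(u)$ differs from the colors of all complement-neighbors of $v$ (these are neighbors of $u$) and from $c(w)$ (since $uw$ is a complement edge), so replacing $c(v)$ by $c(u)$ produces a valid $3$-coloring of $\overline{\PG}\cup\{vw\}$, contradicting the assumption and completing the case.
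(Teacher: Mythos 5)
Your proof is correct, and it takes a genuinely more hands-on route than the paper for the crucial case. The paper's proof is built on a symmetry observation: the transposition swapping the duplicated vertex $u$ with $v$ is an automorphism of the new graph, so deleting $v$ recovers a graph isomorphic to the original minimal graph; minimality then transfers at once to every edge $ux$ with $x\neq v$ (removing $ux$ leaves an induced subgraph isomorphic to the original minus $vx$, and the obstruction — a complement triangle or chromatic number exceeding $3$ — passes to supergraphs), and the edge $uv$ is handled exactly as you do, by producing an independent set $\{u,v,x\}$ of size $3$ after its removal (which implicitly uses the fact you re-derive inline, that $v$ cannot be isolated in the complement; the paper records this separately as Lemma \ref{no singletons}). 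You avoid the automorphism entirely and instead attack the edges $uw$, $w\in N^1(v)$, directly through Theorem \ref{MainTheorem}: minimality at $vw$ gives either a complement triangle, which you transport to $u$ via the shared complement-neighborhood, or non-$3$-colorability, which you handle by the recoloring that replaces $c(v)$ with $c(u)$. That recoloring step is in effect the coloring-level shadow of the paper's automorphism argument, so the two proofs are close in spirit; what your version buys is complete explicitness (including the inheritance argument for edges of the original graph, which the paper leaves implicit), at the cost of a longer case analysis, while the paper's symmetry trick collapses the $uw$ cases into a single appeal to minimality of the smaller graph.
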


\begin{proof}
Let $\Gamma'$ be a minimal prime graph, and take $\Gamma$ to be the linked duplication of a vertex $v \in \Gamma'$.  We denote the added vertex to $\Gamma$ by $u$.

Let $\sigma : \Gamma \rightarrow \Gamma$ be the transposition permuting $u$ and $v$ and fixing all other vertices.  Since $N^1 (u) \setminus \{ v \} = N^1 (v) \setminus \{ u \}$, it follows that $\Gamma$ is the prime graph of some solvable group.  Furthermore, we have that $\sigma \in \Aut\Gamma$, so $\Gamma - v$ is isomorphic to $\Gamma'$.  Thus, by minimality of $\Gamma'$, we know $(\Gamma - v) - ux$ is not the prime graph of a solvable group for any $x \in N^1 (u)\setminus\{v\}$.  Furthermore, there exists an $x \in \Gamma$ so that $vx,ux\notin \Gamma$ and thus $\Gamma-uv$ contains an independent set of size $3$.  Therefore, $\Gamma$ is minimal.
\end{proof}

By starting with the $5$-cycle and repeatedly taking linked vertex duplications, the reader may produce many examples of minimal prime graphs.  It is important to note, however, that not all minimal can be obtained this way. One example is the Gr\"{o}tzsch graph with precisely one edge removed.  (The reader should note that the Gr\"{o}tzsch graph is otherwise known as the Mycielski graph of order $4$, or the triangle-free graph with chromatic number $4$ with the smallest number of vertices\cite{Mycielski}.)

Intuitively, groups with minimal prime graphs contain as many fixed-point-free actions as possible in a solvable group that is neither Frobenius nor $2$-Frobenius, as their Frobenius digraphs are saturated with arrows.  This rigid group structure causes their prime graphs to be somewhat well behaved.   We see that if a group $G$ has a minimal prime graph, then for all subgroups $K \leqslant G$, we have $\PrG{K} = \PG$ if and only if $V (\PrG{K}) = V (\PG)$.  Similarly, for all normal subgroups $N\unlhd G$, we have $\PrG{G/N} = \PG$ if and only if $V (\PrG{G/N}) = V (\PG)$.
\pagebreak
\begin{lemmaN}\label{not2colorable}
Suppose $G$ is a solvable group.  If $\PG$ is minimal, then $\overline{\PG}$ is not $2$-colorable.
\end{lemmaN}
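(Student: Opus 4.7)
The plan is to prove the contrapositive: if $\overline{\PG}$ is $2$-colorable, then $\PG$ cannot be minimal. The main tool is Theorem \ref{MainTheorem}, so the strategy is to locate an edge of $\PG$ whose removal leaves a graph whose complement is still triangle-free and $3$-colorable, and then invoke the theorem to violate clause (c) of the definition of minimal.

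Assume $\overline{\PG}$ admits a $2$-coloring and write $V(\PG) = A \cup B$ for the corresponding partition into independent sets of $\overline{\PG}$; equivalently, $A$ and $B$ are cliques of $\PG$. First handle the degenerate subcase in which one of the parts (say $B$) is empty: then $\PG$ is a complete graph on at least two vertices (using clause (a)), and deleting any edge yields a graph whose complement is a single edge --- plainly triangle-free and $3$-colorable. Theorem \ref{MainTheorem} then contradicts clause (c).

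In the remaining case both $A$ and $B$ are nonempty, so the connectedness in clause (b) forces at least one edge $pq \in \PG$ with $p \in A$ and $q \in B$. I remove $pq$ from $\PG$; its complement becomes $\overline{\PG} \cup \{pq\}$, where the added edge still runs between the two parts. Hence the bipartition of $\overline{\PG}$ persists, so $\overline{\PG \setminus \{pq\}}$ is still $2$-colorable and, in particular, $3$-colorable. Triangle-freeness is also preserved, because any new triangle would have to use the edge $pq$, and hence would need a common neighbor $r$ of $p$ and $q$ in $\overline{\PG}$, which would force $r \in A \cap B = \emptyset$. Theorem \ref{MainTheorem} then says $\PG \setminus \{pq\}$ is itself the prime graph of some solvable group, contradicting minimality.

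I anticipate no serious obstacle here: the argument is a short structural observation once the crossing edge $pq$ is selected, and the only delicate point is to be sure clauses (a)--(c) of minimality are invoked to dispose of the degenerate ``one empty part'' configuration and to guarantee the existence of a cross edge.
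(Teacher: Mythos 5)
Your proposal is correct and follows essentially the same route as the paper: use connectedness to find an edge of $\PG$ crossing the two color classes, remove it, note the complement stays bipartite (hence triangle-free and $3$-colorable), and invoke the characterization theorem to contradict minimality. Your explicit treatment of the degenerate one-class case and the common-neighbor check for triangle-freeness are just slightly more detailed versions of what the paper leaves implicit.
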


\begin{proof}
Suppose that $\overline\PG$ is bipartite.  Since minimal prime graphs are connected, there exists at least one non-edge between the color classes.  Removing this edge from $\PG$ yields a graph whose with a bipartite, triangle-free complement, which contradicts the minimality of $\PG$.
\end{proof}

For the remainder of the paper, we fix the following notation.  Partition the vertices of $\FG$ into three sets $\mathcal{O}$, $\mathcal{D}$, and $\mathcal{I}$, where vertices in $\mathcal{O}$ have zero in-degree in $\FG$ (i.e., $p \in \mathcal {O}$ if and only if $N_\uparrow^1 (p)$ is empty), vertices in $\mathcal{D}$ have non-zero in- and out-degrees (i.e., $p \in \mathcal {D}$ if and only if both $N_\uparrow^1 (p)$ and $N_\downarrow^1 (p)$ are nonempty), and vertices in $\mathcal{I}$ have zero out-degree (i.e. $p \in \mathcal {I}$ if and only if $N_\downarrow^1 (p)$ is empty).  The following lemma shows that these sets actually form a partition.

\begin{lemmaN}\label{no singletons}
Let $G$ be a solvable group.  If $\PG$ is a minimal graph, then $\overline{\PG}$ contains no singleton vertices.
\end{lemmaN}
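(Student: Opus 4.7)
The plan is to argue by contradiction using Theorem \ref{MainTheorem}. Suppose some prime $p \in V(\PG)$ is a singleton vertex of $\overline{\PG}$, i.e., $p$ has no neighbors in $\overline{\PG}$. Then $p$ is adjacent in $\PG$ to every other prime. Since $|V(\PG)| > 1$, I can pick a vertex $q \neq p$, and $pq \in \PG$. I will show that $\PG \setminus \{pq\}$ is itself the prime graph of some solvable group, contradicting the minimality of $\PG$.

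By Theorem \ref{MainTheorem}, it suffices to verify that $\overline{\PG \setminus \{pq\}}$, which equals $\overline{\PG}$ with the extra edge $pq$ added, is both triangle-free and $3$-colorable. Triangle-freeness follows immediately from the hypothesis on $p$: any triangle in $\overline{\PG} + pq$ must either lie entirely in $\overline{\PG}$ (impossible, since $\PG$ is the prime graph of a solvable group and so $\overline{\PG}$ is triangle-free by Lucido's Three Primes Lemma) or it uses $p$ as a vertex, in which case $p$ would need two edges in $\overline{\PG} + pq$; but the only edge incident to $p$ in the new graph is $pq$.

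For $3$-colorability, fix any proper $3$-coloring of $\overline{\PG}$, which exists by Theorem \ref{MainTheorem}. Because $p$ is isolated in $\overline{\PG}$, its color may be reassigned to any of the three colors without violating any constraint in $\overline{\PG}$. Simply recolor $p$ to a color different from the color of $q$; the resulting assignment is a proper $3$-coloring of $\overline{\PG} + pq$.

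Combining these two observations with Theorem \ref{MainTheorem} yields that $\PG \setminus \{pq\}$ is realized as the prime graph of some solvable group, contradicting minimality. There is no real obstacle here beyond making sure the singleton hypothesis is invoked correctly in both places: it kills potential triangles through $p$, and it makes $p$'s color reassignable — both uses of the same structural fact that $p$ has no non-edges in $\PG$.
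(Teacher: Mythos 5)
Your proof is correct and is essentially the paper's argument, just written out in full detail: the paper likewise observes that a singleton vertex of $\overline{\PG}$ can be joined to any other vertex without creating a triangle or increasing the chromatic number, so minimality is violated via Theorem \ref{MainTheorem}. Both uses of the singleton hypothesis (killing triangles through $p$ and allowing $p$ to be recolored) are exactly the content of the paper's one-line proof.
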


\begin{proof}
Any singleton vertex in $\overline\PG$ may be connected to any other vertex without creating a triangle or increasing the chromatic number of $\overline\PG$, so a minimal prime graph contains no singleton vertices.
\end{proof}

It follows from Lemma \ref{no singletons} that $\mathcal{O}$, $\mathcal{D}$ and $\mathcal{I}$ are pairwise disjoint, and by Lemma \ref{not2colorable} each must be nonempty.  In particular, these sets provide a $3$-coloring of $\overline{\PG}$, and it is this $3$-coloring we mean when we refer to a $3$-coloring of $\overline{\PG}$.
\vspace{5pt}
\begin{figure}[H]
\begin{center}
\begin{tikzpicture}[%
    auto,
    block/.style={
      rectangle,
      draw=blue,
      thick,
      fill=blue!20,
      text width=5em,
      align=center,
      rounded corners,
      minimum height=2em
    },
    block1/.style={
      rectangle,
      draw=blue,
      thick,
      fill=blue!20,
      text width=5em,
      align=center,
      rounded corners,
      minimum height=2em
    },
    line/.style={
      draw,thick,
      -latex',
      shorten >=2pt
    },
    cloud/.style={
      draw=red,
      thick,
      ellipse,
      fill=red!20,
      minimum height=1em
    }
  ]
    \node (p_1) at ( -1.425, 0) {$p_1$}; 
    \node (p_2) at ( 0.75, -1.5) {$p_2$};
    \node (p_3) at ( 0.55,1.5) {$p_3$};
    \node (p_4) at ( -2.1,1.5) {$p_4$};
    \node (p_5) at ( -1.35,-1.5) {$p_5$};
    \node (p_6) at ( 2.125,-1.5) {$p_6$};
    \node (p_7) at ( 1.7,0) {$p_7$};
    \node (p_8) at ( -1.05,1.5) {$p_8$};
    \node (p_9) at ( -0.011,0) {$p_9$};
    \node (p_{10}) at ( 1.82,1.5) {$p_{10}$};
    \node (p_{11}) at ( -2.75,-1.5) {$p_{11}$};
    \node (PG) at (-8.75705, -2.70645) {$\overline{\PG}$};
    \node (FG) at (-0.158273, -2.70645) {$\FG$};
    
    \node (p1) at ( -8.528, -0.724 ) {$p_1$}; 
    \node (p2) at ( -6.95, -0.441) {$p_2$};
    \node (p3) at ( -9.7824, 1.0862) {$p_3$};
    \node (p4) at ( -8.528, 0.724) {$p_4$};
    \node (p5) at ( -9.7824, -1.0862) {$p_5$};
    \node (p6) at ( -10.028, 0 ) {$p_6$};
    \node (p7) at ( -7.7894, 1.4742 ) {$p_7$};
    \node (p8) at ( -10.65, -1.9662 ) {$p_8$};
    \node (p9) at ( -10.65, 1.9662 ) {$p_9$};
    \node (p10) at (-7.7894, -1.4742 ) {$p_{10}$};
    \node (p11) at (-6.95, 0.441) {$p_{11}$};
    \node (O) at (3.373,1.5) {$\mathcal{O}$};
    \node (D) at (3.373,0) {$\mathcal{D}$};
    \node (I) at (3.373,-1.5) {$\mathcal{I}$};
   
    \begin{scope}[every path/.style={->}]
       \draw (p_4) -- (p_{11});
       \draw (p_4) -- (p_5);
       \draw (p_4) -- (p_2);
       \draw (p_4) -- (p_9);
       \draw (p_8) -- (p_1);
       \draw (p_8) -- (p_5);
       \draw (p_8) -- (p_6);
       \draw (p_3) -- (p_1);
       \draw (p_3) -- (p_5);
       \draw (p_3) -- (p_9);
       \draw (p_3) -- (p_7); 
       \draw (p_{10}) -- (p_5);      
       \draw (p_{10}) -- (p_2);      
       \draw (p_{10}) -- (p_6);      
       \draw (p_{10}) -- (p_{11});    
       \end{scope}
       \begin{scope}[every path/.style={-}]
       \draw (p_1) -- (p_{11}); 
       \draw (p_1) -- (p_2);
       \draw (p_9) -- (p_6);
       \draw (p_7) -- (p_6);
       \draw (p_7) -- (p_2);
       \draw (p_7) -- (p_{11});      
       \draw (p4) -- (p11);
       \draw (p4) -- (p5);
       \draw (p4) -- (p2);
       \draw (p4) -- (p9);
       \draw (p8) -- (p1);
       \draw (p8) -- (p5);
       \draw (p8) -- (p6);
       \draw (p3) -- (p1);
       \draw (p3) -- (p5);
       \draw (p3) -- (p9);
       \draw (p3) -- (p7); 
       \draw (p10) -- (p5);      
       \draw (p10) -- (p2);      
       \draw (p10) -- (p6);      
       \draw (p10) -- (p11);      
       \draw (p1) -- (p11); 
       \draw (p1) -- (p2);
       \draw (p9) -- (p6);
       \draw (p7) -- (p6);
       \draw (p7) -- (p2);
       \draw (p7) -- (p11);
       \end{scope}      
       \draw[thick,dotted]     (-3.45,-1) rectangle (2.873,-1.885);   
       \draw[thick,dotted]     (-3.45,0.485) rectangle (2.873,-0.4175);
       \draw[thick,dotted]     (-3.45,1.95) rectangle (2.873,0.9775);  
\end{tikzpicture}
\caption{An example of $3$-coloring the complement of a minimal prime graph.}
\end{center}
\end{figure}
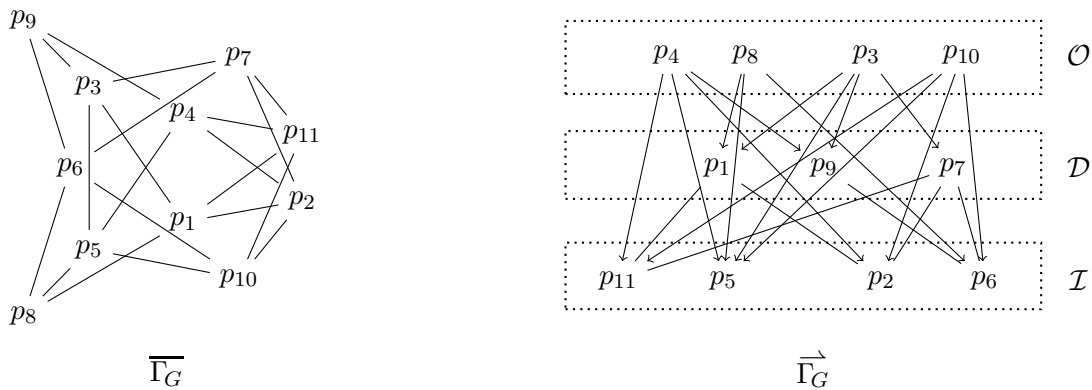

\vspace{-5pt}
We proceed with a technical lemma concerning the formation $q \leftarrow r \rightarrow p$ in $\FG$.

\begin{lemmaN} \label{Frob sub1}
Let $G$ be a solvable group and $p$, $q$, and $r$ be distinct primes dividing $|G|$.  Suppose that a Hall $\{p, r\}$-subgroup of $G$ is a Frobenius group whose Frobenius kernel is a Sylow $p$-subgroup of $G$.  Suppose additionally that $r \rightarrow q$ in $\FG$.  Then a Hall $\{p,q,r\}$-subgroup of $G$ is a Frobenius group whose Frobenius kernel is a Hall $\{p, q\}$-subgroup of $G$.  In particular, some Sylow $p$-subgroup and some Sylow $q$-subgroup of $G$ centralize each other.
\end{lemmaN}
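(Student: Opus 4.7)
My plan is to analyze a Hall $\{p,q,r\}$-subgroup $H = H_{pqr}$ of $G$ (which exists since $G$ is solvable). Since $r \rightarrow p$ and $r \rightarrow q$ in $\FG$ force $pr, qr \notin \PG$, Lucido's Three Primes Lemma requires $pq \in \PG$. Hence $\PrG{H}$ has precisely the two components $\{p, q\}$ and $\{r\}$, so by \cite[Thm.~A]{Williams} $H$ is Frobenius or $2$-Frobenius. The goal is to show $H$ is Frobenius with kernel a Hall $\{p,q\}$-subgroup; the ``in particular'' statement will then follow because Frobenius kernels are nilpotent, so the Hall $\{p,q\}$-kernel is a direct product of Sylow $p$- and $q$-subgroups that centralize each other.

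In the Frobenius case, the two components split as kernel and complement; if the kernel were the Sylow $r$-subgroup $R$, a Sylow $p$-subgroup in the complement would act fixed-point-freely on $R$, giving a Hall $\{p,r\}$-subgroup that is Frobenius with kernel $R$ and contradicting the hypothesis. So the kernel must be a Hall $\{p,q\}$-subgroup.

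To rule out the $2$-Frobenius case, write $F_1 = \Fit{H}$ and let $F_2$ denote the preimage in $H$ of $\Fit{H/F_1}$; then $\pi(F_2/F_1)$ forms one complete component of $\PrG{H}$, so either $\pi(F_2/F_1) = \{p, q\}$ or $\pi(F_2/F_1) = \{r\}$. In the first subcase, $F_1$ and $H/F_2$ are $r$-groups; letting $P'$ be a Sylow $p$-subgroup of $F_2$, direct inspection of $H_{pr} \cap F_2 = F_1 P'$ (Frobenius with kernel $F_1$) and $H_{pr}/F_1 \cong P' \rtimes \bar{C}_0$ (Frobenius with kernel $P'$, derived from the Frobenius structure of $H/F_1$) shows that $H_{pr}$ is itself $2$-Frobenius of type $(r, p, r)$ --- in particular, $H_{pr}$ is not a Frobenius group, contradicting the hypothesis. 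In the second subcase, $R$ is cyclic, lies in $F_2$, and acts fixed-point-freely on the nilpotent $\{p,q\}$-group $F_1$; if the Sylow $p$-subgroup $\bar{P}_2$ of the cyclic $\{p,q\}$-group $H/F_2$ is nontrivial, a parallel analysis shows that $H_{pr}$ is $2$-Frobenius of type $(p, r, p)$ (or simply Frobenius with kernel $R$ if the Sylow $p$-part of $F_1$ is trivial), orienting $p \rightarrow r$ in $\FG$ and contradicting $r \rightarrow p$. If $\bar{P}_2 = 1$, the symmetric analysis on $H_{qr}$ forces either $\bar{Q}_2 \neq 1$ (making $H_{qr}$ exhibit the orientation $q \rightarrow r$, contradicting $r \rightarrow q$) or $\bar{Q}_2 = 1$, which makes $H/F_2 = 1$ and contradicts $H$ being $2$-Frobenius.

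Thus $H$ is Frobenius with kernel a Hall $\{p,q\}$-subgroup $K$; since $K$ is nilpotent it decomposes as $P \times Q$ for some Sylow $p$-subgroup $P$ and Sylow $q$-subgroup $Q$ of $G$, which centralize each other as required. The main obstacle is the subcase $\pi(F_2/F_1) = \{r\}$, where the hypotheses on $H_{pr}$ and $H_{qr}$ do not individually force a contradiction; one must play the constraints on both Hall subgroups off against the non-triviality of $H/F_2$ to close the argument.
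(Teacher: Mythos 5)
Your proposal is correct, but it takes a genuinely different route from the paper. The paper fixes a Sylow system $P$, $Q$, $R$ with $PQ$, $PR$, $QR$ subgroups, notes that the hypothesis makes $R$ a Frobenius complement, and then invokes Corollary \ref{type (p,q,p)} to rule out $QR$ being $2$-Frobenius of type $(r,q,r)$ (a Sylow subgroup of such a group cannot be a Frobenius complement); hence $QR$ is Frobenius with kernel $Q$, and the paper then asserts that $PQR$ is Frobenius with kernel $PQ$. You instead attack $H_{pqr}$ head-on: after identifying its prime graph components, you use \cite[Thm.~A]{Williams} and run a case analysis on the Fitting series in the $2$-Frobenius case, showing in each configuration that the induced Hall $\{p,r\}$- or $\{q,r\}$-subgroup would be Frobenius with kernel $R$ or $2$-Frobenius oriented the wrong way, contradicting the hypotheses (here you correctly use conjugacy of Hall subgroups in solvable groups and the well-definedness of the orientation). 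Your case analysis is sound, including the delicate subcase $\pi(F_2/F_1)=\{r\}$ with $\bar P_2=1$, where playing $H_{qr}$ against the nontriviality of $H/F_2$ closes the argument. What the paper's route buys is brevity, by reusing its earlier structural lemmas about Frobenius complements and type-$(p,q,p)$ groups; what your route buys is that it explicitly excludes the $2$-Frobenius possibility for $H_{pqr}$ and thereby supplies the justification for the paper's final ``it is not difficult to see'' step, at the cost of re-deriving by hand some facts the paper has already packaged (your ``direct inspection''/``parallel analysis'' steps, e.g.\ that the exhibited normal series forces $H_{pr}$ not to be Frobenius, are true and routine but should be written out in a final version).
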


\begin{proof}
Let $P$, $Q$, and $R$ be Sylow $p$-, $q$-, and $r$-subgroups, respectively, of $G$ so that $PQ$, $PR$, and $QR$ are subgroups.  We know that $PR$ is a Frobenius group with Frobenius kernel $P$.  Thus, $R$ is a Frobenius complement.  Also, $QR$ is either a Frobenius group or a $2$-Frobenius group of type $(r,q,r)$, but by Corollary \ref{type (p,q,p)}, it cannot be $2$-Frobenius.  Thus, $QR$ is a Frobenius group with Frobenius kernel $Q$.  It is not difficult to see that this implies that $PQR$ is a subgroup and in fact, it is a Frobenius group with Frobenius kernel $PQ$.  Since a Frobenius kernel is nilpotent, this implies that $P$ and $Q$ centralize each other.
\end{proof}

Next, we define the sets $\Pi = \{ p \in \FG : N_\uparrow^2 (p) \not= \emptyset\}$.  We must also introduce the binary octahedral group $2O := \langle r,s,t \mid r^2=s^3=t^4=rst \rangle$.  The group $2O$ is known under several guises.  It has order $48$, and it is the nonsplit extension of ${\rm SL}_2 (3)$ by a cyclic group of order $2$.  In this paper, we see $2O$ occur as a Frobenius complement.

\begin{propositionN}\label{HPiNilpotent}
Suppose $G$ is a solvable group.  If $\PG$ is minimal, then $\Pi \subseteq \mathcal {I}$ and $H_\Pi \leqslant \Fit{G}$.
\end{propositionN}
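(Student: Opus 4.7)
My plan is three-staged: first prove $\Pi \subseteq \mathcal{I}$ (a direct consequence of Corollary \ref{3Pcor}), then reduce $H_\Pi \leq \Fit{G}$ to per-$p$ normality in $G$, and finally prove that normality by a case analysis on the $\FG$-relation of each other prime to $p$, combined with a Sylow system argument.

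For $\Pi \subseteq \mathcal{I}$: if $p \in \Pi$ has a 2-path $r \rightarrow q \rightarrow p$ and also an outgoing arrow $p \rightarrow s$, then the concatenation $r \rightarrow q \rightarrow p \rightarrow s$ is a directed 3-path, contradicting Corollary \ref{3Pcor}. So $p$ has out-degree $0$, i.e., $p \in \mathcal{I}$. For the reduction, assume $P \trianglelefteq G$ for every $p \in \Pi$. Distinct $p, p' \in \Pi$ are both sinks, so no $\FG$-edge joins them, whence $pp' \in \PG$ (an element of order $pp'$ exists in $G$); since $P, P'$ are coprime normal subgroups, $[P, P'] \leq P \cap P' = 1$, so they commute. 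Iterating, $H_\Pi = \prod_{p \in \Pi} P$ is a direct product of pairwise-commuting normal Sylow subgroups---nilpotent and normal in $G$---hence $H_\Pi \leq \Fit{G}$.

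To show $P \trianglelefteq G$ for each $p \in \Pi$, fix a 2-path $r \rightarrow q \rightarrow p$; by Lemma \ref{2path} the Hall $\{p,q,r\}$-subgroup is 2-Frobenius of type $(p,q,r)$, and Lemma \ref{type (p,q,r)} forces $P$ to be non-cyclic. By Hall's theorem, pick a Sylow system for $G$ containing $P$; it suffices to show that each selected Sylow $s$-subgroup $S$ (with $s \neq p$) lies in $N_G(P)$, since the permutability of the Sylow system then yields $N_G(P) = G$. I split on the position of $s$ relative to $\{p\} \cup N_\uparrow^1(p)$ in $\FG$. (i) $s \in N_\uparrow^1(p)$: the Hall $\{p,s\}$-subgroup cannot be 2-Frobenius of type $(s,p,s)$, else Corollary \ref{type (p,q,p)} would force $P$ cyclic; so it is Frobenius with kernel $P$ and $S \leq N_G(P)$. (ii) $s \in N_\uparrow^2(p)$: choosing $q' \in N_\uparrow^1(p)$ with $s \rightarrow q'$, Lemma \ref{2path} makes $H_{psq'}$ 2-Frobenius of type $(p,q',s)$ with $P = F_1(H_{psq'})$ normal, hence $S \leq N_G(P)$. (iii) $q' \rightarrow s$ for some $q' \in N_\uparrow^1(p)$: since $PQ'$ is Frobenius with kernel $P$ by case (i), Lemma \ref{Frob sub1} applied to this group and the arrow $q' \rightarrow s$ makes $PQ'S$ Frobenius with kernel $PS$, so $P$ and $S$ centralize and $S \leq C_G(P) \leq N_G(P)$.

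The residual case (iv), where $sp \in \PG$ and $s$ has no $\FG$-edge to any vertex of $\{p\} \cup N_\uparrow^1(p)$, is the main obstacle; I intend to exclude it using minimality. In this scenario $s$ shares no $\overline{\PG}$-neighbor with $p$, so $\overline{\PG} \cup \{sp\}$ remains triangle-free; minimality of $\PG$ then forces $\chi(\overline{\PG} \cup \{sp\}) = 4$, meaning $s$ and $p$ are color-tied in every 3-coloring of $\overline{\PG}$. In the canonical $(\mathcal{O}, \mathcal{D}, \mathcal{I})$ 3-coloring this places $s \in \mathcal{I}$. I plan to derive a contradiction by producing an alternative 3-coloring that separates $s$ and $p$: if $N_\uparrow^1(s) \subseteq \mathcal{O}$ then $s$ can be relocated from $\mathcal{I}$ to $\mathcal{D}$, and if $N_\uparrow^1(s) \subseteq \mathcal{D}$ then $s$ can be moved to $\mathcal{O}$, in either case separating $s$ and $p$; the mixed sub-case is handled by combining such relocations with a dual move on $p$ (whose neighborhood contains a $\mathcal{D}$-vertex, by $p \in \Pi$) or on predecessors of $s$. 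Verifying this chromatic exclusion airtight is the delicate step; once it is established, cases (i)--(iii) combine via the Sylow system to conclude $N_G(P) = G$ and thus $P \trianglelefteq G$, completing the proof.
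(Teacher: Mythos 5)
Your reduction to showing $P\trianglelefteq G$ for each $p\in\Pi$, and your cases (i)--(iii), track the paper's own argument closely (they are exactly what Corollary \ref{adjacent}, Lemma \ref{2path}, Lemma \ref{type (p,q,r)} and Lemma \ref{Frob sub1} deliver), and the opening step $\Pi\subseteq\mathcal{I}$ via Corollary \ref{3Pcor} is fine. The genuine gap is your residual case (iv), which is the heart of the proposition and cannot be disposed of by recoloring. As you yourself deduce, when $sp\in\PG$ and $s,p$ have no common neighbour in $\overline{\PG}$, minimality forces $s$ and $p$ to receive the same colour in \emph{every} $3$-coloring of $\overline{\PG}$; so a $3$-coloring separating them does not exist, and your plan amounts to claiming this configuration never occurs in a minimal prime graph. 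But it does occur: a minimal prime graph obtained by deleting one edge from a triangle-free $4$-chromatic graph (e.g.\ the Gr\"otzsch graph minus an edge, which the paper cites as a minimal prime graph not arising from vertex duplication) has exactly such a colour-tied pair with no common neighbour, and with a suitable $3$-path-free orientation (realizable as an actual Frobenius digraph by Theorem \ref{PartialConverse}) one endpoint lies in $\Pi$ and the other is a sink. Your single-vertex relocations only eliminate the sub-cases $N_\uparrow^1(s)\subseteq\mathcal{O}$ and $N_\uparrow^1(s)\subseteq\mathcal{D}$; in the mixed sub-case no recoloring, local or global, can separate $s$ from $p$, precisely because the pair is colour-critical, so the ``chromatic exclusion'' you defer is not merely delicate but false.

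The paper resolves this case group-theoretically rather than graph-theoretically: from minimality it extracts a prime $t$ with $t\rightarrow q$ and $t\rightarrow s$ (where $r\rightarrow q\rightarrow p$ witnesses $p\in\Pi$ and $qs\notin\overline{\PG}$), so that the Hall $\{s,t,q,p\}$-subgroup $H$ has prime graph of diameter $3$; by \cite[Prop.~3]{LucidoTree} either $\ell_F(H)=3$ or $\ell_F(H)=4$ with the binary octahedral group $2O$ as a normal section; a careful analysis of centers of Frobenius and $2$-Frobenius sections (and of which of $s,t,q,p$ could equal $2$) rules out the $2O$ alternative, and then $\ell_F(H)=3$ forces $P\leqslant\Fit{H}$, i.e.\ $S$ normalizes $P$. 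Some group-theoretic input of this kind is unavoidable in case (iv), and without it your proof is incomplete at its central step; the remaining pieces (the Sylow-system framing and the observation that a product of normal Sylow subgroups for the primes in $\Pi$ is nilpotent, hence lies in $\Fit{G}$) are correct but peripheral.
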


\begin{proof}
We begin by noting that $\Pi \subseteq \mathcal{I}$ by Corollary \ref{3Pcor}.  It suffices to prove that a Sylow $p$-subgroup $P$ is normal in $G$ for every $p \in \Pi$.  We do this by showing that the normalizer of $P$ contains a Sylow $s$-subgroup $S$ for every prime $s \in \pi (G)$.  If $s \rightarrow p$ in $\FG$, then $H_{sp}$ is Frobenius by Corollary \ref{adjacent}, so the normalizer of $P$ contains a Sylow $s$-subgroup of $G$.

Suppose now that $sp \in \PG$.  By minimality, the removal of $sp$ from $\PG$ must create a triangle in $\overline{\PG}$ or increase the chromatic number of $\overline{\PG}$.  Suppose $s$ and $p$ are in different color classes of $\overline{\PG}$.  Then removing $sp$ from $\PG$ does not increase the chromatic number of $\overline{\PG}$, so there exists a prime $t$ so that $st, tp \in \overline\PG$.   In the case that $s \in \mathcal{D}$, we have that $t \rightarrow s$ and $t \rightarrow p$ in $\FG$.  By Corollary \ref{adjacent}, we know that $H_{pt}$ is a Frobenius group.  Thus, we may apply Lemma \ref{Frob sub1} to see that some Sylow $s$-subgroup of $G$ normalizes $P$.  If $s \in \mathcal{O}$, then $s \rightarrow t \rightarrow p$ in $\FG$, so $H_{stp}$ is a $2$-Frobenius group of type $(p,t,s)$ by Lemma \ref{2path} and so, $P$ is normalized by a Sylow $s$-subgroup.

Let $s \in \mathcal{I}$.  Suppose there exists a prime $q \in N(p) \bigcap N(s)$.  We know that $H_{pq}$ is a Frobenius group by Corollary \ref{adjacent}, so we may apply Lemma \ref{Frob sub1} to show that some Sylow $s$-subgroup of normalizes $P$.  Assume now that $N(p) \bigcap N(s) = \emptyset$.  Then there exists a $2$-path $r \rightarrow q \rightarrow p$ in $\FG$ for which $qs \notin \overline \PG$.  Since $q \in \mathcal{D}$ and $s \in \mathcal{I}$, there exists a prime $t$ so that $tq, ts \in \overline\PG$ by minimality of $\PG$.  In particular, $t \in N_\uparrow^1(s)$.  By Corollary \ref{3Pcor}, $t \rightarrow q$ in $\FG$.  Let $H = H_{stqp}$.  Then $\FrG{H}$ consists of exactly the edges $t\rightarrow q$, $q\rightarrow p$, $t\rightarrow s$.  We conclude that $\PrG{H}$ has diameter $3$, so by \cite[Prop.~3]{LucidoTree}, either the Fitting length of $H$ is $3$ or the Fitting length of $H$ is $4$ and the binary octahedral group $2O$ is a normal section of $H$.

First, suppose that $\ell_F(H) = 4$ and $2O$ is a normal section of $H$.  Since $K = H_{tqp}$ is a $2$-Frobenius group of type $(p,q,t)$ by Theorem \ref{2path}, we know $q \ne 2$ by Lemma \ref{2 Frobenius}.  Let $N$ and $M$ be the normal subgroups of $H$ so that $M/N$ is isomorphic to $2O$.  It follows that $G/N$ has a central subgroup of order $2$.  First we observe that if $F_2 (K) \le N$, then we see that $K/N$ is a cyclic Sylow $t$-subgroup of $G/N$.  On the other hand, if $F_2 (K)$ is not contained in $K \cap N$, then $K/(K \cap N) \cong KN/N$ is either a Frobenius group or a $2$-Frobenius group.  Since both Frobenius groups and $2$-Frobenius groups have trivial center, we see that we $2$ cannot divide $|K|$.  We conclude that neither $t$ nor $p$ can be $2$, which forces $s = 2$.  Let $L$ be a Hall $\{2,t\}$-subgroup of $H$.  We know that $L$ is either a Frobenius group whose Frobenius kernel is a $2$-group or a $2$-Frobenius group of type $(t,2,t)$.  Since a Sylow $2$-subgroup is not cyclic it cannot be $2$-Frobenius of type $(t,2,t)$ by Lemma \ref{type (p,q,p)}.  On the other hand, if $L$ is a Frobenius group, then $LN/N$ cannot have a central subgroup of order $2$.  Therefore, we conclude that $s \ne 2$, and hence we cannot have that $\ell_F(H) = 4$.

It follows that $H$ has Fitting length $3$.  If $P$ is not normalized by $S$, then $P \not \leqslant \Fit{H}$.  So we have $t \rightarrow q \rightarrow p$ in the Frobenius digraph of $H/\Fit{H}$.  Thus $H/\Fit{H}$ contains a $2$-Frobenius group of type $(p,q,t)$, which necessarily has Fitting length $3$, a contradiction.  This final contradiction shows that $S$ normalizes $P$ for any $s\in \PG$, so $P$ is normal in $G$.
\end{proof}

We proceed to the first implication of the minimality criterion on the structure of a group, an application emphasizing the $3$-colorability condition of Theorem \ref{MainTheorem}.

\begin{definition}
Given a natural number $n$, denote by $\sigma(n)$ the number of prime divisors of $n$.  Define $\sigma(G) = \text{max} \{ \sigma (o(g)) : g \in G \}$.  Call a group \emph{$\sigma$-reduced} if every prime $p$ appears in at most one chief factor of $G$.
\end{definition}

\begin{theoremN}\label{3k}
For any solvable group $G$ for which $\PG$ is minimal, $\sigma(|G|) \leq 3 \sigma(G)$.
\end{theoremN}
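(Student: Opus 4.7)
The plan is to use the canonical three-coloring $\pi(G) = \mathcal{O} \sqcup \mathcal{D} \sqcup \mathcal{I}$ of $\overline{\PG}$ already established. Since each color class is an independent set in $\overline{\PG}$, equivalently a clique in $\PG$, and $\sigma(|G|) = |\mathcal{O}| + |\mathcal{D}| + |\mathcal{I}|$, pigeonhole gives some class of size at least $\lceil \sigma(|G|)/3 \rceil$. It therefore suffices to prove, for each color class $X \in \{\mathcal{O}, \mathcal{D}, \mathcal{I}\}$, that there exists an element of $G$ whose order is divisible by every prime in $X$. I intend to do this by showing that each Hall $X$-subgroup is nilpotent: a nilpotent group always contains an element whose order is the product of one nontrivial prime-order element drawn from each of its Sylow factors.

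For $X = \mathcal{I}$, I would build on Proposition \ref{HPiNilpotent}, which already places $H_\Pi$ inside $\Fit{G}$. It remains to handle primes $p \in \mathcal{I} \setminus \Pi$. For such a $p$, no $2$-path in $\FG$ ends at $p$, so every in-neighbor of $p$ lies in $\mathcal{O}$. For any $q \in \mathcal{I}$ the edge $pq$ lies in $\PG$, and by minimality its removal either creates a triangle in $\overline{\PG}$ or forces the chromatic number up. In the triangle case there exists $t$ with $pt, qt \in \overline{\PG}$; the constraints $p \notin \Pi$ and the orientation rules of $\FG$ push $t$ into $\mathcal{O}$, so $t \to p$ and $t \to q$, and Lemma \ref{Frob sub1} immediately produces a Sylow $p$- and a Sylow $q$-subgroup that centralize each other. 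The chromatic-number case, where $p$ and $q$ are forced to share a color in every $3$-coloring of $\overline{\PG}$, will require a separate argument exploiting this rigidity, but should yield centralization by similar means.

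For $X = \mathcal{O}$ a dual argument works: primes of $\mathcal{O}$ are sources in $\FG$, and reversing the roles of in- and out-neighborhoods gives a direct analogue of the $\mathcal{I}$ argument with common successors in $\mathcal{D} \cup \mathcal{I}$ in place of common predecessors in $\mathcal{O}$.

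The principal obstacle is the middle class $\mathcal{D}$, whose primes are entangled with both $\mathcal{O}$ and $\mathcal{I}$ and need not share an obvious common neighbor in $\FG$. For a pair $p, q \in \mathcal{D}$, I would use the $2$-paths $r \to p \to s$ and $r' \to q \to s'$ with $r, r' \in \mathcal{O}$ and $s, s' \in \mathcal{I}$, which Lemma \ref{2path} turns into $2$-Frobenius Hall subgroups of type $(s, p, r)$ and $(s', q, r')$. The fine Sylow constraints supplied by Lemma \ref{type (p,q,r)}, combined once more with minimality of $\PG$ applied to the edge $pq \in \PG$, should force the Sylow $p$- and Sylow $q$-subgroups to centralize. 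Once nilpotence is established for each of $H_\mathcal{O}$, $H_\mathcal{D}$, $H_\mathcal{I}$, we obtain $\sigma(G) \geq \max\{|\mathcal{O}|, |\mathcal{D}|, |\mathcal{I}|\} \geq \sigma(|G|)/3$, completing the proof.
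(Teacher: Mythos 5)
There is a genuine gap, and it is located exactly where your proposal becomes vague: the nilpotence of $H_\mathcal{O}$ and $H_\mathcal{D}$ (and hence the existence of elements covering those color classes) is not available in $G$ itself by the kind of arguments you sketch. The paper's proof begins with a reduction you are missing: by \cite[Lem.~16.17]{HuppertCharacters} one passes to a $\sigma$-reduced subgroup $\Sigma \leqslant G$ with $\sigma(|\Sigma|) = \sigma(|G|)$, and minimality of $\PG$ forces $\PrG{\Sigma} \cong \PG$ and $\FrG{\Sigma} \cong \FG$, so one may assume $G$ is $\sigma$-reduced. In a $\sigma$-reduced group every Sylow subgroup is elementary abelian, and since each prime in $\mathcal{O}\cup\mathcal{D}$ plays the role of a Frobenius complement (or upper kernel) in some Hall section, its Sylow subgroup is cyclic of prime order; then for any edge $pq\in\PG$ with $p,q\in\mathcal{O}\cup\mathcal{D}$ the Hall $\{p,q\}$-subgroup has order $pq$ and contains an element of order $pq$, hence is cyclic, and $H_\mathcal{O}$, $H_\mathcal{D}$ are nilpotent essentially for free. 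Without this reduction, the claims you want are simply too strong: an edge $pq\in\PG$ only guarantees an element of order $pq$, not that Sylow $p$- and $q$-subgroups centralize each other; Frobenius complements need not be nilpotent; and a cyclic Sylow subgroup (as primes in $\mathcal{D}$ have, by Lemma \ref{in DD}) can act with a nontrivial kernel on another Sylow subgroup, producing the required element of order $pq$ while the Hall $\{p,q\}$-subgroup is non-nilpotent. So your ``dual argument'' for $\mathcal{O}$ and especially the $\mathcal{D}$ case, which you leave as ``should force the Sylow subgroups to centralize,'' do not go through as stated, and $\mathcal{D}$ is precisely the class for which no common neighbor in $\FG$ is available.

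Your treatment of $\mathcal{I}$ is the closest to the paper's, but it also has two holes. First, the ``chromatic-number case'' you defer does not need a separate argument: the paper shows it cannot occur, because for $p\in\mathcal{I}$ and $s\in\mathcal{I}\setminus\Pi$ one has $N_\uparrow^1(p)\cap\mathcal{I}=\emptyset$ and $N_\uparrow^1(s)\cap\mathcal{D}=\emptyset$, so after adding $ps$ to $\overline{\PG}$ the recoloring $\mathcal{O}$, $\mathcal{D}\cup\{s\}$, $\mathcal{I}$ (with $s$ moved) is still admissible; minimality then forces a triangle, i.e.\ a common in-neighbor $q\in\mathcal{O}$. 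Second, in the triangle case your appeal to Lemma \ref{Frob sub1} is not ``immediate'': its hypothesis is that the Hall $\{t,p\}$-subgroup is a Frobenius group with kernel a Sylow $p$-subgroup, and for $p\in\mathcal{I}\setminus\Pi$ you cannot invoke Corollary \ref{adjacent} to rule out the $2$-Frobenius alternative. The paper avoids this by arguing on the Hall $\{q,p,s\}$-subgroup directly: it is Frobenius or $2$-Frobenius, and $H_{ps}$ is either its Frobenius kernel or its upper kernel, hence nilpotent in both cases. Your final pigeonhole step ($\sigma(|G|)\leq 3\max\{|\mathcal{O}|,|\mathcal{D}|,|\mathcal{I}|\}\leq 3\sigma(G)$) matches the paper, but the body of the argument needs the $\sigma$-reduction to be salvageable.
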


\begin{proof}
It can be shown \cite[Lem.~16.17]{HuppertCharacters} that $G$ contains at least one $\sigma$-reduced subgroup $\Sigma$ so that $\sigma(|\Sigma|) = \sigma(|G|)$.  Since $\Sigma$ is a subgroup of $G$, we see that $pq \in \overline {\PrG{\Sigma}}$ for every $pq \in \overline {\PG}$.  Furthermore, since $\PG$ is minimal, every edge in $\PG$ must be present in $\PrG{\Sigma}$.  It follows that $\PrG{\Sigma}$ isomorphic to $\PG$, and since the orientation of edges in $\FG$ is closed under subgroups, we see in fact that $\FrG{\Sigma}$ is isomorphic to $\FG$.  Clearly $\sigma(\Sigma) \leq \sigma(G)$, so we may assume without loss of generality that $G$ is $\sigma$-reduced.

Because every prime appears in at most one chief factor of $G$, every Sylow subgroup of $G$ must be elementary abelian.  Every vertex in $\mathcal{O}$ and $\mathcal{D}$ serves as a Frobenius complement in some Hall subgroup of $G$ or an appropriate quotient group, so it follows that every Sylow $p$-subgroup for $p \in \mathcal{O} \bigcup \mathcal{D}$ is cyclic of prime order.  Then $H_{pq}$ is cyclic for any $p,q\in  \mathcal{O} \bigcup \mathcal{D}$ for which $pq \in \PG$. It follows that $H_\mathcal{O}$ and $H_\mathcal{D}$ are nilpotent.

To prove that $H_\mathcal{I}$ is nilpotent, by Proposition \ref{HPiNilpotent} it suffices to show that $H_{ps}$ is nilpotent for any $p\in \mathcal{I}$ and $s\in \mathcal{I}\setminus \Pi$.  We know that $N^1_\uparrow(p)\cap \mathcal{I}= \emptyset$ and $N^1_\uparrow(s)\cap \mathcal{D}= \emptyset$.  Thus if we add the edge $ps$ to $\overline\PG$, $\mathcal{O}, \mathcal{D}\cup \{s\}, \mathcal{I}\cup\{p\}$ remains an admissible $3$-coloring.  From minimality, $p$ and $s$ must then share an in-neighbor $q\in\mathcal{O}$. Thus $H_{ps}$ is either a Frobenius kernel or an upper kernel in a $H_{qps}$, so $H_{ps}$ is nilpotent.

We conclude that $n:=\text{max}\{|\mathcal{O}|,|\mathcal{D}|,|\mathcal{I}|\}\leq \sigma(G)$, and therefore \[\sigma(|G|)\leq 3n \leq 3\sigma(G).\qedhere \]
\end{proof}

\begin{remark}
Theorem \ref{3k} is motivated by the conjecture\cite[9]{Keller3} that, in fact, $\sigma(|G|)\leq 3\sigma(G)$ for every solvable group $G$.  One may notice that the above approach does not require minimality before proving that $\mathcal{I}$ is nilpotent.  Therefore this argument implies the general conjecture except for cases where $n< \sigma(G) < |\mathcal{I}|$, where $n=\text{max}\{|\mathcal{O}|,|\mathcal{D}|\}$.  In fact, $n$ can actually be strengthened to $n=\sum_k \text{max}\{|\mathcal{O}_k|,|\mathcal{D}_k|\}$, where $k$ runs over the components of $\FG [\mathcal{O} \bigcap \mathcal{D}]$, however in general the number of vertices in $\mathcal{I}$ can be much greater in non-minimal prime graphs.
\end{remark}

\section{$5$-cycles as Prime Graphs.}

We continue our discussion of minimal prime graphs by returning to solvable groups with $5$-cycle prime graphs.  Intuitively, we think of these groups as a primordial model for groups with minimal prime graphs, expecting many of the group theoretic properties resulting from minimality to stem from those exhibited here.  We develop this notion by showing that the $5$-cycle is not only the minimal prime graph of smallest order, but also that every minimal prime graph contains a $5$-cycle.  This shows that groups with $5$-cycle prime graphs occur as Hall subgroups in every solvable group with a minimal prime graph.  From this observation, we are able to quickly derive an upper bound on the Fitting length of any solvable group with a minimal prime graph, though this bound will be improved in the next section.  Most importantly, we prove that solvable groups with $5$-cycle prime graphs have Fitting length exactly $3$. 

\pagebreak
\begin{lemmaN}\label{inducedPentagon}
Every minimal graph contains an induced $5$-cycle.
\end{lemmaN}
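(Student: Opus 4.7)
My plan is to show that $\overline{\Gamma}$ has odd girth exactly $5$. Since $\overline{\Gamma}$ is triangle-free any $5$-cycle in it is chordless and hence induced, and since $C_5$ is self-complementary such a $5$-cycle is also induced in $\Gamma$ itself. By Theorem~\ref{MainTheorem}, $\overline{\Gamma}$ is triangle-free and $3$-colorable, and Lemma~\ref{not2colorable} strengthens this to chromatic number exactly $3$. Thus $\overline{\Gamma}$ contains some odd cycle, and I will assume toward a contradiction that the shortest one, $C = v_0 v_1 \cdots v_{2k}$, has length $2k+1 \geq 7$.

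The first lemma I will record is a general minimality principle: if $u,v$ are non-adjacent in $\overline{\Gamma}$ with no common neighbor, then every proper $3$-coloring of $\overline{\Gamma}$ assigns $u$ and $v$ the same color. This is immediate from minimality together with Theorem~\ref{MainTheorem}, since adding $uv$ to $\overline{\Gamma}$ must either produce a triangle or destroy $3$-colorability, and the first option is ruled out by hypothesis. Next I verify this hypothesis for every pair $v_i, v_j$ at cyclic distance $d \geq 3$ along $C$. The cycle $C$ is chordless (a chord creates either a triangle or a strictly shorter odd cycle), so any common neighbor on $C$ would force $d = 2$; a common neighbor $w \notin C$ would close two walks through $v_i$ and $v_j$ of lengths $d+2$ and $2k+3-d$, of which exactly one is odd. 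A short parity case split (using $3 \leq d \leq k$ and $k \geq 3$) shows the odd length is at most $2k-1 < 2k+1$, contradicting the odd girth of $\overline{\Gamma}$.

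To finish I introduce the auxiliary graph $H$ on $\{v_0, \ldots, v_{2k}\}$ joining exactly the pairs at cyclic distance $\geq 3$ on $C$. Since $\gcd(k, 2k+1) = 1$, iterating the shift $i \mapsto i+k$ on $\mathbb{Z}/(2k+1)$ is a single $(2k+1)$-cycle; each consecutive pair in that orbit sits at cyclic distance exactly $k \geq 3$ on $C$, so the distance-$k$ edges of $H$ alone form a Hamiltonian cycle and $H$ is connected. Combining with the two principles above, every vertex of $C$ must lie in a single color class of every $3$-coloring of $\overline{\Gamma}$, contradicting $v_0 v_1 \in \overline{\Gamma}$. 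The main obstacle I foresee is the parity bookkeeping in the common-neighbor step; once one handles the split of $d$ odd versus $d \geq 4$ even and checks $d+2 \leq 2k-1$ respectively $2k+3-d \leq 2k-1$, the remainder is routine.
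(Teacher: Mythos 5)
Your proof is correct, but it follows a genuinely different route from the paper's. The paper argues constructively inside a fixed $3$-coloring of the complement: starting from an edge of the complement between two color classes, it applies minimality (an added edge between distinct color classes cannot break $3$-colorability, so it must create a triangle) twice and Lucido's Three Primes Lemma once to exhibit five explicit vertices inducing a $5$-cycle, threaded through the color classes. You instead prove the sharper structural fact that the complement has odd girth exactly $5$: assuming a shortest odd cycle of length $2k+1\geq 7$, you show any two of its vertices at cyclic distance at least $3$ are non-adjacent (chordlessness) and have no common neighbor (the parity argument producing a shorter odd cycle), so by minimality plus Theorem~\ref{MainTheorem} they are forced into the same class in \emph{every} proper $3$-coloring; connectivity of the distance-$k$ relation (via $\gcd(k,2k+1)=1$) then makes the whole cycle monochromatic, contradicting properness, and Lemma~\ref{not2colorable} guarantees an odd cycle exists in the first place. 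Both arguments consume the same inputs---minimality, the characterization of Theorem~\ref{MainTheorem} (Lucido's lemma being just its triangle-freeness half), and Lemma~\ref{not2colorable}---and your parity and circulant-connectivity checks all hold, including the edge cases $d=k$ and $k=3$. What the paper's version buys is brevity and an explicit $5$-cycle positioned relative to the color classes, which fits the way $\OO$, $\DD$, $\II$ are used later; what yours buys is a stronger conclusion (every shortest odd cycle of the complement is an induced $C_5$, and the odd girth is pinned at $5$) together with a reusable rigidity principle: non-adjacent vertices of the complement with no common neighbor are monochromatic in every $3$-coloring of a minimal prime graph's complement.
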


\begingroup
{
\noindent\emph{Proof.}
Let $P,Q,$ and $R$ be the color classes of any $3$-coloring of $\overline{\PG}$.  Because $\overline{\PG}$ is not $2$-colorable, we can assume without loss of generality that for a prime $p \in P$, there exists some prime $q \in Q$ so that $pq\in \overline\PG.$

\setlength\intextsep{0pt}
\begin{wrapfigure}[9]{r}{0.4\textwidth}
\begin{center}
	\begin{tikzpicture}[scale=0.85]
    \node (pp) at (-0.587785, -0.809017) {$p^\prime$}; 
    \node (qp) at ( 0.587785, -0.809017) {$q^\prime$};
    \node (r) at ( 0,1) {$r$};
    \node (p) at (0.951057, 0.309017) {$p$};
    \node (q) at (-0.951057, 0.309017) {$q$};
    \node (pp2) at (3.41221, -0.809017) {$p^\prime$}; 
    \node (q2) at (  4.58779, -0.809017) {$q$};
    \node (rp) at (4,1) {$r^\prime$};
    \node (p2) at (3.04894, 0.309017) {$p$};
    \node (r2) at (4.95106, 0.309017) {$r$};
    \begin{scope}[every path/.style={-}]
       \draw (r) -- (p);
       \draw (r) -- (q);
       \draw (pp) -- (qp);
       \draw (r2) -- (q2);
       \draw (rp) -- (p2);
       \draw (pp2) -- (q2);
       \draw (pp) -- (qp);
    \end{scope} 
\end{tikzpicture}
\end{center}
\caption{Cases for Lemma \ref{inducedPentagon}.}
\end{wrapfigure}
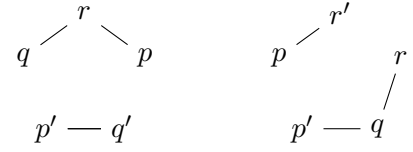
Suppose that there exists a prime $r\in R$ so that $rp, rq\in \PG$.  By minimality, removing the edge $rq$ will not yield the prime graph of a solvable group.  Since $r$ and $q$ are in different color classes in $\PG$, the graph $\overline{\PG}+rq$ admits the same $3$-coloring as $\overline \PG$.  Thus, there exists some prime $p'\in P$ so that $p'r, p'q\in \overline {\PG}$.  Similarly, since $rp \in \PG$, there exists some prime $q'\in Q$ so that $q'p, q'r \in \overline {\PG}$; whence by Lucido's Three Primes Lemma, $p'q'\in \PG$.  It follows that $\PG [\{p,p',q,q',r\}]$ is isomorphic to the $5$-cycle.

Now suppose that no prime $r \in R$ satisfies $pr, qr\in \PG$.  Again, since $\overline \PG$ is not $2$-colorable, we can assume without loss of generality that there exists a prime $r \in R$ so that $pr \in \overline \PG$, whence $rq \in \PG$.  Similarly, there exists a prime $r'\in R$ so that $qr' \in \overline\PG$, whence $r'p\in \PG$.  Thus, by minimality there exists a prime $p'\in P$ such that $p'r, p'r'\in \overline \PG$, whence $p'q\in \PG$.  It follows that $\PG[\{p,p',q,r,r'\}]$ is isomorphic to the $5$-cycle.\hfill \qed
}\endgroup

\begin{remark}
In a certain sense, the minimality criterion is a partial converse to Lucido's Three Primes Lemma.  Where this lemma asserts that $pq, pr\in \overline\PG$ implies that $qr \in \PG$, the minimality criterion asserts that when $q$ and $r$ are in different color partitions of $\overline\PG$, $qr \in \PG$ implies that there exists a $p$ so that $pq,pr\in \overline \PG$.  Viewed this way, Lucido's Three Primes Lemma together with the minimality criterion assures the existence of a self complementary object in every minimal graph.
\end{remark}

Next, we make a simple observation connecting Lemma \ref{inducedPentagon} to Proposition $3$ of \cite{LucidoTree}.  This allows us to easily derive an upper bound on the Fitting length of a group with a minimal prime graph.

\begin{corollaryN}\label{FittingLength}
Let $G$ be a group with a minimal prime graph.  Then $3\leq \ell_F(G)\leq 5$.
\end{corollaryN}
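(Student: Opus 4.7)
My plan is to establish the inequalities $\ell_F(G) \geq 3$ and $\ell_F(G) \leq 5$ separately, drawing on the structural results from Section~3 together with Lucido's work on prime graph diameter and Fitting length.

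For the lower bound, I would combine the non-$2$-colorability of $\overline{\PG}$ (Lemma~\ref{not2colorable}) with the absence of isolated vertices in $\overline{\PG}$ (Lemma~\ref{no singletons}) to conclude that each of the three classes $\mathcal{O}$, $\mathcal{D}$, $\mathcal{I}$ of the canonical $3$-coloring of $\overline{\PG}$ is nonempty. Picking any prime $q \in \mathcal{D}$, the definition of $\mathcal{D}$ provides primes $p \in N_\uparrow^1(q)$ and $r \in N_\downarrow^1(q)$, yielding a directed $2$-path $p \to q \to r$ in $\FG$. Lemma~\ref{2path} then identifies the Hall $\{p,q,r\}$-subgroup as a $2$-Frobenius group, and by its definition such a group has Fitting length exactly~$3$. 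Since the Fitting length of a solvable group is monotone under passage to subgroups (intersecting the Fitting series of $G$ with any subgroup $H$ produces a nilpotent normal series for $H$), we conclude $\ell_F(G) \geq 3$.

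For the upper bound, my plan follows the signal given in the remark that introduces the corollary: combine Lemma~\ref{inducedPentagon} with Proposition~$3$ of \cite{LucidoTree}. Lemma~\ref{inducedPentagon} provides an induced $5$-cycle in $\PG$, and Lucido's diameter theorem from \cite{LucidoDiameter} guarantees that $\mathrm{diam}(\PG) \leq 3$ for any connected prime graph of a solvable group. When $\mathrm{diam}(\PG) = 3$, Proposition~$3$ of \cite{LucidoTree} immediately yields $\ell_F(G) \in \{3, 4\}$, which is well within the desired bound. When $\mathrm{diam}(\PG) \leq 2$ (with $\PG \cong C_5$ as the prototype), I would analyze the Hall subgroup $H$ supported on the primes of the induced $5$-cycle. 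Since $\PrG{H} \cong C_5$, a direct Fitting-series analysis using Corollary~\ref{3Pcor} and Proposition~\ref{HPiNilpotent} bounds $\ell_F(H) \leq 3$, and the remaining primes in $\pi(G) \setminus \pi(H)$ should contribute at most a bounded number of additional Fitting layers, again controlled by the nilpotent embedding of $H_\Pi$ in $\Fit{G}$.

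The main obstacle I anticipate is the diameter-$\leq 2$ case of the upper bound, since Proposition~$3$ of \cite{LucidoTree} addresses only the diameter-$3$ case directly. The argument must show that the primes of $\pi(G) \setminus \pi(H)$ add at most two extra Fitting layers beyond $\ell_F(H) \leq 3$; this rigidity should follow from the no-directed-$3$-path condition of Corollary~\ref{3Pcor} together with the saturation of $\FG$ with arrows that minimality forces. The diameter-$3$ case, by contrast, reduces in one line to Lucido's result.
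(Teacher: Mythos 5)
Your lower bound is sound and is essentially the paper's: minimality forces $\DD\neq\emptyset$, a vertex of $\DD$ sits on a directed $2$-path in $\FG$, Lemma \ref{2path} produces a $2$-Frobenius Hall subgroup (Fitting length exactly $3$), and Fitting length is monotone under passage to subgroups, giving $\ell_F(G)\geq 3$.

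The upper bound, however, has a genuine gap exactly where you flag your ``main obstacle,'' and the sketch you offer there would not close it. When $\diam{\PG}\leq 2$, knowing that the Hall subgroup $H$ on the five primes of an induced pentagon satisfies $\ell_F(H)\leq 3$ gives no control over $\ell_F(G)$: Fitting length passes down to subgroups, not up from them, and there is no mechanism by which the primes of $\pi(G)\setminus\pi(H)$ ``contribute at most a bounded number of additional Fitting layers'' --- that is precisely the statement needing proof. (Establishing $\ell_F(H)\leq 3$ for a $C_5$ prime graph is itself the content of the later Proposition \ref{5cycle}, whose proof is long, so it is not available as a quick intermediate step either.) The paper avoids any case split on $\diam{\PG}$. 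By Proposition \ref{HPiNilpotent}, $H_\Pi\leqslant\Fit{G}$, so for any $\Delta\subseteq\Pi$ the Hall $\Delta$-subgroup is a nilpotent normal subgroup of $G$. Since every induced pentagon of a minimal prime graph meets $\Pi$ (Lemma \ref{inducedPentagon} together with Corollary \ref{3Pcor}), one chooses $\Delta\subseteq\Pi$ minimal so that passing to $G/H_\Delta$ destroys all induced pentagons; by minimality of $\Delta$ some pentagon loses exactly one vertex, and a pentagon minus a vertex is a path of diameter $3$, so $\PrG{G/H_\Delta}$ has diameter $3$. Now Proposition 3 of \cite{LucidoTree} applies to the \emph{quotient}, giving $\ell_F(G/H_\Delta)\leq 4$, and nilpotency of $H_\Delta$ yields $\ell_F(G)\leq 5$. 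This ``quotient by a nilpotent normal Hall subgroup to force diameter $3$'' step is the idea missing from your plan; without it, or some genuine argument in the $\diam{\PG}\leq 2$ case, the bound $\ell_F(G)\leq 5$ is not established.
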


\begin{proof}
By Proposition \ref{HPiNilpotent}, we have that $H_\Pi\leqslant \Fit{G}$.  By Lemma \ref{inducedPentagon}, $\overline\PG$ contains at least one induced $5$-cycle, each of which has contains at least one vertex in $\Pi$.  Let $\Delta \subseteq \Pi$ be the smallest set of vertices such that $\PG[\Delta]$ contains no induced $5$-cycles.  Then there exists a pentagon that loses exactly one vertex in $\PrG{G/H_\Delta}$.  Hence $\PrG{G/H_\Delta}$ has diameter $3$.  By \cite[Prop.~3]{LucidoTree}, $\ell_F(G/H_\Delta)\leq 4$, so $\ell_F(G)\leq 5$.  Every $2$-Frobenius group has Fitting length $3$, so since every minimal prime graph contains a $2$-path, we obtain the lower bound $3\leq \ell_F(G)$.  Thus $3\leq \ell_F(G)\leq 5$.
\end{proof}

To show that solvable groups whose prime graph is isomorphic to a $5$-cycle have Fitting length $3$, we must first prove a general lemma.

\begin{lemmaN}\label{theOs}
Let $G$ be a solvable group such that $\PG$ is minimal.  Assume that $H = H_{srq}$ is a Hall $\{ q, r, s \}$-subgroup of
$G$ and that $H_{sr}$ and $H_{sq}$ are Hall $\{ r, s \}$- and $\{ q, s \}$ subgroups of $H$, respectively.
\begin{enumerate}
\item[(a)] If $s \rightarrow r \rightarrow q$ in $\FG$, then $\mathbf{O}_s (H_{srq}) = \mathbf{O}_s (H_{sr})$.
\item[(b)] If $q \leftarrow s \rightarrow r$ in $\FG$, then $\mathbf{O}_s (H_{sqr}) = \mathbf{O}_s (H_{sq}) = \mathbf{O}_s (H_{sr})$.
\item[(c)] If $s, q \in \mathcal{O}$ and $r \in N_\downarrow^1(s) \cap N_\downarrow^1(q)$.  Then $\mathbf{O}_s (H_{sqr}) = \mathbf{O}_s (H_{sr})$.
\end{enumerate}
\end{lemmaN}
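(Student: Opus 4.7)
The plan is to treat each of the three parts by first using Williams's theorem and the direction data in $\FG$ to pin down the structure of $H = H_{srq}$ as either Frobenius or 2-Frobenius (of a forced type), then reading off $\mathbf{O}_s$ from that structure. The overall technique is the same throughout: project Hall subgroups to the Frobenius or 2-Frobenius quotient of $H$, use that Hall subgroups of Frobenius groups are again Frobenius (with inherited kernel/complement), and use the rules from Corollary \ref{type (p,q,p)}, Lemma \ref{2 Frobenius}, and the definition of $\FG$ to eliminate inconsistent 2-Frobenius types.

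For (a), the $2$-path $s\to r\to q$ forces $H$ to be $2$-Frobenius of type $(q,r,s)$ by Lemma \ref{2path}, so $F_1(H) = Q$, $F_2(H) = QR$, and $H/F_2 \cong S$. Since $H_{sr}$ is an $\{s,r\}$-Hall subgroup of $H$ and $H_{sr}\cap Q = 1$, it is a complement to $F_1$, hence $H_{sr}\cong H/Q$, which is Frobenius with kernel $R$ and complement $S$. A straightforward normal-subgroup computation in a $2$-Frobenius group (where any normal $s$-subgroup must survive projections into the Frobenius quotients) gives $\mathbf{O}_s(H) = 1$, and $\mathbf{O}_s(H_{sr}) = 1$ since $H_{sr}$ is Frobenius with an $r$-kernel.

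For (b), the prime graph of $H_{sqr}$ has the two non-edges $sq,sr$; since solvable groups cannot have three components, $qr\in\PG$, so the components are $\{s\}$ and $\{q,r\}$, and $H_{sqr}$ is Frobenius or $2$-Frobenius. In the Frobenius case, $s\to q$ (and $s\to r$) forces the kernel to be $H_{qr}$, so $H_{sq}$ and $H_{sr}$ inherit Frobenius structures with $q$- and $r$-kernels, giving $\mathbf{O}_s = 1$ everywhere. In the $2$-Frobenius case, the middle component $F_2/F_1$ is either $\{s\}$ or $\{q,r\}$. If it were $\{s\}$, then $H_{sq}$ would be forced (after intersecting with $F_1, F_2$ and lifting) into type $(q,s,q)$, giving direction $q\to s$ in $\FG$ — a contradiction. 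Thus $F_2/F_1$ is a $\{q,r\}$-group and $F_1(H_{sqr})$ is the full normal $s$-subgroup; the same projection argument shows $H_{sq}$ and $H_{sr}$ are $2$-Frobenius of types $(s,q,s)$ and $(s,r,s)$ with the same $F_1$, and all three $\mathbf{O}_s$'s equal $F_1(H_{sqr})$.

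For (c), since $s,q\in\mathcal{O}$ the edge $sq$ cannot lie in $\overline{\PG}$, so $sq\in\PG$ and $H_{sqr}$ has components $\{s,q\}$ and $\{r\}$. In the Frobenius case, the directions $s\to r, q\to r$ force the kernel to be $R$, so $\mathbf{O}_s(H_{sqr}) = 1 = \mathbf{O}_s(H_{sr})$. In the $2$-Frobenius case, if the middle were $\{s,q\}$ then $H_{sr}$ would inherit type $(r,s,r)$, producing the illegal direction $r\to s$; so the middle must be $\{r\}$, giving $F_2/F_1 = R$ and $F_1 = F_1^s\times F_1^q$. Then $\mathbf{O}_s(H_{sqr}) = F_1^s$ (by the same Frobenius-quotient argument ruling out any enlargement), while $H_{sr}$ projects onto a Frobenius $H_{sr}/F_1^s$ with kernel $R$, making $H_{sr}$ a $2$-Frobenius of type $(s,r,s)$ with $F_1(H_{sr}) = F_1^s$, so $\mathbf{O}_s(H_{sr}) = F_1^s$ as well.

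The main obstacle is the case analysis in (b) and (c): one must repeatedly identify the Fitting/second-Fitting structure of a Hall subgroup of a $2$-Frobenius group by projecting to the appropriate quotient, and then verify that the resulting $2$-Frobenius type contradicts a direction in $\FG$ whenever the ``wrong'' middle component is hypothesized. Once this bookkeeping is done, the equalities of $\mathbf{O}_s$ follow immediately from standard facts about normal subgroups of Frobenius and $2$-Frobenius groups.
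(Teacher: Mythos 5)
Your parts (b) and (c) follow essentially the same route as the paper: split into the Frobenius and $2$-Frobenius cases for $H$, use the edge directions in $\FG$ (via Corollary \ref{type (p,q,p)} or the definition of the digraph) to rule out the ``wrong'' middle component, and then compute $\mathbf{O}_s$ by passing to the Frobenius quotient. Those two parts are fine, modulo routine verification of the projection steps.

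Part (a), however, has a genuine gap. You assert that type $(q,r,s)$ forces $\Fit{H}=Q$, $F_2(H)=QR$, and that $H_{sr}\cong H/Q$ is Frobenius with kernel $R$, concluding $\mathbf{O}_s(H)=1=\mathbf{O}_s(H_{sr})$. But the definition of a $2$-Frobenius group of type $(q,r,s)$ explicitly allows the Hall $\{r,s\}$-subgroup $RS$ to be a $2$-Frobenius group of type $(s,r,s)$ rather than Frobenius with kernel $R$; the arrow $s\rightarrow r$ in $\FG$ covers both possibilities. In that case $\mathbf{O}_s(H_{sr})\neq 1$ and $\Fit{H}=Q\times\mathbf{O}_s(H)$, so your structural assumption fails and your blanket conclusion that both $s$-cores are trivial is false in general. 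This is not a corner case: the lemma is applied in Proposition \ref{5cycle} precisely when $J=\mathbf{O}_{p_1}(H_{p_1p_3})$ is nontrivial (with $p_1\rightarrow p_3\rightarrow p_4$), so the whole content of (a) is to show that a nontrivial $\mathbf{O}_s(H_{sr})$ does not grow inside $H$, and conversely that $\mathbf{O}_s(H)$ lands inside $H_{sr}$. The paper's argument does exactly this: write $F=\Fit{H}=Q\times\mathbf{O}_s(H)$, note $K\cap F=\mathbf{O}_s(H)$ for $K=H_{sr}$, observe $K/(K\cap F)\cong H/F$ is Frobenius with an $r$-group kernel so its $s$-core is trivial, and conclude $\mathbf{O}_s(K)\leq K\cap F=\mathbf{O}_s(H)\leq\mathbf{O}_s(K)$. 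You need an argument of this kind in place of the claim that everything is trivial.
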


\begin{proof}
Write $F = \Fit{H}$ and $K = H_{sr}$.  Note that in each case, $\Gamma_H$ is disconnected, so $H$ must be Frobenius or $2$-Frobenius.

\begin{enumerate}
\item[(a)]  By Lemma \ref{2path}, we have that $H$ is $2$-Frobenius of type $(q,r,s)$ and thus, $F = Q \times \mathbf{O}_s (H)$ where $Q$ is the Sylow $q$-subgroup of $H$.  Because $H/F$ is a Frobenius group with an $r$-group kernel, we see that $\mathbf{O}_s (H/F) = 1$.  Observe that $H = KQ$, so $K/(F\cap K) \cong H/F$.  We see that $K \cap F$ is a Hall $\{ r,s \}$-subgroup of $F$, whence $K \cap F = \mathbf{O}_s (H)$.  Since $\mathbf{O}_s (H)$ is a normal $s$-subgroup of $K$, we then have $\mathbf{O}_s (H)\leqslant \mathbf{O}_s (K)$.  Since $\mathbf{O}_s \left(K/\mathbf{O}_s (H)\right) = 1$, we conclude that $\mathbf{O}_s (H) = \mathbf{O}_s (K)$.

\item[(b)]  If $H$ is a Frobenius group, then $\mathbf{O}_s (H) = 1$ and $S$ is a Frobenius complement.  By Lemma \ref{type (p,q,p)}, this implies that $K$ and $H_{qs}$ cannot be $2$-Frobenius.  Thus, $K$ and $H_{qs}$ are Frobenius groups, and $\mathbf{O}_s (K) = \mathbf{O}_s (H_{qs}) = 1 = \mathbf{O}_s (H)$ as desired.  Thus, we may assume that $H$ is $2$-Frobenius.  This implies that $\mathbf{O}_s (H) > 1$ and $F = \mathbf{O}_s(H)$; so $F\leqslant \mathbf{O}_s (H_{qs})$ and $F\leqslant \mathbf{O}_s (K)$.  Observe that $K/F$ and $H_{qs}/F$ will be Frobenius groups, so $\mathbf{O}_s (K/F) = 1 = \mathbf{O}_s (H_{qs}/F)$.  This implies that $\mathbf{O}_s (K) \le F$ and $\mathbf{O}_s (H_{qs}) \le F$, and we obtain the desired equalities.

\item[(c)]  If $\mathbf{O}_s (H) = 1$, then $S$ is a Frobenius complement.  By Lemma \ref{type (p,q,p)}, this implies that $K$ cannot be $2$-Frobenius.  Thus, $K$ is a Frobenius group and $\mathbf{O}_s (K) = 1 = \mathbf{O}_s (H)$ as desired.  Thus, we may assume that $\mathbf{O}_s (H) > 1$.  This implies that $H$ is a $2$-Frobenius group.  In particular, $F = \mathbf{O}_s (H) \times \mathbf{O}_q (H)$.  We deduce that $K \cap F = \mathbf{O}_s (H)$.  We know that $K/(K \cap F) \cong KF/F$ is a Frobenius group, so $\mathbf{O}_s (K/(K \cap F)) = 1$.  This implies that $\mathbf{O}_s (K) \le K \cap F = \mathbf{O}_s (H) \le \mathbf{O}_s (K)$, and we have the desired equality.
\end{enumerate}
\end{proof}

\begin{propositionN}\label{5cycle}
If $G$ is a solvable group with $\PG$ isomorphic to the $5$-cycle, then the Fitting length of $G$ is $\ell_F(G)=3$.
\end{propositionN}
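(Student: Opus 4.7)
Label the primes $p_1, \ldots, p_5$ so that the edges of the $5$-cycle $\PG$ are $p_i p_{i+1}$ (indices mod $5$); then $\overline{\PG}$ is also a $5$-cycle on the same vertex set. Up to the dihedral symmetry of the pentagon, the only acyclic orientation of $\overline{\PG}$ without directed $3$-paths has $\mathcal{O} = \{p_1, p_2\}$, $\mathcal{D} = \{p_3\}$, $\mathcal{I} = \{p_4, p_5\}$, with arrows $p_1 \to p_3 \to p_5$, $p_1 \to p_4$, $p_2 \to p_4$, $p_2 \to p_5$, and $\Pi = \{p_5\}$. By Lemma \ref{2path}, $H_{p_1 p_3 p_5}$ is a $2$-Frobenius group of type $(p_5, p_3, p_1)$ and so has Fitting length exactly $3$, giving $\ell_F(G) \geq 3$.

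For the upper bound, my plan is to show $\Fit{G} = P_4 P_5$ and that $\bar G := G/\Fit{G}$ is metanilpotent. Proposition \ref{HPiNilpotent} gives $P_5 \triangleleft G$, and since $P_1, P_2, P_3$ each occur as Frobenius complements in suitable $2$-prime Hall subgroups one obtains $\mathbf{O}_{p_i}(G) = 1$ for $i = 1, 2, 3$. The key step is $P_4 \triangleleft G$. Normalization by $P_1$ and $P_2$ is immediate from the Frobenius structures of $H_{p_1 p_4}$ and $H_{p_2 p_4}$; for normalization by $P_3$ and $P_5$, I will examine $H_{p_1 p_3 p_4}$ and $H_{p_2 p_4 p_5}$, whose induced prime graphs are disconnected (one isolated vertex plus one edge), so Williams' theorem forces each into Frobenius or $2$-Frobenius form. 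Applying Lemma \ref{theOs}(b) to the shapes $p_3 \leftarrow p_1 \to p_4$ and $p_4 \leftarrow p_2 \to p_5$ yields $\mathbf{O}_{p_1}(H_{p_1 p_3 p_4}) = 1$ and $\mathbf{O}_{p_2}(H_{p_2 p_4 p_5}) = 1$. These vanishings kill the Frobenius case whose kernel is the isolated-vertex Sylow, and a case check of each $2$-Frobenius type shows that the induced Frobenius action on $P_1$ (respectively $P_2$) would reverse one of the arrows of $\FG$, contradicting the global orientation. What survives is that $H_{p_1 p_3 p_4}$ is Frobenius with nilpotent kernel $P_3 \times P_4$ and $H_{p_2 p_4 p_5}$ is Frobenius with nilpotent kernel $P_4 \times P_5$, delivering $P_3, P_5 \leq N_G(P_4)$; hence $P_4 \triangleleft G$, and combined with the vanishings above, $\Fit{G} = P_4 P_5$.

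Writing $\bar G = G/\Fit{G}$, the quotient has primes $\{p_1, p_2, p_3\}$ and prime graph the path $p_1 - p_2 - p_3$. The $2$-Frobenius structure of $H_{p_1 p_3 p_5}$ descends to show $\bar H_{p_1 p_3}$ is Frobenius with kernel $P_3$. A parallel arrow-flipping analysis of $H_{p_2 p_3 p_5}$ rules out all structures except those in which $\bar H_{p_2 p_3}$ is cyclic; the same method applied to $H_{p_1 p_2 p_4}$ (where Lemma \ref{theOs}(c) now provides the $\mathbf{O}_{p_1}, \mathbf{O}_{p_2}$ vanishings) leaves only the possibility that $H_{p_1 p_2}$ is cyclic. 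Together these facts give $P_2, P_3 \triangleleft \bar G$, so $F_1(\bar G) = P_2 P_3$ using $\mathbf{O}_{p_1}(\bar G) = 1$. The further quotient $\bar G/(P_2 P_3)$ is then a $p_1$-group and hence nilpotent, so $\ell_F(\bar G) \leq 2$, with equality since $\bar G$ has incomplete prime graph and so is not itself nilpotent. Hence $\ell_F(G) = \ell_F(\bar G) + 1 = 3$.

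The crux of the plan is forcing $P_4 \triangleleft G$. Its role is pivotal: without this, $G/P_5$ retains its four-vertex path prime graph of diameter $3$, and Lucido's diameter-$3$ theorem would only deliver $\ell_F(G/P_5) \in \{3, 4\}$, leaving open the possibility $\ell_F(G) = 4$. The arrow-flipping contradictions enabled by the unique Frobenius digraph of the pentagon are what force the collapse of $\ell_F(G)$ to exactly $3$, and the same technique handles all of the subsequent case analyses for $P_2, P_3 \triangleleft \bar G$.
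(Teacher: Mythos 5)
Your identification of the unique Frobenius digraph of the pentagon and the lower bound $\ell_F(G)\geq 3$ via Lemma \ref{2path} agree with the paper, but the upper-bound argument has a genuine gap at exactly the step you call the crux. You assert that Lemma \ref{theOs}(b) applied to $p_3\leftarrow p_1\rightarrow p_4$ yields $\mathbf{O}_{p_1}(H_{p_1p_3p_4})=1$; it does not --- it gives only the equalities $\mathbf{O}_{p_1}(H_{p_1p_3p_4})=\mathbf{O}_{p_1}(H_{p_1p_3})=\mathbf{O}_{p_1}(H_{p_1p_4})$. For $p_2$ the corresponding vanishing is real, because $p_2\in N_\uparrow^1(p_5)$ and Corollary \ref{adjacent} forces $H_{p_2p_4}$ and $H_{p_2p_5}$ to be Frobenius; but $p_1$ lies in $N_\uparrow^2(p_5)$, and nothing forces $H_{p_1p_3}$ or $H_{p_1p_4}$ to be Frobenius. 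Both may be $2$-Frobenius, of types $(p_1,p_3,p_1)$ and $(p_1,p_4,p_1)$, and your arrow-flipping check does not exclude this: the Frobenius digraph orients $2$-Frobenius edges by the \emph{upper} action, so a fixed-point-free action of the $p_3$- or $p_4$-part on a normal $p_1$-subgroup at the bottom is precisely what the arrows $p_1\to p_3$, $p_1\to p_4$ permit. In that configuration $H_{p_1p_3p_4}$ is $2$-Frobenius with lower kernel a $p_1$-group rather than Frobenius with kernel $P_3\times P_4$; the Sylow $p_4$-subgroup is a Frobenius complement inside $F_2(H_{p_1p_4})$ and hence not normal even there, $\mathbf{O}_{p_1}(G)>1$, and (as the paper shows) $\Fit{G}=\mathbf{O}_{p_1}(G)\times P_5$, so both of your claims $P_4\lhd G$ and $\Fit{G}=P_4P_5$ fail. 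The paper's proof is organized around exactly this dichotomy: when $H_{p_1p_3}$ is $2$-Frobenius it proves that $J=\mathbf{O}_{p_1}(H_{p_1p_3})$ is normal in $G$ (using all three parts of Lemma \ref{theOs}) and then that every Sylow subgroup of $G/(J\times P_5)$ is cyclic. Your plan has no branch for this case, and you offer no argument that the case is vacuous.

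Even in the case $\Fit{G}=P_4\times P_5$, your route to $\ell_F(G/\Fit{G})\leq 2$ through the claims that $H_{p_2p_3}$ and $H_{p_1p_2}$ are cyclic is unsupported. The Frobenius structures only make $P_2$ a Frobenius complement, so $P_2$ may be generalized quaternion (with $p_2=2$); a quaternion complement violates no arrow of $\FG$, and Lemma \ref{theOs}(c) again supplies equalities of $\mathbf{O}$-subgroups, not cyclicity of a Hall subgroup whose prime graph is connected. This is the genuinely hard part of the paper's argument: if $P_2$ is generalized quaternion, then the Sylow $2$-subgroup of $\Fit{G/\Fit{G}}$ could a priori be $Q_8$ with $\operatorname{Aut}(Q_8)\cong S_3$ acting on top, i.e.\ a $2O$-type section threatening Fitting length $4$, and excluding it occupies most of the paper's proof (it forces $p_1=3$ and then derives a contradiction from the Sylow structure of $G'$). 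As written, your proposal contains no mechanism to rule this out, so it does not establish the equality $\ell_F(G)=3$.
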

\begingroup{
\noindent\emph{Proof}. Let $p_1,\ldots,p_5$ be the prime divisors of $|G|$.  Because $\overline\PG$ is also isomorphic to the $5$-cycle, we have by Corollary \ref{3Pcor} that up to isomorphism there exists only one possible Frobenius digraph of $G$.  Without loss of generality, we label $\FG$ as shown in Figure \ref{5cycleDigraph}.

\setlength\intextsep{0pt}
\begin{wrapfigure}[11]{l}{0.35\textwidth}
\begin{center}
\begin{tikzpicture}[%
    auto,
    block/.style={
      rectangle,
      draw=blue,
      thick,
      fill=blue!20,
      text width=5em,
      align=center,
      rounded corners,
      minimum height=2em
    },
    block1/.style={
      rectangle,
      draw=blue,
      thick,
      fill=blue!20,
      text width=5em,
      align=center,
      rounded corners,
      minimum height=2em
    },
    line/.style={
      draw,thick,
      -latex',
      shorten >=2pt
    },
    cloud/.style={
      draw=red,
      thick,
      ellipse,
      fill=red!20,
      minimum height=1em
    }
  ]
    \node (p1) at ( 0, 0) {$p_1$}; 
    \node (p2) at ( 1.375, 0) {$p_2$};
    \node (p3) at ( 0,-1.375) {$p_3$};
    \node (p4) at ( 0,-2.75) {$p_4$};
    \node (p5) at ( 1.375,-2.75) {$p_5$};
    \node (O) at (-1.875,0) {$\mathcal{O}$};
    \node (D) at (-1.875,-1.375) {$\mathcal{D}$};
    \node (I) at (-1.875,-2.75) {$\mathcal{I}$};
    \node (I) at (2.875,-2.75) {$\text{ }$};
    
    \begin{scope}[every path/.style={->}]
       \draw (p1) -- (p3);
       \draw (p3) -- (p4); 
       \draw (p1) -- (p5);
       \draw (p2) -- (p4);
       \draw (p2) -- (p5);
       \draw[thick,dotted]     ($(p1.north west)+(-0.5,0.25)$) rectangle ($(p2.south east)+(0.5,-0.15)$);
	   \draw[thick,dotted]     ($(p3.north west)+(-0.5,0.25)$) rectangle ($(p3.south east)+(1.875,-0.15)$);     
       \draw[thick,dotted]     ($(p4.north west)+(-0.5,0.25)$) rectangle ($(p5.south east)+(0.5,-0.15)$);
    \end{scope}  
\end{tikzpicture}
\caption{Frobenius Digraph of $G$.}
\label{5cycleDigraph}
\end{center}
\end{wrapfigure}
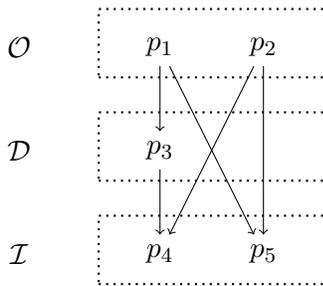
Observe that $\Pi = \{ p_4 \}$, $\II = \{ p_4, p_5 \}$, $\DD = \{ p_3 \}$, and $\OO = \{ p_1, p_2 \}$.   In particular, $H_{p_1p_3p_4}$ is $2$-Frobenius by Lemma \ref{2path}, so the Fitting length of $G$ is at least $3$.  We apply Proposition \ref{HPiNilpotent} to see that $P_4$ is normal in $G$.  Notice that the primes dividing $|\Fit{G}|$ must be adjacent to $p_4$ and to each other.

We first suppose that $H_{p_1p_3}$ is $2$-Frobenius, so $J = \mathbf{O}_{p_1} (H_{p_1p_3})$ is nontrivial.  We show that $J$ is normal in $G$ by showing that the normalizer of $J$ contains a Sylow subgroup for every prime dividing $|G|$.  It is obvious that $J$ is normalized by a Sylow $p_1$- and $p_3$-subgroup of $G$.  Let $H_{p_1p_3p_4}$ be a Hall $\{p_1,p_3,p_4\}$-subgroup containing $H_{p_1p_3}$.  By Lemma \ref{theOs}(a), we see that $J = \mathbf{O}_{p_1} (H_{p_1p_3p_4})$ and so $J$ is normalized by $P_4$.   Let $H_{p_1p_3p_5}$ be a Hall $\{p_1,p_3,p_5\}$-subgroup containing $H_{p_1p_3}$ and $H_{p_1p_5}$.   Applying Lemma \ref{theOs}(b), we see that $J = \mathbf{O}_{p_1} (H_{p_1p_3p_5}) = \mathbf{O}_{p_5} (H_{p_1p_5})$, and so $J$ is normalized by a Sylow $p_5$-subgroup of $G$.  Finally, we use Lemma \ref{theOs}(c) to see that $J = \mathbf{O}_{p_1} (H_{p_1p_2p_5})$, where $H_{p_1p_2p_5}$ is a Hall $\{ p_1, p_2, p_5 \}$-subgroup containing $H_{p_1p_5}$.  We conclude that a Sylow $p_2$-subgroup of $G$ normalizes $J$.  Thus we have shown that $J$ is normal in $G$.  It is not difficult to see that $J = \mathbf{O}_{p_1} (G)$.  Any prime divisor of $|\Fit{G}|$ must therefore be adjacent to $p_1$ in $\PG$, from which we conclude that $F = \Fit{G} = J \times P_4$.

We now argue that all the Sylow subgroups of $G/F$ are cyclic.  Observe that $F = \Fit{H_{p_1p_3p_4}}$, and since $H_{p_1p_3p_4}$ is $2$-Frobenius, we know by Lemma \ref{2 Frobenius} that all the Sylow subgroups of $H_{p_1p_3p_4}/F$ are cyclic.  This implies that the Sylow $p_1$- and $p_3$-subgroups of $G/F$ are cyclic.  Since $J = \mathbf{O}_{p_1} (H_{p_1p_5}) > 1$, we know that $H_{p_1p_5}$ is $2$-Frobenius, and thus a Sylow $p_5$-subgroup of $G$ is cyclic by Lemma \ref{type (p,q,p)}.  By Corollary \ref{adjacent}, we see that $H_{p_2p_5}$ must be a Frobenius group.  The Sylow $p_5$-subgroup of $H_{p_2p_5}$ is cyclic, so a Sylow $p_2$-subgroup must be cyclic as well.  Thus all the Sylow subgroups of $G/F$ are cyclic.  It follows that $G/F$ has Fitting length at most $2$, and so $G$ has Fitting length at most $3$.  For this case, this proves the theorem.

Next, suppose that $H_{p_1p_3}$ is a Frobenius group.  We show that $P_5$ is normal in $G$.  We know by Lemma \ref{type (p,q,r)} that $P_3$ is cyclic, so we have that $P_1$ is cyclic.  We can apply Lemma \ref{Frob sub1} to see that $H_{p_1p_3p_5}$ is a Frobenius group whose Frobenius kernel is a Hall $\{ p_3, p_5 \}$-subgroup of $G$.  This implies that the normalizer of $P_5$ contains Sylow $p_1$- and $p_3$-subgroups of $G$.  We see that $p_2 \in N_\uparrow^1 (p_4)$, so by Corollary \ref{adjacent}, a  $H_{p_2, p_4}$ is a Frobenius group, and by Lemma \ref{Frob sub1}, there is a Hall $\{p_2, p_4, p_5\}$-subgroup of $G$ that is Frobenius with kernel $P_4 P_5$.  This implies that $P_4$ centralizes $P_5$, and the normalizer of $P_5$ contains a Sylow $p_2$-subgroup of $G$.  We conclude that $P_5$ is normal in $G$.  Notice that all the prime divisors of $|F|$ are adjacent to $p_5$ in $\PG$, so $F = P_4 \times P_5$.

If all Sylow subgroups of $G/F$ are cyclic, then $G/F$ has Fitting length at most $2$, so $G$ has Fitting length at most $3$.  Thus we may assume that some Sylow subgroup of $G/F$ is not cyclic.  Since we know the Sylow $p_1$- and $p_3$-subgroups are cyclic, it must be that a Sylow $p_2$-subgroup is not cyclic.  We know that a Sylow $p_2$-subgroup $P_2$ is a Frobenius complement, and as it is not cyclic, we must have that $P_2$ is generalized quaternion.  Note that since $H_{p_1p_3}$ is a Frobenius group, if $p_3 = 3$, then we must have $p_1 = 2$, and this cannot occur since $p_2 = 2$.  Thus, we have that $p_3 \ne 3$.

Let $F_2/F=\Fit{G/F}$.  We know that $C_{G/F} (F_2/F) \le F_2/F$, so $G/F_2 \le \operatorname{Aut}(F_2/F)$.  If we assume that $F_2/F$ is cyclic, then $\operatorname{Aut}(F_2/F)$ will be abelian, so $G/F_2$ will also be abelian.  In this case, $G$ will have Fitting length at most $3$.  Thus, for the final step, we may assume that $F_2/F$ is not cyclic and work to obtain a contradiction.

Let $Q/F$ be the Sylow $2$-subgroup of $F_2/F$, and let $D/F$ be the Hall $2$-complement of $F_2/F$.  We know that $F_2/F = Q/F \times D/F$ and that $D/F$ is cyclic.  Thus, $G/F_2 \le \operatorname{Aut}(Q/F) \times \operatorname{Aut}(D/F)$.  We know that $\operatorname{Aut}(D/F)$ is abelian.  On the other hand, $Q/F$ is a subgroup of a generalized quaternion group, and therefore cyclic or generalized quaternion.  Thus $\operatorname{Aut}(Q/F)$ is abelian unless $Q/F$ is isomorphic to the quaternion group $Q_8$ of order $8$.  In this case, $\operatorname{Aut}(Q/F) \cong S_3$.  We have seen that the conclusion of the theorem holds if $G/F_2$ is abelian, so we may assume that $Q/F$ is isomorphic to $Q_8$.  If $C/F = C_{G/F} (Q/F)$, then $G/C \cong S_3$.  In particular, $G'C/C$ has order $3$, so $G'$ has a nontrivial Sylow $3$-subgroup.  Since $p_3 \ne 3$ and the only primes dividing $|G:F|$ are $p_1$, $p_2 = 2$, and $p_3$, this leaves us with $p_1 = 3$.

Notice $p_3$ does not divide $|G:C|$.  Hence, $C$ contains a Sylow $p_3$-subgroup $P_3$ of $G$.  Observe that $H_{p_1p_3} \cap F = 1$, so $H_{p_1p_3} \cong H_{p_1p_3}F/F$.  This implies that $p_1 = 3$ does not divide $|F_2/F|$, and thus, $D/F$ is a $p_3$-group.  Notice that $P_3$ centralizes $Q/F$ since $P_3 \le C$, and furthermore, since $D \le P_3F$ and $P_3 \cong P_3F/F$ is cyclic, we have since $P_3$ centralizes $D/F$.  Thus $P_3$ centralizes $F_2/F$, so $P_3F = D$.  Let $B/F = C_{G/F} (D/F)$.  Since $D/F$ is cyclic, we know that $G/B$ is abelian.  On the other hand, since $H_{p_1p_3}$ is a Frobenius group whose Frobenius kernel is $P_3$, we see that a Sylow $p_1$-subgroup of $B/F$ is trivial, and so $B$ has a trivial Sylow $p_1$-subgroup.  Since $G/B$ is abelian, $G' \le B$, and we conclude that $G'$ has a trivial Sylow $p_1$-subgroup.  However, recalling that $p_1=3$, we saw earlier that $G'$ has a nontrivial Sylow $p_1$-subgroup.  This contradiction completes the proof.
\hfill\qed}\endgroup

\section{Fitting Lengths and Minimal Prime Graphs.}

In this section, we expand on the ideas used in the proof of Proposition \ref{5cycle} to show that all solvable groups with minimal prime graphs have Fitting length at most $4$.  The lower bound of $3$ in Corollary \ref{FittingLength} is certainly best possible, since by Lemma \ref{not2colorable} there exists a $2$-Frobenius Hall subgroup of type $(p,q,r)$ that has Fitting length $3$.  The upper bound, on the other hand, may be improved by further examining the graph theoretic properties implied by minimality.  In our final theorem, we improve the upper bound from $5$ to $4$.

We must first present several results that give more detail regarding the structure of a minimal prime graph.  We begin by partitioning the sets $\mathcal{O}$ and $\mathcal{I}$ even further. As we will show in the following lemma, any $q \in \mathcal{O}$ is in contained in either $N_\uparrow^1 (p)$ or $N_\uparrow^2 (p)$ for any $p \in \mathcal{I}$.  With this in mind, we define $\mathcal{O}_1(p)=N_\uparrow^1 (p) \cap \mathcal{O}$, observing that $\mathcal{O} = \mathcal{O}_1(p)\cup N_\uparrow^2(p)$ is a disjoint union.  We then define the following sets. \begin{center}
$\displaystyle\mathcal{O}_1=\bigcup_{p\in \Pi}\mathcal{O}_1(p)$ \hspace{28pt} $\displaystyle\mathcal{O}_1^\star = \bigcap_{p \in \Pi}\mathcal{O}_1(p)$ \hspace{28pt} $\displaystyle\mathcal{O}_2=\bigcap_{p\in \Pi}N_\uparrow^2(p)$ \hspace{28pt} $\displaystyle\mathcal{O}_2^\star=\bigcup_{p\in \Pi}N_\uparrow^2(p)$ \end{center}
We observe that $\mathcal{O} = \mathcal{O}_1 \cup \mathcal{O}_2 = \mathcal{O}_1^\star \cup \mathcal{O}_2^\star$ are disjoint unions.  Finally, set $\Phi = \{p \in \mathcal{I} \mid N_\uparrow^2 (p) = \emptyset \}$.  We obtain as a result of part (b) in the following lemma that $\mathcal{I}=\Pi\cup \Phi$ is a disjoint union as well.

\begin{lemmaN}\label{LewisLemma456}
Let $G$ be a solvable group with a minimal prime graph.
\begin{enumerate}
\item[(a)] $\mathcal{O} \subseteq N_\uparrow^1 (p) \cup N_\uparrow^2 (p)$ for any prime $p \in \mathcal{I}$.
\item[(b)] $\mathcal{O} \subseteq N_\uparrow^1(p)$ for any prime $p \in \Phi$.
\item[(c)] If $s \in \mathcal{O}_2$, then $\mathcal{D} \subseteq N_\downarrow^1 (s)$.
\end{enumerate}
\end{lemmaN}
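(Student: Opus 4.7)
The plan is to leverage three ingredients: the minimality of $\PG$, the canonical $3$-coloring of $\overline{\PG}$ by the partition $\mathcal{O}\cup\mathcal{D}\cup\mathcal{I}$, and the no-directed-$3$-paths property from Corollary~\ref{3Pcor}. The common engine driving all three parts is this: whenever $x,y$ lie in distinct color classes of $\overline{\PG}$ and $xy \in \PG$, minimality forces $\overline{\PG}\cup\{xy\}$ to contain a triangle (since the partition still $3$-colors it), so $x$ and $y$ acquire a common non-neighbor $t$ in $\PG$, whose placement in $\mathcal{O}$, $\mathcal{D}$, or $\mathcal{I}$ is then pinned down by the way the edges $xt,yt\in \overline{\PG}$ must be oriented.

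For part (a), fix $p\in\mathcal{I}$ and $q\in\mathcal{O}$. If $pq\notin\PG$, the only orientation of $pq\in\overline{\PG}$ compatible with $q$'s zero in-degree and $p$'s zero out-degree is $q\to p$, placing $q\in N_\uparrow^1(p)$. Otherwise $pq\in\PG$, so the engine yields a common $\overline{\PG}$-neighbor $t$. The options $t\in\mathcal{O}$ and $t\in\mathcal{I}$ are ruled out because the edges $qt$ and $pt$ respectively would admit no legal orientation, so $t\in\mathcal{D}$; then $q\to t\to p$ is forced, giving $q\in N_\uparrow^2(p)$. Part (b) follows immediately, since $p\in\Phi$ means $N_\uparrow^2(p)=\emptyset$ by definition, collapsing (a) to $\mathcal{O}\subseteq N_\uparrow^1(p)$.

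Part (c) requires slightly more care. Take $s\in\mathcal{O}_2$ and $q\in\mathcal{D}$, and assume toward contradiction that $s\not\to q$; since $s\in\mathcal{O}$ this forces $sq\in\PG$. The engine provides a common $\overline{\PG}$-neighbor $t$ of $s$ and $q$, and I will rule out each of the three possible placements of $t$. The case $t\in\mathcal{O}$ is discarded because $st$ admits no orientation. If $t\in\mathcal{D}$, the orientation $s\to t$ is forced, and either orientation of $tq$ produces a directed $3$-path violating Corollary~\ref{3Pcor}: the orientation $t\to q$ gives $s\to t\to q\to v$ using any $v\in N_\downarrow^1(q)$, and the orientation $q\to t$ gives $r\to q\to t\to u$ using an in-neighbor $r$ of $q$ and an out-neighbor $u$ of $t$, both of which exist because $q,t\in\mathcal{D}$. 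If $t\in\mathcal{I}$, the orientations $s\to t$ and $q\to t$ are forced, and since $q$ has an in-neighbor $r$, the $2$-path $r\to q\to t$ places $t\in\Pi$; but then $s\in N_\uparrow^1(t)$ contradicts the hypothesis $s\in\mathcal{O}_2\subseteq N_\uparrow^2(t)$, as $N_\uparrow^1(t)$ and $N_\uparrow^2(t)$ are disjoint by definition of shortest-path distance.

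The only mildly delicate step will be the $t\in\mathcal{D}$ subcase of (c), which requires refuting both orientations of $tq$ via $3$-path extraction. The key is that $q\in\mathcal{D}$ contributes both an in-neighbor and an out-neighbor, while $t\in\mathcal{D}$ contributes an out-neighbor, supplying exactly the vertices needed to close off both possibilities at once.
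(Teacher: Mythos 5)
Your proof is correct and takes essentially the same approach as the paper: the minimality-plus-fixed-$3$-coloring "engine" producing a common $\overline{\PG}$-neighbor, with its location and the edge orientations pinned down by the zero in-/out-degrees of $\mathcal{O}$ and $\mathcal{I}$, and with (b) read off from (a). The only difference is that in part (c) you explicitly rule out the common neighbor lying in $\mathcal{D}$ via the directed $3$-path argument of Corollary \ref{3Pcor}, a case the paper's proof passes over silently when it asserts the common neighbor lies in $\mathcal{I}$.
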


\begin{proof}${}$
Suppose that $s \in \mathcal{O} \setminus N_\uparrow^1 (p)$.  Then $\overline{\PG} + \{sp\}$ admits the same $3$-coloring as $\overline \PG$, so by minimality $\overline{\PG} + \{sp\}$ contains a triangle.  It follows that there exists a prime  $d \in \mathcal{D}$ so that $s \rightarrow d \rightarrow p$ in$ \FG$.  This proves part (a).  For part (b), we know that if $p \in \Phi$, then $N_\uparrow^2 (p) = \emptyset$, so part (b) is an immediate consequence of part (a).

To prove part (c), suppose that $sd \in \PG$ for some prime $d\in \mathcal{D}$.  Again by minimality, there exists a prime $p \in \mathcal{I}$ such that $s \rightarrow p \leftarrow d$.  Since $d \in N_\uparrow^1 (p)$, we see that $N_\uparrow^2 (p)$ is nonempty, and thus, $p\in \Pi$.  However, by definition of $\mathcal{O}_2$, we obtain $s\in N_\uparrow^2 (p)$, contradicting Lucido's Three Primes Lemma.  Thus, $s \rightarrow d$ in $\FG$.
\end{proof}

We now present an application of Lemmas \ref{2path} and \ref{type (p,q,r)} and Corollary \ref{adjacent} from which we obtain a characterization of the Sylow subgroups for primes in $\DD$ and in $\OO$.

\begin{lemmaN} \label{in DD}
Let $G$ be a solvable group such that $\Gamma_G$ is a minimal prime graph.
\begin{enumerate}
\item[(a)]  If $q \in \DD$ and $p \in N_{\downarrow}^1 (q)$, then $q \ne 2$, a Sylow $q$-subgroup of $G$ is cyclic, and a Hall $\{p,q\}$-subgroup of $G$ is a Frobenius group for which  Sylow $p$-subgroup is the Frobenius kernel and a Sylow $q$-subgroup is a Frobenius complement.
\item[(b)] If $s \in \OO_1$ and $t \in N_{\downarrow}^1 (s)$, then a Hall $\{s,t\}$-subgroup of $G$ is a Frobenius group whose Frobenius kernel is a Sylow $t$-subgroup of $G$ and a Sylow $s$-subgroup of $G$ is a Frobenius complement.  In particular, a Sylow $s$-subgroup of $G$ is either cyclic or generalized quaternion.
\item[(c)] If $s \in \OO_2^*$, then a Sylow $s$-subgroup of $G$ is not generalized quaternion.
\end{enumerate}
\end{lemmaN}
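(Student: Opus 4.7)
The plan is to handle all three parts by first producing an appropriate directed 2-path in $\FG$, then promoting it via Lemma \ref{2path} to a 2-Frobenius Hall subgroup of a specified type, and finally reading off the Sylow structure using Lemma \ref{2 Frobenius}, Corollary \ref{type (p,q,p)}, and Lemma \ref{type (p,q,r)}. Parts (a) and (c) should be short, while (b) will require stitching information from several Hall subgroups together.

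For part (a), the hypothesis $q \in \mathcal{D}$ yields some $r \in N_\uparrow^1(q)$, which together with the given $p$ gives a 2-path $r \to q \to p$ in $\FG$. Lemma \ref{2path} then makes $H_{pqr}$ a 2-Frobenius group of type $(p,q,r)$, so by that definition $H_{pq}$ is Frobenius with kernel $P$ and complement $Q$. Lemma \ref{type (p,q,r)} gives that $Q$ is cyclic, and since $Q$ is isomorphic to the upper kernel of $H_{pqr}$, Lemma \ref{2 Frobenius} forces $|Q|$ to be odd, so $q \neq 2$. Part (c) is analogous: $s \in \mathcal{O}_2^\star$ provides a 2-path $s \to q \to p$ in $\FG$ for some $p \in \Pi$, Lemma \ref{2path} promotes this to a 2-Frobenius Hall subgroup $H_{pqs}$ of type $(p,q,s)$, and Lemma \ref{type (p,q,r)} directly states that the Sylow $s$-subgroup (which coincides with a Sylow $s$-subgroup of $G$) is not generalized quaternion.

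The work concentrates in part (b). Since $s \in \mathcal{O}_1$, there is some $p \in \Pi$ with $s \to p$ in $\FG$, and since $p \in \Pi$ there is a 2-path $r \to q \to p$ ending at $p$. Applying Lemma \ref{2path} followed by Lemma \ref{type (p,q,r)} to $H_{pqr}$ shows that the Sylow $p$-subgroup is not cyclic. The edge $s \to p$ forces $H_{sp}$ to be either Frobenius (with kernel $P$ and complement $S$) or 2-Frobenius of type $(s,p,s)$; the latter would force $P$ cyclic by Corollary \ref{type (p,q,p)}, contradicting the previous step. Thus $H_{sp}$ is Frobenius and $S$ is a Frobenius complement, whence $S$ is cyclic or generalized quaternion. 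The main obstacle is then the Frobenius conclusion for an arbitrary $t \in N_\downarrow^1(s)$: if $H_{st}$ were 2-Frobenius of type $(s,t,s)$, the argument in the proof of Corollary \ref{type (p,q,p)} would give that $S$ has more than one subgroup of order $s$; but as a Frobenius complement inside $H_{sp}$, $S$ has a unique subgroup of prime order. The crux is that the very same Sylow $s$-subgroup $S$ sits inside both $H_{sp}$ and $H_{st}$, so this subgroup-of-order-$s$ count transfers between the two Hall subgroups, forcing $H_{st}$ to be Frobenius with kernel $T$ and complement $S$.
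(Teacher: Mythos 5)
Your proposal is correct and takes essentially the same route as the paper: parts (a) and (c) are exactly the paper's argument (produce the relevant $2$-path, apply Lemma \ref{2path}, then read off the structure from Lemma \ref{type (p,q,r)} and Lemma \ref{2 Frobenius}). The only difference is in (b), where the paper simply cites Corollary \ref{adjacent} while you re-derive its content from Lemmas \ref{2path}, \ref{type (p,q,r)} and Corollary \ref{type (p,q,p)} (including the unique-subgroup-of-prime-order argument for the Frobenius complement), which is the same underlying argument, just inlined.
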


\begin{proof}
Suppose $q \in \DD$ and $p \in N_{\downarrow}^1 (q)$.  There exists $r \in \pi (G)$ so that $r \rightarrow q \rightarrow p \in \overrightarrow {\Gamma_G}$.  By Lemma \ref{2path}, a Hall $\{ p, q, r \}$-subgroup of $G$ is a $2$-Frobenius group of type $(r,q,p)$.  In light of the definition of type $(p,q,r)$ and Lemma \ref{type (p,q,r)}, we see that $q \ne 2$ and a Sylow $q$-subgroup of $G$ is cyclic, and a Hall $\{ p, q \}$-subgroup is a Frobenius group whose Frobenius kernel is a Sylow $p$-subgroup of $G$.  This proves part (a).

To prove part (b), we suppose $s \in \OO_1$ and $t \in N_{\downarrow}^1 (s)$.  Let $p$ be a prime in $\Pi$ so that $s \in \OO_1 (p)$.  There there exist primes $q, r \in \pi (G)$ so that $r \rightarrow q \rightarrow p \in \overrightarrow {\Gamma_G}$.  We now apply Corollary \ref{adjacent} to see that a Hall $\{ s, t \}$-subgroup of $G$ is a Frobenius group whose Frobenius kernel is a Sylow $t$-subgroup of $G$ and where a Sylow $s$-subgroup is a Frobenius complement.

Finally, suppose $s \in \OO_2^*$.  There exist primes $p,q \in \pi (G)$ so that $s \rightarrow q \rightarrow p \in \FG$.  This implies that a Hall $\{p, q, s \}$-subgroup is a $2$-Frobenius of type $(p,q,s)$.  By Lemma \ref{type (p,q,r)}, we conclude that a Sylow $s$-subgroup of $G$ is not generalized quaternion.
\end{proof}

With these results in mind, we are able to locate $2$ if $G$ has a Sylow $2$-subgroup that is generalized quaternion.

\begin{lemmaN}\label{LewisLemma12}
Suppose $G$ is a solvable group so that $\PG$ is a minimal prime graph.  If a Sylow $2$-subgroup of $G$ is generalized quaternion, then $2 \in \mathcal{O}_1^\star$.
\end{lemmaN}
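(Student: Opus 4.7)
The plan is to reduce the claim to three membership facts about the prime $2$: that $2 \notin \mathcal{D}$, $2 \notin \mathcal{I}$, and $2 \notin \mathcal{O}_2^\star$. Since $\pi(G) = \mathcal{O} \sqcup \mathcal{D} \sqcup \mathcal{I}$ and $\mathcal{O} = \mathcal{O}_1^\star \sqcup \mathcal{O}_2^\star$ is a disjoint union (a consequence of Lemma \ref{LewisLemma456}(a) together with the definitions of $\mathcal{O}_1^\star$ and $\mathcal{O}_2^\star$), these three exclusions will together place $2$ in $\mathcal{O}_1^\star$.

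Two of the three exclusions are immediate consequences of Lemma \ref{in DD}. Part (a) of that lemma says that every prime in $\mathcal{D}$ is odd, so $2 \notin \mathcal{D}$. Part (c) says that any $s \in \mathcal{O}_2^\star$ has Sylow $s$-subgroup that is not generalized quaternion; contrapositively, the hypothesis that the Sylow $2$-subgroup of $G$ is generalized quaternion gives $2 \notin \mathcal{O}_2^\star$.

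The substantive step will be verifying $2 \notin \mathcal{I}$, which I plan to show by contradiction. Suppose $2 \in \mathcal{I}$. By Lemma \ref{no singletons}, the vertex $2$ is not isolated in $\overline{\PG}$, so there exists a prime $p$ with $p \to 2$ in $\FG$. By the orientation convention defining $\FG$, the Hall $\{p,2\}$-subgroup $H_{p2}$ is then either a Frobenius group with Sylow $2$-subgroup as its kernel, or a $2$-Frobenius group of type $(p,2,p)$. The $2$-Frobenius possibility is excluded by Corollary \ref{type (p,q,p)}, which forces the Sylow $2$-subgroup in that case to be cyclic, contradicting the assumption that it is generalized quaternion of order at least $8$. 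The Frobenius possibility will be excluded by the classical observation that a generalized quaternion group cannot be a Frobenius kernel: its center contains the unique element of order $2$ in the group, and this element is characteristic and hence fixed by every automorphism, so no nontrivial fixed-point-free automorphism of the group exists, whereas the complement must act fixed-point-freely on the kernel.

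The only real obstacle is articulating the Frobenius kernel restriction for generalized quaternion groups in the correct form; once that classical fact is in hand, the rest is a clean case analysis that leans directly on Lemma \ref{in DD}, Corollary \ref{type (p,q,p)}, and the orientation conventions of the Frobenius digraph.
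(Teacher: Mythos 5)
Your proposal is correct and follows essentially the same route as the paper: you exclude $2$ from $\mathcal{D}$ and $\mathcal{O}_2^\star$ via Lemma \ref{in DD}(a) and (c), exclude it from $\mathcal{I}$ using the fact that a generalized quaternion group is neither cyclic nor a Frobenius kernel, and conclude from the disjoint union $\mathcal{O} = \mathcal{O}_1^\star \cup \mathcal{O}_2^\star$. Your handling of $2 \notin \mathcal{I}$ is a mild streamlining of the paper's (you produce an in-neighbor directly from Lemma \ref{no singletons} and settle the Frobenius and type-$(p,2,p)$ cases together via Corollary \ref{type (p,q,p)}, whereas the paper splits $\mathcal{I}$ into $\Pi$ and $\Phi$ and cites Lemmas \ref{type (p,q,r)} and \ref{LewisLemma456}(b)), but the underlying facts and overall decomposition are the same.
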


\begin{proof}
Let $p$ be a prime in $\II$.  If $p \in \Pi$, then the assertion is true by Lemma \ref{type (p,q,r)}.  We need to deal with the case where $p$ is not in $\Pi$.  We know that if $q \in \OO$, then $q \rightarrow p$ by Lemma \ref{LewisLemma456}(b).  $H_{qp}$ is either Frobenius or $2$-Frobenius, and in either case, a Sylow $p$-subgroup is isomorphic to a Frobenius complement.  Sylow subgroups for primes in $\mathcal{D}$ are cyclic by Lemma \ref{in DD} (a).  Generalized quaternion groups are not cyclic and may not be Frobenius kernels, so $2\notin\mathcal{I}\cup \mathcal{D}$.  By Lemma \ref{in DD}(c), Sylow subgroups for primes in $\OO_2^*$ are not generalized quaternion. Therefore, since $\mathcal{O}=\mathcal{O}_1^\star\cup \mathcal{O}_2^\star$ is a disjoint union, we conclude that $2 \in \OO_1^*$.
\end{proof}

We can further characterize the Sylow subgroups in $\mathcal{O}_2$ by dividing them into those which are cyclic and those which are not.  Let $\mathcal{C}$ denote the primes in $s \in \mathcal{O}$ for which $S$ is cyclic and set $\mathcal{N} = \mathcal{O} \setminus \mathcal{C}$.  Note that when $s \in \mathcal{O}_2$ and $t \in N_\downarrow^1(s)$, we have by Lemma \ref{complement} that $H_{st}$ is Frobenius if and only if $s\in\mathcal{C}$.

\begin{theoremN}\label{Fit4}
Let $G$ be a solvable group such that $\PG$ is minimal.  Then $\ell_F(G) \leq 4$.  Furthermore, if $\ell_F(G) = 4$, then $2O$ is a normal section of $G$.
\end{theoremN}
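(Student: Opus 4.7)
The plan is to strengthen Proposition \ref{HPiNilpotent} into a level-by-level reconstruction of the Fitting series, extending and generalizing the strategy used in Proposition \ref{5cycle}.

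First, by Proposition \ref{HPiNilpotent}, $H_\Pi \leqslant F_1 := \Fit G$. I would then show $H_\DD \leqslant F_2$ by proving, for each $q \in \DD$, that $QF_1/F_1$ is normal in $G/F_1$. The verification is by cases on the other prime $s$: when $s \to q$ in $\FG$ the claim follows from Lemma \ref{in DD}(a) and Corollary \ref{adjacent}; when $sq \in \PG$ with $s \in \OO$, the minimality hypothesis forces a prime $t$ with $ts, tq \in \overline{\PG}$, and Lemmas \ref{Frob sub1} and \ref{theOs} applied to a suitable Hall $\{s,t,q\}$-subgroup yield the required normalization modulo $F_1$; parallel cases handle $s \in \DD \cup \II$. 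The same machinery, together with Lemma \ref{LewisLemma456}(b) (which places every $s \in \OO$ in $N_\uparrow^1(p)$ when $p \in \Phi$), then gives $H_\Phi \leqslant F_2$.

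At this point, the primes of $G/F_2$ all lie in $\OO = \OO_1 \cup \OO_2$, and Lemmas \ref{in DD}(b), \ref{in DD}(c), and \ref{LewisLemma12} together imply that every Sylow subgroup of $G/F_2$ is a Frobenius complement and hence cyclic, except possibly the Sylow $2$-subgroup in the exceptional case $2 \in \OO_1^\star$ where it may be generalized quaternion. If all $\OO$-Sylows are cyclic, then $G/F_2$ is a $Z$-group, hence metacyclic, and $\ell_F(G) \leq 4$ is immediate. Otherwise, I would mimic the final paragraphs of the proof of Proposition \ref{5cycle}: the centralizer of the $Q_8$-subquotient of $\Fit(G/F_2)$ together with the isomorphism $\Aut{Q_8} \cong S_3$ forces $G/F_3$ to be abelian, again yielding $\ell_F(G) \leq 4$.

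Suppose now that $\ell_F(G) = 4$. Then the analysis above localizes the extra Fitting level precisely to a generalized quaternion Sylow $2$-subgroup acted upon by a cyclic Sylow $3$-subgroup to produce ${\rm SL}_2(3)$, together with a non-trivial $C_2$-action from $G/F_3$; this explicitly exhibits $2O$ as a normal section of $G$. Alternatively, combining the Hall-subquotient argument of Corollary \ref{FittingLength} with \cite[Prop.~3]{LucidoTree} applied to an induced diameter-$3$ subquotient reaches the same conclusion.

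The main obstacle will be the $\DD$ and $\Phi$ arguments in the first paragraph: for every non-adjacency $sq \in \PG$, the minimality condition must be exploited to produce a compatible triangle in $\overline{\PG}$, and the resulting Hall $3$-subgroups must be dissected using the $\mathbf{O}_p$-lemmas to extract normalizers modulo $F_1$. A secondary obstacle is the bookkeeping in the generalized quaternion case, where the $\mathcal{C}$-versus-$\mathcal{N}$ dichotomy among $\OO$-primes must be wielded carefully to ensure no extra Fitting level slips in between $F_3$ and $G$, and to track exactly where the $2O$ structure arises.
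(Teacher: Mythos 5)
Your overall shape (push everything you can into the bottom of the Fitting series, then argue that what survives has cyclic or generalized quaternion Sylow subgroups and control the top by automorphism groups) is the same reduction the paper uses, but as written the plan has genuine gaps, and they sit exactly where the paper does its real work. The central one is your claim that every Sylow subgroup of $G/F_2$ is a Frobenius complement. For a prime $s\in\NN$, i.e.\ $s\in\OO$ with non-cyclic Sylow $s$-subgroup $S$, Lemma \ref{in DD}(c) and Lemma \ref{LewisLemma12} show $S$ is not generalized quaternion either, so $S$ is \emph{not} a Frobenius complement; the lemmas you cite concern Sylow subgroups of $G$, not of $G/F_2$, and for the primes in $\NN$ they point the opposite way. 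What actually has to be proved is that the non-cyclic part of $S$ lies below the quotient you pass to, i.e.\ that $\mathbf{O}_s(H_{rs})=\mathbf{O}_s(G)$ for a suitable $r\in\DD$, so that $S\mathbf{O}_s(G)/\mathbf{O}_s(G)$ is cyclic by Lemma \ref{2 Frobenius}. Establishing that normality is the longest argument in the paper's proof (a normalizer chase through Lemma \ref{theOs}(a)--(c) and an analysis of $C_G(H_\Pi)$), and nothing in your outline addresses it. Similarly, $H_\DD\leqslant F_2$ and $H_\Phi\leqslant F_2$ are plausible but are not consequences of the lemmas you list; you flag them as "the main obstacle," but they would need arguments comparable in length to Proposition \ref{HPiNilpotent}.

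Two further gaps. First, the arithmetic: $F_2$ is in general not nilpotent, so cyclic/generalized-quaternion Sylow subgroups in $G/F_2$ only give $\ell_F(G)\le \ell_F(F_2)+\ell_F(G/F_2)\le 2+3=5$ once a $Q_8$ can occur in $\Fit{G/F_2}$, because $\Aut{Q_8}\cong S_3$ \emph{permits} $G/F_3$ to be non-abelian; your assertion that the $S_3$-action "forces $G/F_3$ to be abelian" is backwards and unproven (in Proposition \ref{5cycle} the corresponding step needed a delicate ad hoc contradiction, not a general principle). The paper avoids this entirely by anchoring the reduction at a \emph{nilpotent} normal subgroup $X$ (either $H_{\II}$ or $L=H_\Pi\times\prod_{s\in\NN}\mathbf{O}_s(G)$), so that even when the full $S_3$ acts the count is $1+1+2=4$. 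Second, the "furthermore" clause: in your decomposition $\ell_F(G)=4$ could a priori arise as $2+2$ with no quaternion involvement at all, so the claim that length $4$ "localizes" to a generalized quaternion Sylow $2$-subgroup does not follow; and the fallback via Corollary \ref{FittingLength} together with \cite[Prop.~3]{LucidoTree} only yields information about the proper quotient $G/H_\Delta$, not a $2O$ normal section of $G$ nor the bound $4$. The paper obtains the $2O$ section from the nilpotent-$X$ reduction by a Dedekind-lemma argument identifying the nonsplit extension, and that derivation has no counterpart in your sketch.
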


\begin{proof}
We first show that it suffices to find a nilpotent normal subgroup $X$ such that all subgroups of $G/X$ are cyclic or generalized quaternion.  If such a subgroup $X$ exists, then let $E/X$ be the Fitting subgroup of $G/X$.  (This means that $E$ is the full pre-image in $G$ of the Fitting subgroup of $G/X$.)  It follows that the Sylow subgroups of $E/X$ are either cyclic or generalized quaternion.  Recall that the automorphism group of a cyclic group or a generalized quaternion group that is not the quaternions is abelian, and that the automorphism group of the quaternions is isomorphic to $S_3$, and so, has Fitting length $2$.  We know that $C_{G/X} (E/X) \leq E/X$, so $G/E$ is isomorphic to a subgroup of $\Aut{E/X}$.  It is not difficult to see that $\Aut{E/X}$ is isomorphic to a direct product of the automorphism groups of its Sylow subgroups, so $\Aut{E/X}$ has Fitting length at most $2$.  This implies that $G/E$ has Fitting length at most $2$.  Since $E/X$ and $X$ are both nilpotent, it follows that $\ell_F(G) \leq 4$.

Notice that if $G$ has Fitting length $4$, $S_3$ must be isomorphic to a subgroup of $G/E$.  We then have that the Sylow $2$-subgroup of $E/X$ is the quaternion group.  Let $D/X$ be the Hall $2$-complement of $E/X$.  We know that $E/D$ is isomorphic to the quaternion group $Q_8$ of order $8$.  Let $C/D = C_{G/D}\left(E/D\right)$.  Let $T/D$ be a Sylow $2$-subgroup of $CE/D$.  We know that $T/D$ is generalized quaternion since it is a nonabelian subgroup of a generalized quaternion group.  By Dedekind's lemma, $T = (C \cap T)E$, and this implies that $T = E$.  In particular, $(C \cap E)/D$ is a Sylow $2$-subgroup of $C/D$.  Let $B/D$ be a $2$-complement for $C/D$.  Since $(C \cap E)/D$ is normal and has order $2$, it is central.  This implies that $B/D$ is normal in $C/D$, and hence characteristic.  Hence $B$ is normal in $G$.  We have $B \cap E = D$, so $BE/B \cong E/D$ is isomorphic to $Q_8$.  Also, $G/CE = G/BE$ is congruent to $S_3$.  This implies that $G/B$ is an extension of $A_4$ by $Z_2$.  Since the Sylow $2$-subgroup is generalized quatenion, it cannot be a split extension. Thus, it is the nonsplit extension, and hence is isomorphic to $2O$.  It is not difficult to show that $G/X$ will have a normal subgroup isomorphic to $2O$.

By definition, every Sylow subgroup of $G$ for primes in $\CC$ are cyclic.  Applying Lemma \ref{in DD} (a) and (b), we know that the Sylow subgroups of $G$ for primes in $\DD \cup \OO_1$ are cyclic or generalized quaternion.

We first prove the theorem under the additional hypothesis that $\mathcal{N} = \emptyset$.  In this case, every prime divisor of $|G:H_\mathcal{I}|$ lies in $\mathcal{D} \cup \mathcal{C} \cup \mathcal{O}_1$.  If $H_{\II}$ is normal in $G$, then every Sylow subgroup of $G/H_{\II}$ will be cyclic or generalized quaternion.  We see that it suffices to show that $H_\mathcal{I}$ is nilpotent and normal in $G$, since then we can apply the first paragraph with $X = H_{\II}$.

If $\Phi = \emptyset$, then $\II = \Pi$.  Hence $H_{\II} = H_{\Pi}$ is normal and nilpotent by Proposition \ref{HPiNilpotent}, so we are done.  Therefore we suppose that $\Phi$ is nonempty.  It suffices to show that $P$ is normal in $G$ for every prime $p \in \Phi$.  We do this by showing that the normalizer of $P$ contains a Sylow subgroup for every prime in $\pi (G)$.  Suppose $s\in \mathcal{O}$, so $s \in \mathcal{C}$ or $s \in \mathcal{O}_1$.  By Lemma \ref{LewisLemma456}(b), $s \rightarrow p$ in $\FG$.  Thus, if $H_{sp}$ is a Hall $\{ s, p \}$-subgroup of $G$ containing $P$, then $H_{sp}$ is either Frobenius or $2$-Frobenius of type $(s,p,s)$.  If $H_{sp}$ is a $2$-Frobenius group, then Lemma \ref{type (p,q,p)} would imply that a Sylow $s$-subgroup is not a Frobenius complement;  however, this contradicts the observation we made in the second paragraph that Sylow subgroups for primes in $\CC \cup \OO_1$ are cyclic or generalized quaternion.  Thus, $P$ is normalized by some Sylow $s$-subgroup of $G$.

Next, consider a prime $q \in \mathcal{D}$, and by definition.  Take $r \in \OO_2 = \CC$ so that $r \rightarrow q$.  Again, by Lemma \ref{LewisLemma456} (b), we have that $r \rightarrow p$ in $\FG$.  We know that $H_{rq}$ is either a Frobenius group or a $2$-Frobenius group of type $(r,q,r)$, and since $R$ is cyclic, $H_{rq}$ cannot be a $2$-Frobenius group by Lemma \ref{type (p,q,p)}.  Thus, we may apply Lemma \ref{Frob sub1} to see that $H_{rqp}$ is a Frobenius group whose Frobenius kernel is a Hall $\{ q, p \}$-subgroup.  In particular, $P$ is centralized by some Sylow $q$-subgroup of $G$.

Suppose now that we have a prime $t \in \mathcal{I}$ such that $\mathcal{O}_1 (t)$ is nonempty.  Let $r \in \mathcal{O}_1 (t)$.  By Lemma \ref{LewisLemma456}(b) we see that $r \rightarrow  p$.  Applying Lemma \ref{in DD} (b), we see that $H_{rp}$ is a Frobenius group.  Using Lemma \ref{Frob sub1}, we see that $H_{rtp}$ is a Frobenius group whose Frobenius kernel is a Hall $\{ t, p \}$-subgroup.  In particular, $P$ is centralized by some Sylow $t$-subgroup of $G$.

Let $\Pi^\prime = \{q \in \II : \mathcal{O}_1 (q) = \emptyset \}$.  If $\Pi^\prime = \emptyset$, then $P$ is centralized by a Sylow $t$-subgroup for every prime $t \in \Pi$.  This implies that $P$ is normalized by a Sylow subgroup for every prime in $\pi (G)$, so $P$ is normal in $G$.  Since this holds for every prime in $\Phi$, we conclude that $H_{\II}$ is normal and nilpotent, and as we have observed, the result is proved in this case.

We now suppose that $\Pi^\prime$ is nonempty.  It follows that $\mathcal{O}_1^\star = \emptyset$.  In light of Lemma \ref{LewisLemma12}, a Sylow $2$-subgroup of $G$ cannot be generalized quaternion, so a Sylow $q$-subgroup is cyclic for every prime $q \in \mathcal{O}_1$.  Note that all prime divisors of $|G:H_\Pi|$ lie in $\mathcal{C} \cup \mathcal{O}_1 \cup \mathcal{D} \cup \Phi$.  We continue to consider a prime $p \in \Phi$.  We have seen that the Sylow subgroups for primes in $\CC \cup \OO_1 \cup \DD$ normalize $P$.  If $q \in \Phi$, then we know by Lemma \ref{LewisLemma456}(b) that $\OO_1 (q)$ contains $\OO$, so $\OO_1 (q)$ is not empty.  We see from the previous paragraph that a Sylow $q$-subgroup of $G$ normalizes $P$.  Hence $PH_\Pi/H_\Pi$ is normal in $G/H_\Pi$.  As this is true for all the primes in $\Phi$, it follows that $H_\mathcal{I}/H_\Pi$ is normal in $G/H_\Pi$ and nilpotent.

The prime divisors of $|G:H_{\II}|$ must lie in $\mathcal{C} \cup \mathcal{O}_1 \cup \mathcal{D}$, and we have seen that in this case the Sylow subgroups for primes in these three sets are cyclic.  Thus all Sylow subgroups of $G/H_{\II}$ are cyclic.  We then have that $G/H_{\II}$ has Fitting length at most $2$.  We claim that $D/H_{\II} = \Fit{G/H_{\II}}$ is a Hall $\DD$-subgroup of $G/H_{\Pi}$.  Consider a prime $s \in \CC \cup \OO_1$.  Since $\OO_1^* = \emptyset$, there is a prime $r \in \DD$, so that $s \rightarrow r$ in $\FG$.  Similarly, if $r \in \DD$, we can find a prime $s \in \CC \cup \OO_1$ so that $s \rightarrow r$ in $\FG$.  In both cases, we have that $H_{rs}$ is a Frobenius group.  This implies that $s$ does not divide $|D:H_{\Pi}|$ and $r$ does not divide $|G:D|$, which proves the claim.
  
We have seen that, when $p\in \Phi$, $P$ is centralized by a Sylow $d$-subgroup for every $d\in\DD$.  This implies that there is a Hall $\DD$-subgroup $H_\DD$ of $G$ that centralizes $P$.  Since this is true for every prime in $\Phi$, we can find a Hall $\Phi$-subgroup $H_\Phi$ of $G$ that is centralized by $H_{\DD}$.  We conclude that $D/H_{\Pi} \cong H_\Phi \times H_\DD$.  We have seen that $H_\Phi \cong H_\II/H_\Pi$ and $H_\DD \cong D/H_\II$ are nilpotent.  It follows that $D/H_{\Pi}$ is nilpotent.  Since $G/D$ and $H_{\Pi}$ are both nilpotent, we conclude that $\ell_F(G) \leq 3$.  This proves the result when $\mathcal{N}$ is empty.

We now suppose that $\mathcal{N}$ is nonempty.  Let $L = H_\Pi \times \prod_{s \in \mathcal{N}} \mathbf{O}_s (G)$.  Clearly, $L$ is nilpotent and normal in $G$.  We now show that all the Sylow subgroups of $G/L$ are cyclic or generalized quaternion. We then obtain the result by applying the first paragraph with $X = L$.  Observe that the primes dividing $|G:L|$ lie in $\NN \cup \CC \cup \OO_1 \cup \DD \cup \Phi$.  We have seen that Sylow subgroups for primes in $\CC \cup \OO_1 \cup \DD$ must be cyclic or generalized quaternion.  Thus, we need only consider primes in $\Phi \cup \NN$.

Consider first a prime $q \in \Phi$.  By Lemma \ref{LewisLemma456}(b), we know that $q \in N_\downarrow^1(s)$ for every prime $s \in \NN$.  Fix a prime $s \in \NN$.  Observe that $s \in \OO_2^*$, so a Sylow $s$-subgroup $S$ is not generalized quaternion.  Since $S$ is also not cyclic, we conclude that $S$ is not a Frobenius complement.  We know that $H_{qs}$ is either Frobenius or $2$-Frobenius of type $(s,q,s)$.  Since $S$ is not a Frobenius complement, we see that $H_{qs}$ is not a Frobenius group.  Thus, $H_{qs}$ must be $2$-Frobenius of type $(s,q,s)$, and we may use Lemma \ref{type (p,q,p)} to see that a Sylow $q$-subgroup is cyclic.  Hence, Sylow subgroups for every prime in $\Phi$ are cyclic.

We now consider a prime $s \in \NN$.  Since $s \in \mathcal{O}_2$, we know that there is a prime $r\in \mathcal{D}$ so that $s\rightarrow r$.  Consider a Hall $\{ r, s \}$-subgroup $H_{rs}$ of $G$.  Let $S$ be a Sylow $s$-subgroup contained in $H_{rs}$, and let $J = \mathbf{O}_s (H_{rs})$.  By Lemma \ref{LewisLemma12}, $S$ is not generalized quaternion.  Since we also know that $S$ is not cyclic, and $H_{rs}$ is either Frobenius or $2$-Frobenius, we deduce that $H_{rs}$ is not Frobenius.   Thus, $H_{rs}$ is a $2$-Frobenius group of type $(s,r,s)$.  By Lemma \ref{2 Frobenius}, $S/J$ must be cyclic.
If $J = {\bf O}_s (G)$, then $S \cap L = J$, and so, $S/J \cong SL/L$.  Since $SL/L$ is a Sylow $s$-subgroup of $G/L$, this implies that Sylow $s$-subgroups of $G/L$ are cyclic.  This implies that all of the Sylow subgroups of $G/L$ are cyclic or generalized quaternion.  Therefore we may once again obtain the result by applying the first paragraph with $X = L$.  Thus, the theorem will be proved once we prove that $J = {\bf O}_s (G)$.

We now prove that $J = \mathbf{O}_s (G)$.  Observe first that $\mathbf{O}_s (G) \leqslant J$.  We show that $J$ is normal in $G$ by showing that the normalizer of $J$ contains a Sylow subgroup for every prime in $\pi (G)$.  Once we know $J$ is normal in $G$, then we will have $J = {\bf O}_s (G)$.  By Lemma \ref{LewisLemma456} (b) and (c), if we have a prime  $q \in \mathcal{D} \cup \Phi$, then $q\in N_\downarrow^1(s)$.  Applying Lemma \ref{theOs}(b), we see that $J = \mathbf{O}_s (H_{sq})$, where $H_{sq}$ is a Hall $\{ s, q \}$-subgroup containing $J$.
In particular, some Sylow $q$-subgroup normalizes $\mathbf{O}_s(G)$.

If we have a prime $p \in \Pi$, then since $s \in \mathcal{O}_2$, we know that $p \in N_\downarrow^2(s)$.  Thus there exists a prime $q\in \mathcal{D}$ so that $s \rightarrow q \rightarrow p$ in $\FG$.   $K$ be a Hall $\{ p, q, s\}$-subgroup that contains $H_{sq}$.  Lemma \ref{theOs}(a) shows us that $J = \mathbf{O}_s(K)$.  We have that $J$ is normalized by the Sylow $p$-subgroup $P$ of $G$, which we recall is normal in $G$, and is thus centralized by $J$.  Since $K$ is a $2$-Frobenius group, we see that $C_K (P) = P \times J$.  In particular, we deduce that $J$ is a Sylow $s$-subgroup of $C_K (P)$.  Because this is true for every prime $p \in \Pi$, we conclude that $J$ is a Sylow $s$-subgroup of $C_G (H_\Pi)$.

If $t \in \mathcal{O}$ satisfies $N_\downarrow^1(s) \cap N_\downarrow^1(t) \ne \emptyset$, then we may apply Lemma \ref{theOs}(c) to see that some Sylow $t$-subgroup normalizes $J$.  By Lemma \ref{LewisLemma456}(c), this occurs for all the primes in $\mathcal{O}_2^\star$.  If $\Phi \ne \emptyset$, then this holds for all primes $t\in \mathcal{O}$ by Lemma \ref{LewisLemma456}(b).  Notice that this implies that the normalizer of $J$ contains Sylow subgroups for every prime dividing $|G|$, and so we have that $J$ is normal in $G$.  Thus, we may assume that $\Phi$ is empty.

We have shown that $J \leq C_G(H_\Pi)$.  If $J$ is normal in $C_G(H_\Pi)$, then $J$ is characteristic in $C_G(H_\Pi)$ since $J$ is a Sylow $s$-subgroup of $C_G (H_\Pi)$.  Because $H_\Pi$ is normal in $G$, it follows that $C_G(H_\Pi)$ is normal in $G$, and hence, $J$ is normal in $G$.  Thus it remains to be proven that $J \lhd C_G(H_\Pi)$.  We now show that every prime dividing $|C_G(H_\Pi)|$ is contained in $\Pi \cup \mathcal{O}_2$.  If $t \in \mathcal{O}_1 \cup \mathcal{D}$, then there is a prime $p\in \Pi$ so that $p\in N_\downarrow^1(t)$.  By Lemma \ref{2path} and Corollary \ref{3Pcor}, we know that $H_{pt}$ is a Frobenius group, so $t$ does not divide $|C_G (P)|$.  Since $C_G(H_\Pi) \leqslant C_G (P)$, it follows that $t$ does not divide $|C_G (H_\Pi)|$.  Since $\Phi$ is empty, we can conclude that the only primes that divide $|C_G (H_\Pi)|$ lie in $\Pi\cup \mathcal{O}_2$.  We have seen that the normalizer of $J$ contains a Sylow subgroup of $G$ for each prime in $\Pi \cup \mathcal{O}_2$, and thus contains a Sylow subgroup of $C_G(H_\Pi)$ for every prime divisor of $|C_G(H_\Pi)|$.  Thus $C_G(H_\Pi)$ normalizes $J$.  This proves the theorem.
\end{proof}

We now demonstrate that the upper bound in Theorem \ref{Fit4} is in fact best possible by explicitly constructing a group of Fitting length $4$ with a minimal prime graph.  We will see that the Frobenius digraph of the resulting group may be obtained by linked vertex duplication of the $5$-cycle.  Let $K = C_{11} \rtimes C_5$ be a Frobenius group and take $L = K \times 2O$, where $2O$ is the binary octahedral group.  Let $V_1$ be an absolutely irreducible $\mathbb{F}_{23} [L]$-module so that the fixed point space of the restriction of the module action to $C_{11} \times 2O$ is trivial.  The smallest such $V_1$ has dimension $10$.  Let $V_2$ be an absolutely irreducible $\mathbb{F}_{31}[L]$-module so that $C_5 \times 2O$ acts fixed point freely on $V_2$.  The smallest such $V_2$ has dimension $2$.  We have computationally verified that $\left( V_1 \times V_2 \right) \rtimes L$ has Fitting length $4$ and Frobenius digraph \begin{center}
\begin{tikzpicture}
    \node (p1) at ( 1.275, 1.5) {$2$}; 
    \node (p2) at ( 2.025, 1.5) {$3$};
    \node (p3) at ( 0,1.5) {$5$};
    \node (p4) at ( 0,0) {$11$};
    \node (p5) at ( 0,-1.5) {$23$};
    \node (p6) at ( 1.65,-1.5) {$31$};
    \begin{scope}[every path/.style={->}]
       \draw (p1) -- (p5);
       \draw (p2) -- (p5); 
       \draw (p1) -- (p6);
       \draw (p2) -- (p6);
       \draw (p3) -- (p4);
       \draw (p3) -- (p6);
       \draw (p4) -- (p5);       
    \end{scope}  
\end{tikzpicture}
\end{center}
which is minimal.

\section{Acknowledgements}

Much of this research was conducted under NSF-REU grant DMS $1005206$ by the first, second, fourth, and fifth authors during the Summer of 2012 under the supervision of the second author.  These authors graciously acknowledge the financial support of NSF, as well as the hospitality of Texas State University.  In particular, Dr. Jian Shen, the director of the REU program, is recognized for conducting an inspired and successful research program.  Also, these authors would like to thank Daniel Lenders for helpful discussion of the topic.  The third author learned about this work when visiting the second author for one week in February/March $2013$ and then was able to improve the upper bound on the Fitting length in Section $2$ from $5$ to $4$.  He would like to thank Texas State University for its hospitality and support.


\begin{thebibliography}{50}

	\bibitem{Mycielski}
	Chv\'{a}tal, Va\v{s}ek.
	\emph{The minimality of the Mycielski graph.}
	Graphs and combinatorics (Proc. Capital Conf., George Washington Univ., Washington, D.C., 1973), pp. 243–246. Lecture Notes in Math., Vol. 406, Springer, Berlin, 1974.

  \bibitem{Hall}
  Hall, Philip
  \emph{A note on soluble groups}.
  J. London Math. Soc. 3, 98-105 (1928).

  \bibitem{HuppertCharacters}
  Huppert, Bertram
  \emph{Character Theory of Finite Groups}.
  Berlin: Walter De Gruyter.  1998.

  \bibitem{HuppertI}
  Huppert, Bertram
  \emph{Endliche Gruppen I}.
  Sperner-Verlag.  Berlin/Heidelberg/New York, 1967.

  \bibitem{Isaacs}
  Isaacs, I. Martin
  \emph{Finite Group Theory}.
  Providence, RI: American Mathematical Society, 2008.

  \bibitem{Keller1}
  Keller, Thomas Michael
  \emph{Solvable groups with a small number of prime divisors in the element orders}.
  J. Algebra 170 (1994), no. 2, 625--648.

  \bibitem{Keller3}
  Keller, Thomas Michael
  \emph{A linear bound for $\rho(n)$}.
  J. Algebra 178 (1995), no. 2, $643$--$652$

  \bibitem{Kondratev}
  Kondrat\'{e}v, A.S.
  \emph{Prime Graph Components of Finite Simple Groups.}
  Mat. Sb., 180. n.6 (1969), $787$-$797$.

  \bibitem{Brooks}
  Lov\'{a}sz, L.
  \emph{Three short proofs in graph theory}.
  Journal of Combinatorial Theory, Series B 19(1975).  $269$-$271$.

  \bibitem{LucidoTree}
  Lucido, Maria Silvia
  \emph{Groups in which the prime graph is a tree}.
  Boll. Unione Mat. Ital. Sez. B Artic. Ric. Mat. (8) 5 (2002), no. 1, $131$-$148$.

  \bibitem{LucidoDiameter}
  Lucido, Maria Silvia
  \emph{The diameter of a prime graph of a finite group}.
  J. Group Theory 2 (1999), no. 2, $157$$-$$172$.

  \bibitem{Moghaddamfar}
  Moghaddamfar, Ali Reza and Zokayi, A.R.
  \emph{Recognizing Finite Groups Through Order and Degree Pattern.}
  Algebra Colloquium 15:3 (2008), $449$-$456$.

  \bibitem{Gallai}
  Chartrand, Gary.
  Chromatic Graph Theory,
  Boca Raton: Chapman \& Hall/CRC, $2009$.

  \bibitem{Vasilev}
  Vasil\'{e}v, Andrei V. and Vdovin, Evgeny P.
  \emph{An adjacency criterion for the prime graph of a finite simple group.}
  Institute of Mathematics, published in "Algebra and Logic", v. 44 (2005), n. 6, $381$-$406$.

  \bibitem{Williams}
  Williams, J.S
  \emph{Prime Graph Components of Finite Groups}.
  J. Algebra 69 (1981), $487$-$513$.


\end{thebibliography}
\end{document}